\DeclareMathOperator*{\wlim}{w-lim}
\numberwithin{equation}{section}
\theoremstyle{plain}
\newtheorem{thm}{Theorem}[section]
\newtheorem{cor}[thm]{Corollary}
\newtheorem{lem}[thm]{Lemma}
\newtheorem{proposition}[thm]{Proposition}
\theoremstyle{definition}
\newtheorem{defn}[thm]{Definition}
\theoremstyle{remark}
\newtheorem{rmk}[thm]{Remark}
\newtheorem{example}[thm]{Example}
\crefname{thm}{Theorem}{Theorems}
\crefname{def}{Definition}{Definitions}
\crefname{rmk}{Remark}{Remarks}
\crefname{lem}{Lemma}{Lemmas}
\Crefname{proposition}{Proposition}{Propositions}
\crefname{cor}{Corollary}{Corollaries}
\renewcommand{\theenumi}{\roman{enumi}}
\renewcommand{\tilde}{\widetilde}
\renewcommand{\emptyset}{\varnothing}
\renewcommand{\rho}{\varrho}
\newcommand{\defeq}{\vcentcolon=}
\newcommand{\eqdef}{=\vcentcolon}
\newcommand{\om}{\omega}
\newcommand{\ee}{\mathrm{e}}
\newcommand{\e}{\varepsilon}
\newcommand{\vp}{\vphantom{\hat{Z}}}
\newcommand{\mdim}{\delta}
\newcommand{\leb}{\lambda}
\newcommand{\bij}{\pi }
\newcommand{\bd}{\rho}
\newcommand{\bdneu}{p}
\newcommand{\card}{\#}
\newcommand{\eigenf}{h}
\newcommand{\eigenv}{\gamma}
\newcommand{\id}{\textup{id}}
\newcommand{\omneu}{u}
\newcommand{\Fun}{F^{\text{unique}}}
\newcommand{\Q}{Q}
\newcommand{\main}{main}
\newcommand{\z}{\zeta}
\newcommand{\aaa}{a}
\newcommand{\bb}{b\,}
\newcommand{\bbb}{b}
\newcommand{\Lomi}{\lvert L_{\om}^i\rvert}
\newcommand{\entro}{H}
\begin{document}

\title[Fractal curvature measures and Minkowski content]{Fractal curvature measures and Minkowski content for one-dimensional self-conformal sets}
\keywords{fractal curvature measures, Minkowski content, conformal iterated function system, self-conformal set}
\subjclass[2000]{Primary 28A80; Secondary 28A75, 60K05}

\author{Marc Kesseböhmer}\address[Marc Kesseböhmer]{Universität Bremen, Bibliothekstra{\ss}e 1, 28395 Bremen, Germany} \email{mhk@math.uni-bremen.de}    
\author{Sabrina Kombrink}\address[Sabrina Kombrink]{Universität Bremen, Bibliothekstra{\ss}e 1, 28395 Bremen, Germany} \email{kombrink@math.uni-bremen.de}
\date{\today}

\begin{abstract}
  We show that the fractal curvature measures of invariant sets of one-dimensional conformal iterated function systems satisfying the open set condition exist, if and only if the associated geometric potential function is nonlattice. Moreover, in the nonlattice situation we obtain that the Minkowski content exists and prove that the fractal curvature measures are constant multiples of the $\mdim$-conformal measure, where $\mdim$ denotes the Minkowski dimension of the invariant set. For the first fractal curvature measure, this constant factor coincides with the Minkowski content of the invariant set. 
  In the lattice situation we give sufficient conditions for the Minkowski content of the invariant set to exist, contrasting the fact that the Minkowski content of a self-similar lattice fractal never exists. However, every self-similar set satisfying the open set condition exhibits a Minkowski measurable $\mathcal{C}^{1+\alpha}$ diffeomorphic image.
	Both in the lattice and nonlattice situation average versions of the fractal curvature measures are shown to always exist.
  
\end{abstract}
\maketitle

\section{Brief Introduction}
	Notions of curvature are an important tool to describe the geometric structure of sets and have been introduced and intensively studied for broad classes of sets. 
Originally, the idea to characterise sets in terms of their curvature stems from the study of smooth manifolds as well as from the theory of convex bodies with sufficiently smooth boundaries. In his fundamental paper \textit{Curvature Measures} \cite{Federercurvature}, Federer localises, extends and unifies the previously existing notions of curvature to sets of positive reach. This is where he introduces curvature measures, which can be viewed as a measure theoretical substitute for the notion of curvature for sets without a differentiable structure.
Federer's curvature measures were studied and generalised in various ways. An extension to finite unions of convex bodies is given in \cite{Groemer} and \cite{Schneider_polyconvex} and to finite unions of sets with positive reach in \cite{Zaehle_posreach}. 
 In  \cite{Winter_thesis}, Winter extends the curvature measures to fractal sets in $\mathbb{R}^{d}$, which typically cannot be expressed as finite unions of sets with positive reach. These measures are referred to as fractal curvature measures and are defined as weak limits of rescaled versions of the curvature measures introduced by Federer, Groemer and Schneider.  Winter also examines conditions for their existence in the self-similar case.
 However, fractal sets arising in geometry (for instance as limit sets of Fuchsian groups) or in number theory (for instance as sets defined by Diophantine inequalities) are typically non self-similar but rather self-conformal. 
 In order to have a notion of curvature at hand also for this important class of fractal sets, in this paper, we study nonempty compact sets which occur as the invariant sets of finite conformal iterated function systems satisfying the open set condition and call them self-conformal sets (see \cref{def:selfconformal}). We obtain necessary and sufficient conditions for the fractal curvature measures to exist for these kind of sets and establish links to the Minkowski content.

The Minkowski content was proposed in \cite{Mandelbrot_lacunarity} as a measure of ``lacunarity'' for a fractal set. Indeed, the value of the Minkowski content allows to compare the lacunarity of sets of the same Minkowski dimension.
Minkowski measurability has moreover attracted prominence in work related to the Weyl-Berry conjecture on the distribution of eigenvalues of the Laplacian on domains with fractal boundaries. We refer to Section 4 of \cite{Falconer_Minkowski} for an overview and references concerning these studies.
An additional motivation for studying the Minkowski content of fractal sets arises from noncommutative geometry. 
In Connes' seminal book \cite{Connes_seminal} the notion of a noncommutative fractal geometry is developed. There it is shown that the natural analogue of the volume of a compact smooth Riemannian spin manifold for a fractal set in $\mathbb R$ is that of the Minkowski content. This idea is also reflected in the works \cite{Guido_Isola}, \cite{Samuel} and \cite{FalconerSamuel}.  

The paper is organised as follows. 
In Section \ref{sec:results} we state the main results and provide in this way a complete answer to the question on the existence of the fractal curvature measures for self-conformal sets. The precise definitions and background information as well as the relevant properties and auxiliary results will be presented in Section \ref{sec:setting}. In Section \ref{sec:proofs} the proofs of our main theorems for self-conformal sets (\cref{curvatureresult,conformalMinkowski}) are provided. Finally, in Section \ref{sec:specialcases}, we conclude the paper by considering the special cases of self-similar sets and $\mathcal C^{1+\alpha}$ diffeomorphic images of self-similar sets and prove \cref{similars,corimage,corminim}.

\section{Main Results}\label{sec:results}

The introduction of the fractal curvature measures (see Section \ref{sec:fcm}) relies on the definition of scaling exponents, for which we require the following notation. Let $\leb^0$ and $\leb^1$ respectively denote the zero- and one-dimensional Lebesgue measure. For $\e>0$ we define $Y_{\e}\defeq\{x\in\mathbb R\mid\inf_{y\in Y}\lvert x-y\rvert\leq\e\}$ to be the \emph{$\e$-parallel neighbourhood} of $Y\subset\mathbb R$ and let $\partial Y$ denote the boundary of $Y$.
\begin{defn}\label{scalingexp}
	For a nonempty compact set $Y\subset\mathbb R$ the \emph{0-th} and \emph{1-st curvature scaling exponents} of $Y$ are respectively defined as 
	\begin{eqnarray*}
		s_0(Y)&\defeq& \inf\{t\in\mathbb R\mid \e^t\leb^0(\partial Y_{\e})\to 0\ \text{as}\ \e\to 0\}\quad\text{and}\\
		s_1(Y)&\defeq& \inf\{t\in\mathbb R\mid \e^t\leb^1(Y_{\e})\to 0\ \text{as}\ \e\to 0\}.
	\end{eqnarray*}
\end{defn}

\begin{defn}\label{defn:curvaturemeasures}
	Let $Y\subset\mathbb R$ denote a nonempty compact set. Provided, the weak limit of finite Borel measures
	\[
		C^f_0(Y,\cdot)\defeq\wlim_{\e\to 0} \e^{s_0(Y)}\leb^0(\partial Y_{\e}\cap\cdot)/2
	\]
	exists, we call it the \emph{0-th fractal curvature measure} of $Y$. Likewise the weak limit
	\[
		C_1^f(Y,\cdot)\defeq\wlim_{\e\to 0}\e^{s_1(Y)}\leb^1(Y_{\e}\cap\cdot)
	\]
	is called the \emph{1-st fractal curvature measure}, if it exists. Moreover, for a Borel set $B\subseteq\mathbb R$ we set 
	\[
	\begin{array}{ll}
		\overline{C}_0^f(Y,B)\defeq\displaystyle{\limsup_{\e\to 0}\e^{s_0(Y)}\leb^0(\partial Y_{\e}\cap B)/2},&
		\overline{C}_1^f(Y,B)\defeq\displaystyle{\limsup_{\e\to 0}\e^{s_1(Y)}\leb^1(Y_{\e}\cap B)},\\
		\underline{C}_0^f(Y,B)\defeq\displaystyle{\liminf_{\e\to 0}\e^{s_0(Y)}\leb^0(\partial Y_{\e}\cap B)/2},&
		\underline{C}_1^f(Y,B)\defeq\displaystyle{\liminf_{\e\to 0}\e^{s_1(Y)}\leb^1(Y_{\e}\cap B)}.
	\end{array}
	\]
\end{defn}

The central question arising in this context is to identify those sets $Y\subset \mathbb{R}$ for which the fractal curvature measures exist. In \cite{Winter_thesis} it has been shown that the fractal curvature measures exist for self-similar sets with positive Lebesgue measure as well as for self-similar sets which are nonlattice (see \cref{def:nonlatticefractal}) and satisfy the open set condition (see Section \ref{sec:selfconfshift}).
In the lattice case, Winter shows that average versions of the fractal curvature measures exist, which  are defined as follows.
\begin{defn}\label{def:averagefc}
	Let $Y\subset\mathbb R$ denote a nonempty compact set. Provided the weak limit of finite Borel measures exists, we let
	\[
		\widetilde{C}_0^f(Y,\cdot)\defeq\wlim_{T\searrow 0}\lvert\ln T\rvert^{-1}\int_T^1\e^{s_0(Y)-1}\leb^0(\partial Y_{\e}\cap\cdot)\textup{d}\e/2
	\] 
	denote the \emph{0-th average fractal curvature measure} of $Y$ and let the weak limit 
	\[
		\widetilde{C}_1^f(Y,\cdot)\defeq\wlim_{T\searrow 0} \lvert\ln T\rvert^{-1}\int_T^1\e^{s_1(Y)-1}\leb^1(Y_{\e}\cap\cdot)\textup{d}\e	
	\]
	likewise denote the \emph{1-st average fractal curvature measure} of $Y$.
\end{defn}

Note that the definition of the 1-st curvature scaling exponent resembles the definition of the Minkowski dimension, which is also known as the box counting dimension (see Claim 3.1 in \cite{Falconer_Foundation}), and defined as follows.

\begin{defn}
	For a nonempty compact set $Y\subset\mathbb R$ the \emph{upper} and \emph{lower Minkowski dimension} is respectively defined as 
	\begin{eqnarray*}
		\overline{\textup{dim}}_{M}(Y)&\defeq& 1-\liminf_{\e\searrow 0}\frac{\ln\leb^1(Y_{\e})}{\ln\e}\qquad\text{and}\\
		\underline{\textup{dim}}_{M}(Y)&\defeq& 1-\limsup_{\e\searrow 0}\frac{\ln\leb^1(Y_{\e})}{\ln\e}.
	\end{eqnarray*}
	In case the upper and lower Minkowski dimensions coincide, we call the common value the \emph{Minkowski dimension} of $Y$ and denote it by $\textup{dim}_M(Y)\eqdef\mdim$.
\end{defn}

In what follows, we provide a complete characterisation of self-conformal sets for which the (average) fractal curvature measures exist.
By a self-conformal set we mean a set which arises as the invariant set of a finite iterated function system which consists of $\mathcal C^{1+\alpha}$ maps (see \cref{def:selfconformal}). For such sets it is well-known that the Minkowski dimension exists (see \cref{thBedford}).
As we will see, a self-conformal set is either a nonempty compact interval or has zero one-dimensional Lebesgue measure (\Cref{intervalfractal}). In order to determine the fractal curvature scaling exponents we have to distinguish these two cases. 

\begin{proposition}\label{prop:interval}
	Let $\mdim$ denote the Minkowski dimension of a self-conformal set $F$. If $\leb^1(F)=0$, then $s_0(F)=\mdim$ and $s_1(F)=\mdim-1$. If $F$ is a nonempty compact interval, then $s_0(F)=s_1(F)=0$.
\end{proposition}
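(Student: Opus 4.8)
The plan is to reduce both scaling exponents to $\limsup$'s of logarithmic ratios and then to extract them from the elementary geometry of the parallel neighbourhood. First I would record that for any eventually positive $\phi$ one has $\inf\{t\in\mathbb R\mid \e^t\phi(\e)\to 0\}=\limsup_{\e\to 0}\ln\phi(\e)/\ln(1/\e)$. Next I would use that for a nonempty compact $F\subset\mathbb R$ the neighbourhood $F_\e$ is a finite disjoint union of $N(\e)$ closed intervals, so $\leb^0(\partial F_\e)=2N(\e)$; moreover two consecutive pieces of $F$ get merged in $F_\e$ exactly when the gap between them has length at most $2\e$, whence $N(\e)=1+\card\{\text{gaps of }F\text{ longer than }2\e\}$. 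Applied to $\phi=\leb^1(F_\e)$ and $\phi=\leb^0(\partial F_\e)$ this yields $s_1(F)=\limsup_{\e\to0}\ln\leb^1(F_\e)/\ln(1/\e)$ and $s_0(F)=\limsup_{\e\to0}\ln N(\e)/\ln(1/\e)$.

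The interval case is then immediate: if $F$ is a nonempty compact interval (necessarily of positive length here), $F_\e$ is a single interval, so $\leb^0(\partial F_\e)=2$ and $\leb^1(F_\e)=\leb^1(F)+2\e\to\leb^1(F)>0$; both ratios tend to $0$, giving $s_0(F)=s_1(F)=0$. In the case $\leb^1(F)=0$ the exponent $s_1$ is just as quick: comparing $s_1(F)=\limsup_{\e\to0}\ln\leb^1(F_\e)/\ln(1/\e)$ with the definition of the upper Minkowski dimension identifies it with $\overline{\dim}_M(F)-1$, and since the Minkowski dimension exists (\cref{thBedford}) this equals $\mdim-1$.

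It remains to show $s_0(F)=\limsup_{\e\to0}\ln N(\e)/\ln(1/\e)=\mdim$, and the engine is the identity
\[
  \leb^1(F_\e)=2\e+\int_0^{2\e}G(x)\,\textup{d}x,\qquad G(x)\defeq\card\{\text{gaps longer than }x\},
\]
which I would obtain by subtracting from the circumscribing interval the uncovered portions $(g_i-2\e)^+$ of the long gaps and rewriting the sum by the layer-cake formula (using $\int_0^\infty G=\max F-\min F$, valid because $\leb^1(F)=0$). Writing $\alpha\defeq\limsup_{x\to0}\ln G(x)/\ln(1/x)$, which coincides with $\limsup_{\e\to0}\ln N(\e)/\ln(1/\e)$, I would prove $\alpha=\overline{\dim}_M(F)$ in two steps. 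Since $G$ is non-increasing, $\int_0^{2\e}G\ge\e\,G(2\e)$, so $G(2\e)\le\leb^1(F_\e)/\e$ and hence $\alpha\le\overline{\dim}_M(F)$. Conversely, for every $\eta>0$ one has $G(x)\le x^{-(\alpha+\eta)}$ for small $x$; if $\alpha<1$, choosing $\eta$ with $\alpha+\eta<1$ and integrating gives $\leb^1(F_\e)\le C\e^{1-\alpha-\eta}$, whence $\overline{\dim}_M(F)\le\alpha+\eta$, and letting $\eta\downarrow 0$ yields $\overline{\dim}_M(F)\le\alpha$ (the borderline $\alpha=1$ being covered by the first step together with $\overline{\dim}_M(F)\le 1$). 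Thus $s_0(F)=\alpha=\overline{\dim}_M(F)=\mdim$.

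The main obstacle is the two-sided comparison between the gap-counting growth $N(\e)$ and the volume $\leb^1(F_\e)$ in this last step, which is a Besicovitch--Taylor type equivalence; the delicate point is the lower estimate, where integrability of $x^{-(\alpha+\eta)}$ forces one to watch the borderline exponents $\alpha\in\{0,1\}$, and where phrasing everything through $\limsup$ rather than attempting a pointwise asymptotic for $G$ is what sidesteps a genuine Tauberian inversion.
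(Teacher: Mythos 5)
Your argument is correct, and it is genuinely different from how the paper handles this statement: the paper gives no standalone proof of \cref{prop:interval} at all. The interval case is the trivial computation behind \cref{prop:curvaturetrivial} (which you reproduce), and in the case $\leb^1(F)=0$ the identities $s_0(F)=\mdim$ and $s_1(F)=\mdim-1$ are extracted as a byproduct of the renewal-theoretic machinery of Section 4: the boundedness-and-positivity part of Part (iii) of \cref{curvatureresult} shows via Lalley's renewal theorems that $\e^{\mdim}\leb^0(\partial F_{\e})$ stays bounded above and below away from zero, which pins down $s_0(F)$, and the Rataj--Winter comparison (\cref{lemWinterRataj}) then transfers this to $s_1(F)$ --- a reading the authors make explicit in the remark following \cref{curvatureresult} on the alternative characterisation of the scaling exponents via $\e^t\leb^0(\partial F_{\e})\to\infty$. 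Your route instead runs entirely through the elementary Besicovitch--Taylor identity $\leb^1(F_{\e})=2\e+\int_0^{2\e}G(x)\,\textup{d}x$ for the gap-counting function $G$, together with the observation that $\inf\{t\mid\e^t\phi(\e)\to 0\}=\limsup_{\e\to 0}\ln\phi(\e)/\ln(1/\e)$, which turns $s_1(F)=\overline{\textup{dim}}_M(F)-1$ into a matter of unwinding definitions and reduces $s_0(F)=\mdim$ to the two-sided comparison $\alpha=\overline{\textup{dim}}_M(F)$ between the growth exponent of $G$ and the volume exponent. All the steps check out: the monotonicity bound $\e G(2\e)\le\int_0^{2\e}G$ gives one inequality, the integration of $G(x)\le x^{-(\alpha+\eta)}$ gives the other when $\alpha+\eta<1$, and the case $\alpha=1$ is indeed absorbed by $\overline{\textup{dim}}_M(F)\le 1$; the $\ge 2\e$ versus $>2\e$ convention for surviving gaps is immaterial to the $\limsup$'s. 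What your approach buys is a self-contained, elementary proof valid for an arbitrary compact null subset of $\mathbb R$ whose Minkowski dimension exists, independent of self-conformality, the open set condition, or \cref{thBedford} beyond the existence of $\mdim$; what the paper's implicit route buys is that the exponents come for free from estimates it must establish anyway, together with the strictly finer information (convergence in the nonlattice case, bounded oscillation with positive lower bound in the lattice case) that a pure $\limsup$ computation cannot deliver.
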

Let us first consider the latter situation of the above proposition. 
As an immediate consequence of \Cref{prop:interval} we obtain the following complete description.
\begin{cor}\label{prop:curvaturetrivial}
	If $Y\subset\mathbb R$ is a nonempty compact interval, then both the 0-th and 1-st fractal curvature measures exist and satisfy
	\begin{eqnarray*}
		C^f_0(Y,\cdot)=\leb^0(\partial Y\cap\cdot)/2\quad \text{and}
		\quad C^f_1(Y,\cdot)=\leb^1(Y\cap\cdot).
	\end{eqnarray*}
\end{cor}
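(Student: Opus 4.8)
The plan is to derive \Cref{prop:curvaturetrivial} directly from \Cref{prop:interval} by computing the relevant parallel-neighbourhood quantities explicitly, using the fact that the scaling exponents vanish in the interval case. Since $Y=[a,b]$ is a nonempty compact interval with $s_0(Y)=s_1(Y)=0$, the prefactors $\e^{s_0(Y)}$ and $\e^{s_1(Y)}$ both equal $1$ for every $\e>0$, so the rescaling that is usually needed in the fractal setting becomes trivial. The whole proof thus reduces to identifying the weak limits of the two families of measures $\leb^0(\partial Y_{\e}\cap\cdot)/2$ and $\leb^1(Y_{\e}\cap\cdot)$ as $\e\to 0$.

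For the $1$-st measure I would argue as follows. The $\e$-parallel neighbourhood of $Y=[a,b]$ is the closed interval $[a-\e,b+\e]$, so $\leb^1(Y_{\e}\cap B)$ is the Lebesgue measure of $B\cap[a-\e,b+\e]$. As $\e\to 0$ these sets decrease to $B\cap[a,b]$ (up to the two endpoints, which carry no Lebesgue mass), and one checks by monotone/dominated convergence that $\leb^1(Y_{\e}\cap\cdot)$ converges weakly to $\leb^1(Y\cap\cdot)$. In fact for a fixed bounded Borel set the difference $\leb^1(Y_{\e}\cap B)-\leb^1(Y\cap B)$ is at most $2\e$, which gives convergence uniformly in $B$ and in particular weak convergence; this identifies $C_1^f(Y,\cdot)=\leb^1(Y\cap\cdot)$.

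For the $0$-th measure the key observation is that for a nondegenerate interval $Y=[a,b]$ and all sufficiently small $\e>0$ (namely $\e<(b-a)/2$), the parallel neighbourhood $[a-\e,b+\e]$ is again a nondegenerate closed interval, so its boundary $\partial Y_{\e}=\{a-\e,\,b+\e\}$ consists of exactly two points. Hence $\leb^0(\partial Y_{\e}\cap\cdot)/2$ is the measure $\tfrac12(\delta_{a-\e}+\delta_{b+\e})$. As $\e\searrow 0$ these point masses move to $a$ and $b$ respectively, so for any bounded continuous test function $g$ one has $\tfrac12\bigl(g(a-\e)+g(b+\e)\bigr)\to\tfrac12\bigl(g(a)+g(b)\bigr)$, which is exactly integration against $\tfrac12(\delta_a+\delta_b)=\leb^0(\partial Y\cap\cdot)/2$. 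This yields $C_0^f(Y,\cdot)=\leb^0(\partial Y\cap\cdot)/2$.

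I do not expect any genuine obstacle here, since the statement is an immediate corollary of \Cref{prop:interval} once the prefactors are recognised as constant. The only point requiring a little care is the degenerate case: strictly speaking the definition allows $Y$ to be a single point, in which case $\partial Y_\e$ is still two points but $\partial Y$ and $\leb^1(Y)$ behave slightly differently, so I would either note that the interval is implicitly assumed nondegenerate or observe that the single-point case already falls under the $\leb^1(F)=0$ branch of \Cref{prop:interval} rather than the interval branch. Beyond that, the verification of weak convergence of the shifted Dirac masses and of the shrinking parallel intervals is entirely routine and can be carried out as sketched above.
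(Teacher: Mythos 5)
Your proposal is correct and matches the paper's treatment: the paper states this corollary as an immediate consequence of \Cref{prop:interval} without further proof, and your explicit identification of $\partial Y_{\e}$ as the two shifted endpoints and of $Y_\e$ as the slightly enlarged interval, followed by the routine weak-limit computation against continuous test functions, is exactly the intended verification. Your side remark on the degenerate single-point case is a sensible caveat (there $s_1=-1$ and the stated formula would fail), consistent with the paper implicitly treating the interval as nondegenerate.
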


Let us now focus on self-conformal sets with zero one-dimensional Lebesgue measure. 
Here, as the total mass of the (average) fractal curvature measure, the (average) Minkowski content appears. This is defined as follows.

\begin{defn}\label{defn:Mcontent}
	Let $Y\subset\mathbb R$ denote a set for which the Minkowski dimension $\mdim$ exists. 
	The \emph{upper Minkowski content} $\overline{\mathcal M}(Y)$ and the \emph{lower Minkowski content} $\underline{\mathcal M}(Y)$ of $Y$ are respectively defined as 
	\begin{eqnarray*}
		\overline{\mathcal M}(Y)
		\defeq\limsup_{\e\to 0}\e^{\mdim-1}\leb^1(Y_{\e})\quad\text{and}\quad
		\underline{\mathcal M}(Y)
		\defeq\liminf_{\e\to 0}\e^{\mdim-1}\leb^1(Y_{\e}).
	\end{eqnarray*}
	If the upper and lower Minkowski contents coincide, we denote the common value by $\mathcal M(Y)$ and call it the \emph{Minkowski content} of $Y$. In case the Minkowski content exists, we call $Y$ \emph{Minkowski measurable}. The \emph{average Minkowski content} of $Y$ is defined as the following limit if it exists
	\[
		\widetilde{\mathcal M}(Y)\defeq\lim_{T\searrow 0}\lvert\ln T\rvert^{-1}\int_T^1\e^{\mdim-2}\leb^1(Y_{\e})\textup{d}\e.
	\]
\end{defn}

For the following fix an iterated function system $\Phi\defeq\{\phi_1,\ldots,\phi_N\}$,  $N\geq2$, acting on a nonempty compact connected set $X\subset\mathbb R$ which satisfies the open set condition (see Section \ref{sec:selfconfshift}) with feasible open set $\textup{int} X$, where $\textup{int}X$ shall denote the interior of $X$. Let $F$ denote the unique nonempty compact invariant set of $\Phi$ and call it self-conformal set associated with $\Phi$. For  $\Sigma\defeq\{1,\ldots,N\}$  let $(\Sigma^{\infty},\sigma)$ denote the full shift-space on $N$ symbols and let $\bij\colon\Sigma^{\infty}\to F$ be the natural code map as defined in Section \ref{sec:selfconfshift}.
It turns out that the fractal curvature measures of $F$ exist, if and only if the \emph{geometric potential function} $\xi\colon\Sigma^{\infty}\to\mathbb R$ given by $\xi(\om)\defeq -\ln\lvert\phi'_{\om_1}(\bij(\sigma\om))\rvert$ for $\om\defeq\om_1\om_2\cdots\in\Sigma^{\infty}$, is nonlattice (see \cref{def:nonlattice}). In this case we call $\Phi$ (resp. $F$) nonlattice, otherwise $\Phi$ (resp. $F$) is called lattice (see \cref{def:nonlatticefractal}).

By applying $\Phi$ to the convex hull of $F$ one obtains a family of $Q-1$ gap intervals  $L^1,\ldots,L^{Q-1}$, which we call the \emph{primary gaps} of $F$, where we have $2\leq Q\leq N$ since $\leb^1(F)=0$. Given an $n\in\mathbb N$ and an $\om\defeq\om_1\cdots\om_n\in\Sigma^n$, let $L^1_{\om},\ldots,L^{Q-1}_{\om}$ respectively denote the images of the primary gaps under the map $\phi_{\om}\defeq\phi_{\om_1}\circ\cdots\circ\phi_{\om_n}$ and call these sets the \emph{\main\ gaps} of $\phi_{\om}F$.\\ 
Further, letting $\mdim$ denote the Minkowski dimension of $F$, we call the unique probability measure $\nu$ supported on $F$, which satisfies
\begin{eqnarray}\label{conformalmeasure}
	\nu(\phi_i X\cap\phi_j X)=0\ \text{for}\ i\neq j\in\Sigma\quad\text{and}\quad	\nu(\phi_{i}B)=\int_B \lvert\phi'_{i}\rvert^{\mdim}\textup{d}\nu
\end{eqnarray}
for all $i\in\{1,\ldots,N\}$ and for all Borel sets $B\subseteq X$ the \emph{$\mdim$-conformal measure} associated with $\Phi$. The statement on the uniqueness and existence is shown in \cite{MauldinUrbanski} and goes back to the work of \cite{Patterson}, \cite{Sullivan}, and \cite{DenkerUrbanski}.\\
Finally, let $\entro_{\mu_{-\mdim\xi}}$ denote the measure theoretical entropy of the shift-map with respect to the unique shift-invariant Gibbs measure for the potential function $-\mdim\xi$ (see (\ref{entropy}) in Section \ref{sec:PF}).

The complete answer to the question concerning the existence of the (average) fractal curvature measures for self-conformal sets is given in the following theorem which shows that the fractal curvature measures exist if and only if $\xi$ is nonlattice.

\begin{thm}[Self-Conformal Sets -- Fractal Curvature Measures]\label{curvatureresult}
	Let $F$ denote a self-conformal set associated with the iterated function system $\Phi$ acting on $X$. Assume that $\Phi$ satisfies the open set condition with feasible open set $\textup{int} X$ and that $\leb^1(F)=0$.
	Let $\mdim$ denote the Minkowski dimension of $F$ and let $\xi$ denote the geometric potential function associated with $\Phi$. Then the following hold.
	\begin{enumerate}
	\item\label{curvatureresult:average} The average fractal curvature measures always exist and are both constant multiples of the $\mdim$-conformal measure $\nu$ associated with $F$, that is
		\begin{eqnarray*}
		\hspace{2cm}\widetilde{C}_0^f(F,\cdot)=\frac{2^{-\mdim}c}{\entro_{\mu_{-\mdim\xi}}}\cdot\nu(\cdot)\quad\text{and}\quad
		\widetilde{C}_1^f(F,\cdot)=\frac{2^{1-\mdim}c}{(1-\mdim)\entro_{\mu_{-\mdim\xi}}}\cdot\nu(\cdot), 
		\end{eqnarray*}
		where the constant $c>0$ is given by the well-defined limit
	 	\begin{eqnarray}\label{constantthm}
	 		c\defeq \lim_{n\to\infty}\sum_{i=1}^{Q-1}\sum_{\om\in\Sigma^n}\Lomi^{\mdim}.
	 	\end{eqnarray}
	 \item\label{curvatureresult:nonlattice} If $\Phi$ is nonlattice, then both the 0-th and 1-st fractal curvature measures exist and satisfy $C_k^f(F,\cdot)=\widetilde{C}_k^f(F,\cdot)$ for $k\in\{0,1\}$.
	\item\label{curvatureresult:lattice} If $\Phi$ is lattice, then neither the 0-th nor the 1-st fractal curvature measure exists. Nevertheless, there exists a constant $\overline{c}\in\mathbb R$ such that $\overline{C}^f_k(F,B)\leq \overline{c}$ for every Borel set $B\subseteq\mathbb R$ and $k\in\{0,1\}$. Additionally, $\underline{C}^f_k(F,\mathbb R)$ is positive for $k\in\{0,1\}$.
	\end{enumerate}
\end{thm}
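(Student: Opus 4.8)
The plan is to reduce all four quantities to the asymptotics of a single gap-counting function and then to feed this into a renewal theorem, whose nonlattice/lattice dichotomy produces the three assertions. First I would record the elementary gap decomposition of the parallel sets. Since $\leb^1(F)=0$, the complement of $F$ in its convex hull is the disjoint union of the main gaps $L^i_\om$ with $i\in\{1,\dots,\Q-1\}$ and $\om\in\bigcup_{n\geq 0}\Sigma^n$. Writing $N(\e)$ for the number of main gaps longer than $2\e$, a point of a gap lies in $F_\e$ precisely when it is within $\e$ of an endpoint, whence
\begin{align*}
\leb^0(\partial F_\e)/2 &= N(\e)+1,\\
\leb^1(F_\e) &= 2\e\,(N(\e)+1)+\sum_{|L^i_\om|\leq 2\e}|L^i_\om|.
\end{align*}
Localising to a Borel set restricts the sums to the gaps lying inside it. As $\nu$ is non-atomic, the finite unions of intervals with endpoints off $F$ form an algebra of $\nu$-continuity sets generating the Borel sets, and each such interval captures a finite union of cylinders $\phi_\om F$; weak convergence of the rescaled measures thus reduces to controlling the localised counts on these sets, so everything rests on the distribution of the lengths $|L^i_\om|$ and their positions.

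Second, I would transfer the problem to the shift space. The $\mathcal C^{1+\alpha}$ bounded distortion property gives $|L^i_\om|\asymp\ee^{-S_n\xi}\,|L^i|$ with multiplicative error tending to $1$ uniformly, where $S_n\xi$ is the $n$-th Birkhoff sum of the geometric potential along the cylinder $[\om]$; hence, setting $t\defeq-\ln(2\e)$, the condition $|L^i_\om|>2\e$ becomes $S_n\xi<t+\ln|L^i|$ up to a vanishing correction. The self-conformal structure --- the gaps inside $\phi_i F$ are the $\phi_i$-images of the gaps of $F$ --- then casts the localised count as a renewal sum in $t$ for the H\"older potential $\xi$, to which a renewal theorem of Lalley type applies, its spectral data being furnished by the shift-invariant Gibbs measure $\mu_{-\mdim\xi}$ and its Lyapunov exponent $\int\xi\,\textup{d}\mu_{-\mdim\xi}=\entro_{\mu_{-\mdim\xi}}/\mdim$ (the pressure of $-\mdim\xi$ vanishing at the dimension $\mdim$). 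This explains the appearance of $\entro_{\mu_{-\mdim\xi}}$.

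Third, I would invoke the renewal theorem. In the nonlattice case it produces a genuine limit $\e^{\mdim}(N(\e)+1)\to 2^{-\mdim}c/\entro_{\mu_{-\mdim\xi}}$; treating the residual sum $\sum_{|L^i_\om|\leq 2\e}|L^i_\om|$ analogously contributes the extra factor $\mdim/(1-\mdim)$ and yields the stated value for $\e^{\mdim-1}\leb^1(F_\e)$. Localising identifies the limiting measures as the claimed multiples of the $\mdim$-conformal measure $\nu$, since the length-weighted gap masses redistribute over cylinders exactly according to $\nu$ and their totals converge to $c$ by (\ref{constantthm}). This proves (ii), and because a family possessing a limit has the same logarithmic mean, it also gives (i) in the nonlattice case. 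In the lattice case the renewal theorem instead yields a continuous, multiplicatively periodic limit function $p$; its logarithmic mean still exists and equals the same constant, establishing (i) in general, while the oscillation of $p$ shows that the plain limits fail to exist, that $\overline C^f_k(F,B)\leq\max p\eqdef\overline c$ for all Borel $B$ and $k\in\{0,1\}$, and that $\underline C^f_k(F,\mathbb R)\geq\min p>0$, which is (iii).

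The main obstacle is twofold. Analytically, the conformal rather than self-similar setting forces one to control the distortion ratio $|L^i_\om|\,\ee^{S_n\xi}/|L^i|$ uniformly and to show it perturbs neither the leading asymptotics nor the weak limit; this is exactly where $\mathcal C^{1+\alpha}$ regularity and the Gibbs property of $\mu_{-\mdim\xi}$ are indispensable. Combinatorially, the lattice case requires proving that the periodic renewal limit $p$ is genuinely nonconstant --- so that the fractal curvature measures really do fail to exist --- while remaining bounded and strictly positive; verifying this nonconstancy, together with matching the residual-gap sum to the boundary count so that the constants $2^{-\mdim}$ and $2^{1-\mdim}/(1-\mdim)$ emerge correctly, is the delicate point.
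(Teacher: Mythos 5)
Your overall strategy---gap counting, bounded distortion to pass to Birkhoff sums of $\xi$, and Lalley's renewal theorem with its lattice/nonlattice dichotomy---is exactly the paper's route, and your treatment of the $1$-st measure by Abel summation over the gap-length distribution (producing the factor $\mdim/(1-\mdim)$ directly from $\leb^1(F_{\e})=2\e(N(\e)+1)+\sum_{\lvert L^i_{\om}\rvert\leq 2\e}\lvert L^i_{\om}\rvert$) is a legitimate, slightly more elementary alternative to the paper's appeal to the Rataj--Winter comparison theorem. The localisation via intervals with endpoints off $F$, the identification of the limit measure with $\nu$, and the averaging argument in Part (i) all match the paper's proof in substance.

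There is, however, a genuine gap in your Part (iii). You propose to conclude nonexistence of the fractal curvature measures from ``the oscillation of $p$'', where $p$ is the periodic limit function produced by the lattice renewal theorem applied to the global count $N(\e)$. But for a lattice \emph{self-conformal} (as opposed to self-similar) set this global periodic function can be constant: this is precisely the content of Part (iii) of the paper's Minkowski-content theorem and of the explicit example of a Minkowski measurable lattice set built from the middle-third Cantor set. So oscillation of the global $p$ is not available in general, and your argument as stated proves nothing in those cases. What is actually needed---and what occupies most of the paper's lattice proof---is a localisation: one must exhibit a specific Borel set $B(\kappa)$ (a neighbourhood of a union of cylinder images $\phi_{\kappa^{(i)}}F$) whose associated periodic limit function $f_{\kappa}$ is provably nonconstant, which the paper achieves by conjugating to a self-similar system via the eigenfunction $\eigenf_{-\mdim\xi}$ and then running a four-case analysis on the fractional parts $\{\aaa^{-1}\psi(y)\}$ versus $\{\aaa^{-1}\ln\lvert\tilde L^i_{\om'}\rvert\}$. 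You flag the nonconstancy as ``the delicate point'' but supply no mechanism for it, and the mechanism cannot be the global one you describe; this step must be added before Part (iii) is established. The boundedness and positivity claims in Part (iii), by contrast, do follow from your renewal estimates as sketched.
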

Note that Parts (\ref{curvatureresult:nonlattice}) and (\ref{curvatureresult:lattice}) in particular show that the scaling exponents of $F$ can alternatively be characterised by $s_0(F)= \sup\{t\in\mathbb R\mid \e^t\leb^0(\partial F_{\e})\to\infty\ \text{as}\ \e\to 0\}$ and $s_1(F)=\sup\{t\in\mathbb R\mid \e^t\leb^1(F_{\e})\to\infty\ \text{as}\ \e\to 0\}$ respectively. 

Using the definition of the Minkowski content and \Cref{prop:interval}, we see that the existence of the fractal curvature measures immediately implies the existence of the Minkowski content. But it is important to remark that the fact that the fractal curvature measures do not exist in the lattice case does not imply that the Minkowski content does not exist in this case. Indeed, for a general self-conformal set, which is lattice, the Minkowski content may or may not exist.
A sufficient condition under which the Minkowski content exists is given in Part (\ref{conformalMinkowski:lattice}) of the following theorem. Parts (\ref{conformalMinkowski:average}) and (\ref{conformalMinkowski:nonlattice}) of the following theorem are immediate consequences of \cref{curvatureresult}.

 Let $\Sigma^{\infty}$ be equipped with the topology of pointwise convergence and let $\mathcal C (\Sigma^{\infty})$ denote the space of continuous real valued functions on $\Sigma^{\infty}$. For an $\alpha$-Hölder continuous function $f\in\mathcal F_{\alpha}(\Sigma^{\infty})$ (see Section \ref{sec:PF}) we let $\nu_f$ denote the unique eigenmeasure corresponding to the eigenvalue 1 of the dual of the Perron-Frobenius operator for the potential function $f$ (see Section \ref{sec:PF}).

\begin{thm}[Self-Conformal Sets -- Minkowski Content]\label{conformalMinkowski}
  Under the conditions of \cref{curvatureresult} and letting $c$ denote the constant given in Equation (\ref{constantthm}), the following hold.	
  \begin{enumerate}
  \item\label{conformalMinkowski:average} The average Minkowski content exists and equals 
    \begin{eqnarray*}
      \widetilde{\mathcal{M}}(F)=\frac{2^{1-\mdim}c}{(1-\mdim)\entro_{\mu_{-\mdim\xi}}}.
    \end{eqnarray*}
  \item\label{conformalMinkowski:nonlattice} If $\Phi$ is nonlattice, then the Minkowski content $\mathcal M(F)$ of $F$ exists and coincides with $\widetilde{\mathcal{M}}(F)$.
  \item\label{conformalMinkowski:lattice} If $\Phi$ is lattice, then we have that
    \[
    0<\underline{\mathcal M}(F)\leq\overline{\mathcal M}(F)<\infty.
    \]
    What is more, equality in the above equation can be attained. More precisely let $\z,\psi\in\mathcal{C}(\Sigma^{\infty})$ denote the functions satisfying $\xi-\z=\psi-\psi\circ\sigma$, where the range of $\z$ is contained in a discrete subgroup of $\mathbb R$ and $\aaa\in\mathbb R$ is maximal such that $\z(\Sigma^{\infty})\subseteq\aaa\mathbb Z$. If, for every $t\in[0,\aaa)$, we have 
      \begin{eqnarray}\label{existencecondition}
	&&\sum_{n\in\mathbb Z}\ee^{-\mdim\aaa n}\nu_{-\mdim\z}\circ\psi^{-1}([n\aaa,n\aaa+t))\nonumber\\
	&&\qquad\qquad=\frac{\ee^{\mdim t}-1}{\ee^{\mdim\aaa}-1}\sum_{n\in\mathbb Z}\ee^{-\mdim\aaa n}\nu_{-\mdim\z}\circ\psi^{-1}([n\aaa,(n+1)\aaa)),
      \end{eqnarray}
      then it follows that $\underline{\mathcal{M}}(F)=\overline{\mathcal{M}}(F)$.
  \end{enumerate}
\end{thm}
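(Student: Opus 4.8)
The plan is to deduce parts (\ref{conformalMinkowski:average}), (\ref{conformalMinkowski:nonlattice}) and the two-sided bound in part (\ref{conformalMinkowski:lattice}) directly from \cref{curvatureresult}, and to reserve the genuine work for the equality assertion in the lattice case. First, by \cref{prop:interval} we have $s_1(F)=\mdim-1$ since $\leb^1(F)=0$, so comparing \cref{defn:Mcontent} with \cref{defn:curvaturemeasures,defn:averagefc} yields the identifications $\overline{\mathcal M}(F)=\overline C_1^f(F,\mathbb R)$, $\underline{\mathcal M}(F)=\underline C_1^f(F,\mathbb R)$, $\widetilde{\mathcal M}(F)=\widetilde C_1^f(F,\mathbb R)$, and $\mathcal M(F)=C_1^f(F,\mathbb R)$ whenever $C_1^f$ exists. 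As $\nu(\mathbb R)=1$, evaluating the second formula of \cref{curvatureresult}\,(\ref{curvatureresult:average}) at $\mathbb R$ gives part (\ref{conformalMinkowski:average}); \cref{curvatureresult}\,(\ref{curvatureresult:nonlattice}) gives part (\ref{conformalMinkowski:nonlattice}); and the bound $\overline C_1^f(F,\mathbb R)\le\overline c<\infty$ together with $\underline C_1^f(F,\mathbb R)>0$ from \cref{curvatureresult}\,(\ref{curvatureresult:lattice}) gives $0<\underline{\mathcal M}(F)\le\overline{\mathcal M}(F)<\infty$.

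For the equality I would first rewrite the parallel volume through the gap structure. Since $\leb^1(F)=0$, the convex hull of $F$ minus $F$ is the disjoint union of the main gaps $L_\om^i$ over all finite words $\om$ (the empty word giving the primary gaps $L^i$) and $1\le i\le Q-1$, with $\sum_{\om,i}\Lomi=\diam(F)$. A gap of length $\ell$ contributes $\min(\ell,2\e)$ to $\leb^1(F_\e)$, so that after multiplying by $\e^{\mdim-1}$ the outer boundary contributes only $2\e^{\mdim}=o(1)$ and
\[
	\e^{\mdim-1}\leb^1(F_\e)=\e^{\mdim-1}\!\!\sum_{\Lomi\le2\e}\!\!\Lomi+2\e^{\mdim}\card\{(\om,i)\mid\Lomi>2\e\}+o(1).
\]
Setting $x=\ln(2\e)$ and introducing the weighted measure $\Theta$ determined by $\int f\,\mathrm d\Theta=\sum_{\om,i}\Lomi^{\mdim}f(\ln\Lomi)$, this becomes a renewal convolution
\[
	\e^{\mdim-1}\leb^1(F_\e)=2^{1-\mdim}(v\ast\Theta)(x)+o(1),\qquad v(u)=\min\bigl(\ee^{\mdim u},\ee^{-(1-\mdim)u}\bigr),
\]
where $v$ decays exponentially in both directions because $0<\mdim<1$, and $(v\ast\Theta)(x)=\int v(x-y)\,\mathrm d\Theta(y)$. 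Hence $\overline{\mathcal M}(F)$ and $\underline{\mathcal M}(F)$ are governed by $(v\ast\Theta)(x)$ as $x\to-\infty$.

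Next, writing $S_n\xi=\sum_{k=0}^{n-1}\xi\circ\sigma^k$, bounded distortion gives $\ln\Lomi=-S_n\xi(\om\,\cdot)-\ln\lvert L^i\rvert+o(1)$ for $\om\in\Sigma^n$, so $\Theta$ is precisely the renewal measure attached to the geometric potential, normalised by the vanishing of the pressure of $-\mdim\xi$. I would then invoke the lattice renewal theorem established while proving \cref{curvatureresult}\,(\ref{curvatureresult:lattice}): with $\xi-\z=\psi-\psi\circ\sigma$ and $\z(\Sigma^\infty)\subseteq\aaa\mathbb Z$, it provides a continuous $\aaa$-periodic profile $g$ such that $(v\ast\Theta)(t-m\aaa)\to g(t)$ as $m\to\infty$ for every $t\in[0,\aaa)$, whence $\overline{\mathcal M}(F)=2^{1-\mdim}\max g$ and $\underline{\mathcal M}(F)=2^{1-\mdim}\min g$. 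Thus $\underline{\mathcal M}(F)=\overline{\mathcal M}(F)$ holds exactly when $g$ is constant. Moreover $g$ is the circular convolution $g(t)=\int_0^{\aaa}W(t-s)\,\mathrm d\Lambda(s)$ of the $\aaa$-periodisation $W(u)=\sum_{k\in\mathbb Z}v(u+k\aaa)$ of the weight against the limiting window measure $\Lambda$ of $\Theta$ on $[0,\aaa)$. Since convolving a periodic function with the uniform measure over one full period returns a constant, it suffices to prove that the hypothesis (\ref{existencecondition}) forces $\Lambda$ to be a multiple of Lebesgue measure on $[0,\aaa)$.

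The crux, which I expect to be the main obstacle, is to express $\Lambda$ explicitly through $\nu_{-\mdim\z}\circ\psi^{-1}$ and to match it against (\ref{existencecondition}). Because $S_n\z\in\aaa\mathbb Z$, the atoms of $\Theta$ lie at $\ln\Lomi\equiv-\psi-\ln\lvert L^i\rvert\pmod{\aaa}$, so their residues are distributed, under the eigenmeasure of $-\mdim\z$, exactly as the fold of $\nu_{-\mdim\z}\circ\psi^{-1}$ onto $[0,\aaa)$, while summing the level-$n$ contributions produces the geometric weights $\ee^{-\mdim\aaa n}$ appearing in (\ref{existencecondition}). Keeping track of the intrinsic weighting $\Lomi^{\mdim}=\ee^{\mdim\ln\Lomi}$, a direct computation should show that $\Lambda$ is uniform precisely when the folded measure has Lebesgue density proportional to $\ee^{\mdim t}$ on $[0,\aaa)$ — an assertion which, upon integrating from $0$ to $t$, is exactly identity (\ref{existencecondition}). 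Consequently (\ref{existencecondition}) yields $g$ constant and hence $\underline{\mathcal M}(F)=\overline{\mathcal M}(F)$. The delicate remaining points are to control the $o(1)$ distortion and boundary errors uniformly in $\e$ so that they do not perturb $\limsup$ and $\liminf$, and to absorb the finitely many deterministic shifts $\ln\lvert L^i\rvert$ together with the sum over $i$ so that the final criterion involves $\nu_{-\mdim\z}\circ\psi^{-1}$ alone.
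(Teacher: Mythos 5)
Your treatment of Parts (\ref{conformalMinkowski:average}), (\ref{conformalMinkowski:nonlattice}) and of the two-sided bound in Part (\ref{conformalMinkowski:lattice}) is exactly the paper's: these are read off from \cref{curvatureresult} after identifying the (average, upper, lower) Minkowski content with the total mass of the corresponding 1-st curvature quantity via $s_1(F)=\mdim-1$. For the equality assertion your route differs from the paper's in an interesting way: you decompose $\leb^1(F_{\e})$ directly through the gap lengths, obtaining a two-sided convolution $(v\ast\Theta)$ against the $\mdim$-weighted renewal measure of gap logarithms, and you reduce the claim to the uniformity of a limiting window measure $\Lambda$ on $[0,\aaa)$. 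The paper instead stays entirely with the counting function $\leb^0(\partial F_{\e}\cap\cdot)$ (the 0-th curvature data), applies the lattice renewal theorem (\cref{thmlalleyneu}) to the expressions $\overline{A}^i_{\om}$, $\underline{A}^i_{\om}$, invokes \cref{lem:existanceB} to conclude that under (\ref{existencecondition}) the periodic profiles converge to a common constant, squeezes via \cref{Upsilon}, and only at the very end passes to the volume by the Rataj--Winter comparison (\cref{lemWinterRataj}). Your version avoids Rataj--Winter but pays for it by needing a renewal theorem for the general kernel $v(u)=\min(\ee^{\mdim u},\ee^{-(1-\mdim)u})$, which \cref{thmlalleyneu} does not provide: that theorem only treats the indicator kernel $\mathds 1_{\{S_nf\leq t\}}$, so you would still have to approximate $v$ by step functions or integrate the counting asymptotics --- which is essentially what Stach\'o's theorem and \cref{lemWinterRataj} accomplish.

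The genuine gap is that the decisive computation is deferred rather than carried out. You write that ``a direct computation should show'' that $\Lambda$ is uniform precisely when the folded measure has density proportional to $\ee^{\mdim t}$, and that this matches (\ref{existencecondition}); but this identification --- tracking the sign of $\psi$, the geometric weights $\ee^{-\mdim\aaa n}$, the change of measure $\textup{d}\nu_{-\mdim\xi}=\ee^{-\mdim\psi}\textup{d}\nu_{-\mdim\z}$, the finitely many shifts $\ln\lvert\tilde L^i\rvert$, and the bounded-distortion errors uniformly over the prefix decomposition $\om\in\Sigma^m$ --- is exactly the content of the paper's \cref{lem:existanceB} combined with the squeeze of \cref{Upsilon}, and it is the heart of the proof. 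Two further points need care in your setup: the convergence $(v\ast\Theta)(t-m\aaa)\to g(t)$ must be uniform in $t\in[0,\aaa)$ for the identifications $\overline{\mathcal M}(F)=2^{1-\mdim}\max g$ and $\underline{\mathcal M}(F)=2^{1-\mdim}\min g$ to be legitimate, and the existence of the limiting window measure $\Lambda$ itself is an assertion of asymptotic $\aaa$-periodicity of $\Theta$ that must be extracted from the renewal theorem rather than assumed. None of these obstacles is fatal --- the route is viable --- but as written the proposal establishes the easy parts and postpones precisely the step that the paper's \cref{lem:existanceB} was designed to settle.
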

Note that the sums occuring in Equation (\ref{existencecondition}) are finite.

\begin{rmk}
  Condition (\ref{existencecondition}) in fact not only implies the existence of the Minkowski content but also that $\underline{C}_0^f(F,\mathbb R)=\overline{C}_0^f(F,\mathbb R)$ (see proof of Part (\ref{conformalMinkowski:lattice}) of \cref{conformalMinkowski}).
\end{rmk}
An example of a self-conformal set $F$, which satisfies Condition (\ref{existencecondition}) and thus is Minkowski measurable, is given in \Cref{thexample}.
However, in the special case when $F$ is a self-similar set, Condition (\ref{existencecondition}) cannot be satisfied. In this case it even turns out, that $F$ is Minkowski measurable if and only if $F$ is nonlattice.
This is also reflected in the following theorem.  

\begin{thm}[Self-Similar Sets -- Fractal Curvature Measures]\label{similars}
	Let $F$ denote a self-similar set associated with the iterated function system $\Phi\defeq\{\phi_1,\ldots,\phi_N\}$ acting on $X$. Assume that $\Phi$ satisfies the open set condition with feasible open set $\textup{int} X$ and that $\leb^1(F)=0$. Further, let $r_1,\ldots,r_N$ denote the respective similarity ratios of the maps $\phi_1,\ldots,\phi_N$.
	Let $\mdim$ denote the Minkowski dimension of $F$ and let $\nu$ be the $\mdim$-conformal measure associated with $F$. Then, additionally to the statements of \cref{curvatureresult}, the following hold. 
	\begin{enumerate}
	\item\label{ss:average} 
	The formulae for the average fractal curvature measures simplify to
\begin{eqnarray*}
	\widetilde{C}_0^f(F,\cdot) &=& \frac{2^{-\mdim}\sum_{i=1}^{\Q-1}\lvert L^{i}\rvert^{\mdim}}{-\mdim\sum_{i\in\Sigma}\ln(r_i)r_i^{\mdim}}\cdot \nu(\cdot)\qquad\quad \text{and}\quad\\
	\widetilde{C}_1^f(F,\cdot) &=& 	\frac{2^{1-\mdim}\sum_{i=1}^{\Q-1}\lvert L^{i}\rvert^{\mdim}}{(\mdim-1)\mdim\sum_{i\in\Sigma}\ln(r_i)r_i^{\mdim}}\cdot \nu(\cdot).
\end{eqnarray*}
	\item\label{ss:lattice} If $\Phi$ is lattice, the following holds, strengthening Part (\ref{curvatureresult:lattice}) of \cref{curvatureresult}. For $k\in\{0,1\}$ and for every Borel set $B\subseteq\mathbb R$ for which $F\cap B$ is nonempty and allows a representation as a finite union of sets of the form $\phi_{\om}F$, where $\om\in\Sigma^*$, and for which $F_{\e}\cap B=(F\cap B)_{\e}$ for all sufficiently small $\e>0$ we have
\begin{eqnarray*}
	0<\underline{C}_k^f(F,B)<\overline{C}_k^f(F,B)<\infty.
\end{eqnarray*}
	\end{enumerate}
\end{thm}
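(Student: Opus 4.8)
The plan is to derive both parts by specialising \cref{curvatureresult} to the self-similar case, where the two ingredients $c$ and $\entro_{\mu_{-\mdim\xi}}$ become completely explicit. For Part~(\ref{ss:average}) I would first evaluate the constant $c$ of Equation~(\ref{constantthm}). Since $\phi_{\om}$ is a similarity of ratio $r_{\om}\defeq r_{\om_1}\cdots r_{\om_n}$, the main gaps scale as $\Lomi=r_{\om}\lvert L^i\rvert$, so $\sum_{\om\in\Sigma^n}\Lomi^{\mdim}=\lvert L^i\rvert^{\mdim}\bigl(\sum_{j\in\Sigma}r_j^{\mdim}\bigr)^{n}$. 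As $\mdim$ is the similarity dimension under the open set condition, Moran's identity $\sum_{j\in\Sigma}r_j^{\mdim}=1$ holds; hence every term of the sequence defining $c$ already equals $\sum_{i=1}^{\Q-1}\lvert L^i\rvert^{\mdim}$, so $c=\sum_{i=1}^{\Q-1}\lvert L^i\rvert^{\mdim}$. Next I would identify the entropy: here $\xi(\om)=-\ln r_{\om_1}$ depends only on $\om_1$, so $-\mdim\xi$ has topological pressure $\ln\sum_{j\in\Sigma}r_j^{\mdim}=0$ and its Gibbs measure $\mu_{-\mdim\xi}$ is the Bernoulli measure with weights $(r_j^{\mdim})_{j\in\Sigma}$, of entropy $\entro_{\mu_{-\mdim\xi}}=-\sum_{j\in\Sigma}r_j^{\mdim}\ln r_j^{\mdim}=-\mdim\sum_{j\in\Sigma}\ln(r_j)\,r_j^{\mdim}$. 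Substituting these two values into \cref{curvatureresult}~Part~(\ref{curvatureresult:average}) and rewriting the factor $(1-\mdim)(-\mdim)=(\mdim-1)\mdim$ in the second expression yields the stated formulae.

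For Part~(\ref{ss:lattice}) the engine is an exact scaling relation combined with an in-phase oscillation argument. For any word $\om$ the identity $(\phi_{\om}F)_{\e}=\phi_{\om}(F_{\e/r_{\om}})$ gives $\leb^1((\phi_{\om}F)_{\e})=r_{\om}\leb^1(F_{\e/r_{\om}})$ and $\card\,\partial(\phi_{\om}F)_{\e}=\card\,\partial F_{\e/r_{\om}}$. Set $G_1(\e)\defeq\e^{\mdim-1}\leb^1(F_{\e})$ and $G_0(\e)\defeq\e^{\mdim}\card(\partial F_{\e})/2$, and let $\Theta_1(B,\e)\defeq\e^{s_1}\leb^1(F_{\e}\cap B)$ and $\Theta_0(B,\e)\defeq\e^{s_0}\card(\partial F_{\e}\cap B)/2$ denote the rescaled curvature quantities on $B$; by \Cref{prop:interval} one has $s_1=\mdim-1$ and $s_0=\mdim$. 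Writing the hypothesised decomposition $F\cap B=\bigcup_{j}\phi_{\om^{(j)}}F$ as a finite union of separated cylinders, the open set condition makes their parallel neighbourhoods disjoint for all sufficiently small $\e$; together with the standing assumption $F_{\e}\cap B=(F\cap B)_{\e}$ and the scaling relations above this yields, for small $\e$,
\[
	\Theta_k(B,\e)=\sum_{j}r_{\om^{(j)}}^{\mdim}\,G_k(\e/r_{\om^{(j)}}),\qquad k\in\{0,1\}.
\]

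It remains to pass to the limit. In the self-similar case the coboundary decomposition of \cref{conformalMinkowski}~Part~(\ref{conformalMinkowski:lattice}) may be taken with $\z=\xi$ and $\psi=0$, so the lattice constant $\aaa$ is the largest number with $-\ln r_j\in\aaa\mathbb Z$ for every $j\in\Sigma$. The lattice renewal theorem underlying \cref{curvatureresult} then provides $\aaa$-periodic functions $P_0,P_1$ with $G_k(\e)=P_k(-\ln\e)+o(1)$ as $\e\to0$. Since each $-\ln r_{\om^{(j)}}=\sum_{l}-\ln r_{\om^{(j)}_l}\in\aaa\mathbb Z$, the $\aaa$-periodicity gives $G_k(\e/r_{\om^{(j)}})=P_k(-\ln\e-\ln r_{\om^{(j)}})+o(1)=G_k(\e)+o(1)$, so all summands oscillate \emph{in phase} and $\Theta_k(B,\e)=\bigl(\sum_{j}r_{\om^{(j)}}^{\mdim}\bigr)G_k(\e)+o(1)$. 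Consequently
\[
	\overline{C}_k^f(F,B)=\Bigl(\sum_{j}r_{\om^{(j)}}^{\mdim}\Bigr)\limsup_{\e\to0}G_k(\e),\qquad \underline{C}_k^f(F,B)=\Bigl(\sum_{j}r_{\om^{(j)}}^{\mdim}\Bigr)\liminf_{\e\to0}G_k(\e).
\]
The prefactor is finite and strictly positive because $F\cap B\neq\emptyset$, while \cref{curvatureresult}~Part~(\ref{curvatureresult:lattice}) supplies $0<\liminf_{\e\to0}G_k(\e)$ and $\limsup_{\e\to0}G_k(\e)<\infty$. For $k=1$ these two limits are exactly $\underline{\mathcal M}(F)$ and $\overline{\mathcal M}(F)$; strictness $\liminf_{\e\to0}G_k(\e)<\limsup_{\e\to0}G_k(\e)$ follows since, were they equal, $G_k$ would converge, forcing the approximating measures---whose limiting spatial profile in the self-similar case is that of $\nu$---to converge weakly, i.e.\ the $k$-th fractal curvature measure of $F$ would exist, contradicting \cref{curvatureresult}~Part~(\ref{curvatureresult:lattice}). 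Hence $0<\underline{C}_k^f(F,B)<\overline{C}_k^f(F,B)<\infty$.

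The delicate point, and the main obstacle, is the in-phase bookkeeping: the strict inequality collapses if the summands $G_k(\e/r_{\om^{(j)}})$ were allowed to oscillate out of phase. Establishing that the limit function $P_k$ is $\aaa$-periodic with the \emph{same} $\aaa$ that governs the shifts $-\ln r_{\om^{(j)}}$, together with the genuine non-constancy of $P_k$ (equivalently, the non-Minkowski-measurability of lattice self-similar sets for $k=1$ and its boundary-point analogue for $k=0$), is where the renewal-theoretic input from the proof of \cref{curvatureresult} must be invoked with care.
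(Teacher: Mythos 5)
Your Part~(\ref{ss:average}) is correct and follows the paper's own route exactly: compute $c=\sum_{i=1}^{Q-1}\lvert L^i\rvert^{\mdim}$ from $\Lomi=r_{\om}\lvert L^i\rvert$ and Moran's identity, identify $\mu_{-\mdim\xi}$ as the Bernoulli measure with weights $r_j^{\mdim}$ to get the entropy, and substitute into Part~(\ref{curvatureresult:average}) of \cref{curvatureresult}.

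For Part~(\ref{ss:lattice}) you take a genuinely different route. The paper works locally: it writes $\leb^0(\partial F_{\ee^{-T}}\cap B)/2$ directly as a renewal sum (no bounded-distortion splitting is needed since everything is exact for similarities), applies \cref{thmlalleyneu} to obtain an explicit $\aaa$-periodic limit function $f$, observes that $f$ is piecewise strictly decreasing and hence not a.e.\ constant, and invokes \cref{periodic}. You instead use the exact scaling $(\phi_{\om}F)_{\e}=\phi_{\om}(F_{\e/r_{\om}})$ to reduce the localized quantity on $B$ to the global one, $\Theta_k(B,\e)=\sum_j r_{\om^{(j)}}^{\mdim}G_k(\e/r_{\om^{(j)}})+o(1)$, exploit that the shifts $-\ln r_{\om^{(j)}}$ lie in $\aaa\mathbb Z$ so all terms oscillate in phase, and then obtain strictness indirectly from the nonexistence statement in Part~(\ref{curvatureresult:lattice}) of \cref{curvatureresult}. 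This is sound and arguably more conceptual: it makes transparent that for self-similar sets every admissible $B$ sees the \emph{same} oscillation, whereas the paper's computation re-derives the periodic profile for each $B$. What your route buys is a clean reduction to the global Minkowski-type quantity; what it costs is that non-constancy of the periodic profile is imported from the (hard) nonexistence proof rather than read off from an explicit formula. Three points need tightening. First, under the OSC the pieces $\phi_{\om^{(j)}}F$ may share endpoints, so their $\e$-neighbourhoods need not be disjoint; the decomposition only holds up to a bounded error in $\leb^0$ and an $O(\e)$ error in $\leb^1$, both of which are $o(1)$ after rescaling -- state this rather than claiming exact disjointness. Second, the step ``$G_k$ converges $\Rightarrow$ $C_k^f(F,\cdot)$ exists'' requires the standard distribution-function argument: the half-lines $(-\infty,\bbb]$ with $\bbb\in\mathbb R\setminus F$ are admissible sets in your sense, the approximating measures are uniformly bounded with supports in a fixed compact set, and convergence of total mass plus convergence on this dense family yields weak convergence. (Alternatively, and more directly, the sets $B(\kappa)$ constructed in the paper's nonexistence proof are themselves admissible, so your identity immediately forces $\liminf G_k<\limsup G_k$.) Third, for $k=1$ the additive asymptotic $G_1(\ee^{-T})=P_1(T)+o(1)$ does not follow from \cref{lemWinterRataj} (which only compares upper and lower limits); you need the integral representation via Stach\'o's theorem (\cref{Stachothm}), i.e.\ $\ee^{-T(\mdim-1)}\leb^1(F_{\ee^{-T}})=\ee^{-T(\mdim-1)}\int_T^{\infty}\leb^0(\partial F_{\ee^{-s}})\ee^{-s}\,\textup{d}s$, exactly as in the proof of \cref{periodic}.
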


 Note that $\nu$ coincides with the $\mdim$-dimensional Hausdorff measure normalised on $F$, that is with $\mathcal{H}^{\mdim}(\cdot\cap F)/\mathcal{H}^{\mdim}(F)$.

For self-similar sets the existence of the average fractal curvature measures  and of the fractal curvature measures in the nonlattice case has also been shown in Theorem 1.2.6 of \cite{Winter_thesis}. However, the formulae for the coefficients of the measures obtained in \cite{Winter_thesis} are given by an integration over a certain ``overlap function'' and appear to be much harder to determine explicitely. The statement concerning the lattice case is not covered by \cite{Winter_thesis} and gives a new result.
For the Minkowski content \cref{similars} immediately implies the following corollary which we state without a proof.

\begin{cor}[Self-Similar Sets -- Minkowski Content]\label{similarmink}
	Under the conditions of \cref{similars} the following hold.
\begin{enumerate}
	\item The average Minkowski content of $F$ exists and is given by
\[
	\widetilde{\mathcal M}(F)=\frac{2^{1-\mdim}\sum_{i=1}^{\Q-1}\lvert L^{i}\rvert^{\mdim}}{(\mdim-1)\mdim\sum_{i\in\Sigma}\ln(r_i)r_i^{\mdim}}.
\]	
	\item\label{ssMinkowski:nonlattice} If $\Phi$ is nonlattice, its Minkowski content $\mathcal M(F)$ exists and is equal to $\widetilde{\mathcal M}(F)$.
	\item\label{ssMinkowski:lattice} If $\Phi$ is lattice, then 
	\[
		0<\underline{\mathcal{M}}(F)<\overline{\mathcal{M}}(F)<\infty.
	\]
\end{enumerate}
\end{cor}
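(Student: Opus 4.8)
The plan is to exploit the single structural fact that, since $\leb^1(F)=0$, \cref{prop:interval} gives $s_1(F)=\mdim-1$. Consequently $\e^{s_1(F)}=\e^{\mdim-1}$ and $\e^{s_1(F)-1}=\e^{\mdim-2}$, so that upon choosing $B=\mathbb R$ the defining expressions of the $1$-st (average) fractal curvature measure collapse onto those of the (average) Minkowski content:
\[
\e^{s_1(F)}\leb^1(F_{\e}\cap\mathbb R)=\e^{\mdim-1}\leb^1(F_{\e})\quad\text{and}\quad \e^{s_1(F)-1}\leb^1(F_{\e}\cap\mathbb R)=\e^{\mdim-2}\leb^1(F_{\e}).
\]
Taking limits superior and inferior immediately gives $\overline{\mathcal M}(F)=\overline{C}_1^f(F,\mathbb R)$ and $\underline{\mathcal M}(F)=\underline{C}_1^f(F,\mathbb R)$. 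Furthermore, since all the approximating measures $\e^{s_1(F)}\leb^1(F_{\e}\cap\cdot)$, and likewise their logarithmic averages, are supported in the fixed compact set $F_1$ for $\e\le 1$, testing the relevant weak convergence against a compactly supported continuous function that equals $1$ on $F_1$ forces convergence of total masses. Hence existence of the (average) $1$-st fractal curvature measure entails existence of the (average) Minkowski content, and the two total masses agree: $\mathcal M(F)=C_1^f(F,\mathbb R)$ and $\widetilde{\mathcal M}(F)=\widetilde{C}_1^f(F,\mathbb R)$. The whole corollary then reduces to reading off total masses from \cref{similars,curvatureresult}.

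For the first statement, Part (\ref{curvatureresult:average}) of \cref{curvatureresult} guarantees that $\widetilde{C}_1^f(F,\cdot)$ exists, and Part (\ref{ss:average}) of \cref{similars} identifies it as a constant multiple of the conformal measure $\nu$. As $\nu$ is a probability measure supported on $F\subset\mathbb R$, we have $\nu(\mathbb R)=1$, whence $\widetilde{\mathcal M}(F)=\widetilde{C}_1^f(F,\mathbb R)$ is precisely the stated quotient. For Part (\ref{ssMinkowski:nonlattice}), the nonlattice case of Part (\ref{curvatureresult:nonlattice}) of \cref{curvatureresult} yields $C_1^f(F,\cdot)=\widetilde{C}_1^f(F,\cdot)$ as a genuine weak limit, so by the total-mass argument of the first paragraph $\mathcal M(F)$ exists and equals $C_1^f(F,\mathbb R)=\widetilde{C}_1^f(F,\mathbb R)=\widetilde{\mathcal M}(F)$.

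For the lattice statement in Part (\ref{ssMinkowski:lattice}) I would apply Part (\ref{ss:lattice}) of \cref{similars} with the choice $B=\mathbb R$ and $k=1$. This $B$ meets all three admissibility requirements: $F\cap\mathbb R=F\neq\emptyset$; the set $F$ admits the representation $F=\bigcup_{i\in\Sigma}\phi_i F$ as a finite union of images $\phi_{\om}F$; and trivially $F_{\e}\cap\mathbb R=F_{\e}=(F\cap\mathbb R)_{\e}$ for every $\e>0$. Hence Part (\ref{ss:lattice}) of \cref{similars} yields $0<\underline{C}_1^f(F,\mathbb R)<\overline{C}_1^f(F,\mathbb R)<\infty$, which by the first paragraph is exactly $0<\underline{\mathcal M}(F)<\overline{\mathcal M}(F)<\infty$.

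The two points requiring care are the passage from weak convergence of measures to convergence of total masses, handled by the common-compact-support argument above, and the verification that $B=\mathbb R$ lies in the admissible class of Part (\ref{ss:lattice}) of \cref{similars}. The one genuinely substantial assertion is the strict separation $\underline{\mathcal M}(F)<\overline{\mathcal M}(F)$ in the lattice case, which sharpens the merely nonstrict bounds obtainable from Part (\ref{curvatureresult:lattice}) of \cref{curvatureresult}; this strictness is imported wholesale from Part (\ref{ss:lattice}) of \cref{similars}, so no independent oscillation estimate is needed here. I expect the only place a reader might hesitate is precisely the claim that $B=\mathbb R$ is admissible, which is why I would spell out the three defining conditions explicitly rather than leave the reduction implicit.
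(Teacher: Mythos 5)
Your proposal is correct and is exactly the intended argument: the paper states this corollary "without a proof" as an immediate consequence of \cref{similars}, and your reduction — using $s_1(F)=\mdim-1$ from \cref{prop:interval} to identify $\overline{\mathcal M}(F)$, $\underline{\mathcal M}(F)$ and $\widetilde{\mathcal M}(F)$ with the total masses of the corresponding $1$-st (average) fractal curvature quantities, then reading off $\nu(\mathbb R)=1$ and applying Part (\ref{ss:lattice}) of \cref{similars} with the admissible choice $B=\mathbb R$, $F=\bigcup_{i\in\Sigma}\phi_i F$ — is precisely how that implication is meant to be carried out. The care you take with the passage from weak convergence to convergence of total masses and with the admissibility of $B=\mathbb R$ is appropriate and resolves the only points the paper leaves implicit.
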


Part (\ref{ssMinkowski:nonlattice}) of \cref{similarmink} has been obtained in Proposition 4 of \cite{Falconer_Minkowski} 
under the strong seperation condition. Part (\ref{ssMinkowski:lattice}) of \cref{similarmink} has also been addressed in Theorem 8.36 of \cite{Lapidus_Frankenhuysen_Springer}.

Another special case of self-conformal sets are $\mathcal C^{1+\alpha}$ diffeomorphic images of self-similar sets. Here, $\mathcal C^{1+\alpha}$ is the class of real valued functions which are differentiable with Hölder continuous derivative.
For these sets Parts (\ref{curvatureresult:average}) and (\ref{curvatureresult:nonlattice}) of \cref{curvatureresult} yield interesting relationships between the (average) fractal curvature measures of the self-similar set and of its $\mathcal C^{1+\alpha}$ diffeomorphic image which are stated in the following corollary. 

\begin{thm}[$\mathcal C^{1+\alpha}$ Images -- Fractal Curvature Measures]\label{corimage}
  Let $K\subset\mathbb R$ denote a self-similar set for the iterated function system $\Phi$ acting on $X$ which satisfies the open set condition with feasible open set $\textup{int} X$. Let $\mdim$ denote its Min\-kowski dimension, let $\mathcal U\supset X$ be a connected open neighbourhood of $X$ in $\mathbb R$ and $g\colon \mathcal U\to\mathbb R$ be a $\mathcal{C}^{1+\alpha}(\mathcal U)$ map, $\alpha>0$, for which $|g'|$ is bounded away from 0. Assume that $\leb^1(K)=0$ and set $F\defeq g(K)$.
  \begin{enumerate}
  \item\label{image:average} The average fractal curvature measures of both $K$ and $F$ exist. Moreover, they are absolutely continuous and for $k\in\{0,1\}$ their Radon-Nikodym derivatives are given by 
    \begin{eqnarray*}
      \frac{\textup{d}\widetilde{C}_k^f(F,\cdot)}{\textup{d}\widetilde{C}_k^f(K,\cdot)\circ g^{-1}}=\lvert g'\circ g^{-1}\rvert^{\mdim}.
    \end{eqnarray*}
  \item\label{image:nonlattice} If $\Phi$ is nonlattice, then the fractal curvature measures of both $K$ and $F$ exist and are absolutely continuous with Radon-Nikodym derivatives
    \begin{eqnarray*}
      \frac{\textup{d}C_k^f(F,\cdot)}{\textup{d}C_k^f(K,\cdot)\circ g^{-1}}=\lvert g'\circ g^{-1}\rvert^{\mdim},
    \end{eqnarray*}
    for $k\in\{0,1\}$.
  \end{enumerate}
\end{thm}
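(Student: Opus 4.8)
The plan is to deduce \Cref{corimage} directly from the conformal results in \Cref{curvatureresult}, exploiting the fact that $F = g(K)$ is itself a self-conformal set whose iterated function system is conjugate to that of $K$ via $g$. First I would verify that the maps $\psi_i \defeq g \circ \phi_i \circ g^{-1}$ form a $\mathcal C^{1+\alpha}$ conformal iterated function system on $\mathcal U$ (or a suitable compact neighbourhood): since $g$ is a $\mathcal C^{1+\alpha}$ diffeomorphism onto its image with $|g'|$ bounded away from $0$ and $\infty$, each $\psi_i$ inherits the contraction and Hölder-derivative properties, and the open set condition for $\Phi$ transfers to $\{\psi_i\}$ with feasible open set $g(\textup{int}X)$. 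The invariant set of $\{\psi_i\}$ is exactly $g(K) = F$, and because $g$ is bi-Lipschitz it preserves Minkowski dimension, so $\dim_M F = \mdim$ as well; moreover $\leb^1(F)=0$ follows from $\leb^1(K)=0$. Thus both $K$ and $F$ satisfy the hypotheses of \Cref{curvatureresult}, and the (average) fractal curvature measures exist for both, giving existence in Parts (\ref{image:average}) and (\ref{image:nonlattice}) at once.

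For the Radon--Nikodym formula the key observation is that all four measures are, by \Cref{curvatureresult}(\ref{curvatureresult:average}), constant multiples of the respective $\mdim$-conformal measures. So the heart of the proof reduces to comparing the $\mdim$-conformal measure $\nu_K$ of $\Phi$ with the $\mdim$-conformal measure $\nu_F$ of $\{\psi_i\}$. I would show that the pushforward $\nu_K \circ g^{-1}$ is, up to the density $|g'\circ g^{-1}|^{\mdim}$, the conformal measure $\nu_F$. Concretely, using the chain rule $\psi_i'(y) = g'(\phi_i(g^{-1}(y)))\,\phi_i'(g^{-1}(y))\,(g^{-1})'(y)$ and the conformality relation (\ref{conformalmeasure}) for $\nu_K$, one computes for a Borel set $B \subseteq g(X)$ the action of $\psi_i$ on the candidate measure $|g'\circ g^{-1}|^{\mdim}\,\mathrm{d}(\nu_K\circ g^{-1})$ and checks it reproduces itself under the defining functional equation. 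The telescoping of the $|g'|$-factors along orbits is exactly what forces the Radon--Nikodym derivative to be $|g'\circ g^{-1}|^{\mdim}$; by uniqueness of the conformal measure, this identifies the pushforward density.

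The remaining bookkeeping is to track the proportionality constants. Since $\widetilde C_k^f(K,\cdot) = \kappa_k^K \nu_K(\cdot)$ and $\widetilde C_k^f(F,\cdot) = \kappa_k^F \nu_F(\cdot)$ with explicit constants $\kappa_k$ from \Cref{curvatureresult}(\ref{curvatureresult:average}), and since both the entropy $\entro_{\mu_{-\mdim\xi}}$ and the limit constant $c$ in (\ref{constantthm}) are conjugacy invariants (the primary-gap lengths $\Lomi$ transform consistently and the Gibbs measure is preserved under the topological conjugacy of the shift codings, which coincide), I expect $\kappa_k^F = \kappa_k^K$, so the constants cancel in the Radon--Nikodym derivative and only the conformal-measure density $|g'\circ g^{-1}|^{\mdim}$ survives. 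In the nonlattice case, Part (\ref{curvatureresult:nonlattice}) gives $C_k^f = \widetilde C_k^f$ for both sets, so the identical formula holds for the non-averaged measures, yielding Part (\ref{image:nonlattice}).

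I expect the main obstacle to be the careful verification that $c$ and $\entro_{\mu_{-\mdim\xi}}$ are genuinely invariant under the conjugacy, and more subtly that the geometric potential $\xi_F$ of the conjugated system differs from $\xi_K$ by a coboundary $\psi - \psi\circ\sigma$ (with $\psi$ built from $\ln|g'|$ along the coding). This coboundary does not affect the Gibbs measure, the entropy, nor the lattice/nonlattice dichotomy, which is precisely why $K$ and $F$ are simultaneously lattice or nonlattice and why the averaging constants match. Establishing this coboundary relation cleanly, and confirming that the $\mdim$-conformal measure transforms with exactly the density $|g'\circ g^{-1}|^{\mdim}$ rather than some orbit-dependent weight, will be the crux; everything else is a transfer of the already-proved \Cref{curvatureresult} through the diffeomorphism $g$.
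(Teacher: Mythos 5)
Your route is the same as the paper's: show $F=g(K)$ is self-conformal for the conjugated system $g\circ R_i\circ g^{-1}$, apply \cref{curvatureresult} to $F$ and \cref{similars} to $K$, observe the coboundary relation $\xi_F-\xi_K=\psi-\psi\circ\sigma$ with $\psi=-\ln\lvert g'\circ g^{-1}\circ\bij\rvert$ (so the entropy, the Gibbs measure on $\Sigma^{\infty}$ and the lattice/nonlattice dichotomy are preserved), and then compare the two $\mdim$-conformal measures. However, the constant bookkeeping that you yourself single out as the crux contains two compensating errors, each of which is false as stated. First, the constant $c$ of (\ref{constantthm}) is \emph{not} a conjugacy invariant: since $\lvert L^i_{\om}\rvert=\lvert g(\tilde{L}^i_{\om})\rvert$ and $\lvert\tilde{L}^i_{\om}\rvert=r_{\om}\lvert\tilde{L}^i\rvert$, bounded distortion gives $c_F=\sum_{i=1}^{Q-1}\lvert\tilde{L}^i\rvert^{\mdim}\int_K\lvert g'\rvert^{\mdim}\textup{d}\nu_K=c_K\int_K\lvert g'\rvert^{\mdim}\textup{d}\nu_K$, so the proportionality constants satisfy $\kappa_k^F=\kappa_k^K\int_K\lvert g'\rvert^{\mdim}\textup{d}\nu_K$ rather than $\kappa_k^F=\kappa_k^K$. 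Second, because both $\nu_F$ and $\nu_K$ are probability measures, the density of $\nu_F$ with respect to $\nu_K\circ g^{-1}$ is the \emph{normalised} one, $\lvert g'\circ g^{-1}\rvert^{\mdim}\big(\int_K\lvert g'\rvert^{\mdim}\textup{d}\nu_K\big)^{-1}$; the unnormalised $\lvert g'\circ g^{-1}\rvert^{\mdim}$ does not integrate to $1$ against $\nu_K\circ g^{-1}$. The two missing factors of $\int_K\lvert g'\rvert^{\mdim}\textup{d}\nu_K$ cancel in the quotient $\kappa_k^F\,\textup{d}\nu_F\big/\big(\kappa_k^K\,\textup{d}(\nu_K\circ g^{-1})\big)$, which is why your final Radon--Nikodym formula is nevertheless correct --- this cancellation is exactly how the paper's proof (via its observations on $c_F$ and on $\textup{d}\nu_F/\textup{d}(\nu_K\circ g^{-1})$) proceeds --- but to make the argument sound you must carry the normalisation factor through both intermediate steps instead of asserting that $c$ and the conformal-measure density are individually invariant.
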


In contrast to the self-similar setting, the Minkowski content of a $\mathcal C^{1+\alpha}$ diffeomorphic image of a lattice self-similar set may or may not exist. In fact, for every lattice fractal self-similar set $K$ there exist diffeomorphisms $g\in\mathcal{C}^{1+\alpha}$ such that $g(K)$ is Minkowski measurable. The explicit form of such diffeomorphisms is given in Part (\ref{minim:lattice}) of the next corollary. Parts (\ref{minim:average}) and (\ref{minim:nonlattice}) of the next corollary are immediate consequences of \cref{corimage}.

\begin{cor}[$\mathcal C^{1+\alpha}$ Images -- Minkowski Content]\label{corminim}
  Assume the conditions of \cref{corimage} and let $\nu$ denote the $\mdim$-conformal measure associated with $K$. Then we have the following.
  \begin{enumerate}
  \item\label{minim:average} The average Minkowski contents of both $K$ and $F$ exist and satisfy
    \[
    \widetilde{\mathcal M}(F)=\widetilde{\mathcal M}(K)\cdot \int \lvert g'\rvert^{\mdim}\textup{d}\nu.
    \]
  \item\label{minim:nonlattice} If $\Phi$ is nonlattice, then the Minkowski content of both $K$ and $F$ exist and satisfy 
    \[
    \mathcal M(F)=\mathcal M(K)\cdot \int \lvert g'\rvert^{\mdim}\textup{d}\nu.
    \]
  \item\label{minim:lattice} If $\Phi$ is lattice, then the Minkowski content of $K$ does not exist, whereas the Minkowski content of $F$ might or might not exist. More precisely, assume that $K\subseteq[0,1]$ and that the geometric potential function $\xi$ associated with $\Phi$ is lattice. Let $\aaa>0$ be maximal such that the range of $\xi$ is contained in $\aaa\mathbb Z$. Define $\tilde{g}\colon\mathbb R\to\mathbb R$, $\tilde{g}(x)\defeq\nu((-\infty,x])$ to be the distribution function of $\nu$. For $n\in\mathbb N$ define the function $g_n\colon[-1,\infty)\to\mathbb R$ by
\[
g_n(x)\defeq\int_{-1}^x\left(\tilde{g}(r)(\ee^{\mdim\aaa n}-1)+1\right)^{-1/\mdim}\textup{d}r
\]
and set $F_n\defeq g_n(K)$. Then for every $n\in\mathbb N$ we have $\underline{\mathcal{M}}(F_n)=\overline{\mathcal{M}}(F_n)$.
  \end{enumerate}
\end{cor}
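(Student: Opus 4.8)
The plan is to realise $F_n$ as the invariant set of a \emph{lattice} conformal IFS whose associated data satisfy the sufficient condition (\ref{existencecondition}), and then to quote Part (\ref{conformalMinkowski:lattice}) of \cref{conformalMinkowski}. First I would identify the generating system: since $K$ is the invariant set of $\Phi=\{\phi_1,\dots,\phi_N\}$ and $g_n$ is a diffeomorphism, $F_n=g_n(K)$ is the invariant set of the conjugated system $\Phi_n\defeq\{g_n\circ\phi_i\circ g_n^{-1}\}_i$. I would check that $g_n$ meets the hypotheses of \cref{corimage}: differentiating the defining integral gives $g_n'(x)=(\tilde g(x)(\ee^{\mdim\aaa n}-1)+1)^{-1/\mdim}$, which lies in $[\ee^{-\aaa n},1]$, so $|g_n'|$ is bounded away from $0$; and $g_n'$ is $\alpha$-H\"older because $\tilde g$ is H\"older (the $\mdim$-conformal measure of a self-similar set is Ahlfors regular, coinciding with normalised $\mdim$-dimensional Hausdorff measure) while $s\mapsto s^{-1/\mdim}$ is smooth on $[1,\ee^{\mdim\aaa n}]$. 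Together with $\leb^1(F_n)=\leb^1(g_n(K))=0$, this places $\Phi_n$ within the scope of \cref{conformalMinkowski}.

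Next I would compute the geometric potential $\xi_n$ of $\Phi_n$ by the chain rule, evaluating at the code point $\bij(\om)$ of the self-similar system. A direct calculation gives $\xi_n=\xi-(G-G\circ\sigma)$, where $G\defeq\ln\lvert g_n'\rvert\circ\bij$ and $\xi(\om)=-\ln r_{\om_1}$ is the geometric potential of $\Phi$, which for a self-similar system is constant on $1$-cylinders. Thus $\xi_n$ is cohomologous to $\xi$; in particular $\xi_n$ is lattice with the same maximal constant $\aaa$, and in the decomposition required by \cref{conformalMinkowski} I may take $\z\defeq\xi$ (valued in $\aaa\mathbb Z$, with $\aaa$ maximal) and $\psi\defeq-G=-\ln\lvert g_n'\rvert\circ\bij$. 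Since $\z=\xi$ for the self-similar system, the eigenmeasure $\nu_{-\mdim\z}=\nu_{-\mdim\xi}$ projects under the code map $\bij$ to the $\mdim$-conformal measure $\nu$.

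The heart of the argument is the evaluation of the pushforward $\nu_{-\mdim\z}\circ\psi^{-1}$. Because $\psi=-\ln\lvert g_n'\rvert\circ\bij$ factors through $\bij$, this pushforward equals $\nu\circ\rho^{-1}$, where $\rho(x)\defeq-\ln\lvert g_n'(x)\rvert=\mdim^{-1}\ln(\tilde g(x)(\ee^{\mdim\aaa n}-1)+1)$ sweeps the interval $[0,\aaa n]$ as $\tilde g$ runs over $[0,1]$. The crucial observation is that $\tilde g$ is the distribution function of the atomless measure $\nu$, so $\tilde g$ pushes $\nu$ forward to Lebesgue measure on $[0,1]$; combined with the explicit form of $\rho$ this shows that $\nu\circ\rho^{-1}$ has distribution function $s\mapsto(\ee^{\mdim s}-1)/(\ee^{\mdim\aaa n}-1)$ on $[0,\aaa n]$. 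With this measure in hand both sides of (\ref{existencecondition}) reduce to finite sums over $m=0,\dots,n-1$, and the factors $\ee^{-\mdim\aaa m}$ cancel exactly against the increments $\ee^{\mdim\aaa m}(\ee^{\mdim t}-1)/(\ee^{\mdim\aaa n}-1)$ of the distribution function, so each side collapses to $n(\ee^{\mdim t}-1)/(\ee^{\mdim\aaa n}-1)$. Hence (\ref{existencecondition}) holds for every $t\in[0,\aaa)$, and Part (\ref{conformalMinkowski:lattice}) of \cref{conformalMinkowski} yields $\underline{\mathcal M}(F_n)=\overline{\mathcal M}(F_n)$.

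The main obstacle I anticipate is not the final telescoping computation, which is essentially forced by the design of $g_n$, but the bookkeeping in the two preceding steps: verifying cleanly that $g_n\in\mathcal C^{1+\alpha}$ with derivative bounded away from zero so that \cref{corimage,conformalMinkowski} genuinely apply, and tracking the cohomology with the correct signs so that both the decomposition $\xi_n-\z=\psi-\psi\circ\sigma$ and the identification $\bij_*\nu_{-\mdim\z}=\nu$ are set up correctly. Once the pushforward measure is pinned down as having the stated exponential distribution function, the verification of (\ref{existencecondition}) is immediate.
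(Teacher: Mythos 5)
Your proposal is correct and follows essentially the same route as the paper's proof: the paper likewise takes $\z=\xi_K$ and $\psi=-\ln\lvert g_n'\rvert\circ\bij_K$ from the cohomology $\xi_n-\xi_K=\psi-\psi\circ\sigma$, uses that $\tilde g$ pushes $\nu$ to Lebesgue measure on $[0,1]$ to identify $\nu_{-\mdim\z}\circ\psi^{-1}$, and verifies Condition (\ref{existencecondition}) by the same telescoping computation before invoking Part (\ref{conformalMinkowski:lattice}) of \cref{conformalMinkowski}. Your additional checks that $g_n'$ is H\"older and bounded away from zero are sound and only make explicit what the paper leaves implicit.
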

\begin{rmk}
  The sets $F_n$ constructed in Part (\ref{minim:lattice}) of \cref{corminim} are actually not only Minkowski measurable but also satisfy $\underline{C}_0^f(F_n)=\overline{C}_0^f(F_n)$ (see proof of Part (\ref{minim:lattice}) of \cref{corminim}).
\end{rmk}

The results stated in \cref{corimage,corminim} have recently been obtained also for higher dimensions in \cite{Uta}. There, $\mathcal C^{1+\alpha}$ diffeomorphic images of self-similar sets satisfying the strong seperation condition are considered.

The above results enable us to construct examples of lattice self-conformal sets for which the Minkowski content exist.

\begin{example}\label{thexample}
  Let $K\subseteq[0,1]$ be the Middle Third Cantor Set and let $\nu$ denote the $\ln2/\ln 3$-conformal measure associated with $K$. 
Let $\tilde{g}\colon\mathbb R\to\mathbb R$ denote the Devil's Staircase Function defined by $\tilde{g}(r)\defeq\nu((-\infty,r])$, define the function $g\colon[-1,\infty)\to\mathbb R$ by 
\[
g(x)\defeq\int_{-1}^{x}(\tilde{g}(y)+1)^{-\ln3/\ln2}\textup{d}y
\]
and set $F\defeq g(K)$. Then we have $\underline{\mathcal M}(F)=\overline{\mathcal M}(F)$, although $\underline{\mathcal M}(K)<\overline{\mathcal M}(K)$. This is a consequence of \cref{similarmink,corminim}.
\end{example}

\section{Preliminaries}\label{sec:setting}

\subsection{Fractal Curvature Measures}\label{sec:fcm}

The work of \cite{Groemer} and \cite{Schneider_polyconvex} plays a vital role in the introduction of Winter's fractal curvature measures. In what follows, we focus on the construction in the one-dimensional setting.
For a set $Y\subset\mathbb R$ which is a finite union of compact convex sets, there exist two curvature measures, namely the 0-th and the 1-st curvature measure of $Y$. Originally, these measures were defined through a localised Steiner formula (see \cite{Federercurvature} and \cite{Schneider_polyconvex}), but an equivalent and simpler characterisation is the following. The 1-st curvature measure of $Y$ equals $\leb^1(Y\cap\cdot)$ and under the additional assumption that $Y$ is the closure of its interior, the 0-th curvature measure is equal to $\leb^0(\partial Y\cap\cdot)/2$. 

If $Y\subset\mathbb R$ is not a finite union of compact convex sets, but an arbitrary compact set, we still have that the $\e$-parallel neighbourhood $Y_{\e}$ of $Y$ is a finite union of convex compact sets, for each $\e>0$. Moreover, $Y_{\e}$ is the closure of its interior, for each $\e>0$. Thus, the 0-th and 1-st curvature measures are defined on $Y_{\e}$ and are equal to the measures $\leb^0(\partial Y_{\e}\cap\cdot)/2$ and $\leb^1(Y_{\e}\cap\cdot)$. 
The fractal curvature measures now arise by taking the weak limit as $\e\to 0$. However, before taking the limit, we observe that for a fractal set $F\subset\mathbb R$ one typically obtains that the number of boundary points of $F_{\e}$ tends to infinity as $\e\to 0$, whereas the volume of $F_{\e}$ tends to zero as $\e\to 0$.
In order to obtain nontrivial measures, we need to introduce the curvature scaling exponents $s_0(F)$ and $s_1(F)$ as in \cref{scalingexp}.
By taking the weak limits of the rescaled curvature measures $\e^{s_0(F)}\cdot\leb^0(\partial F_{\e}\cap\,\cdot\,)/2$ and $\e^{s_1(F)}\cdot \leb^1(F_{\e}\cap\,\cdot\,)$ as $\e\to 0$, we obtain the fractal curvature measures $C_0^f(F,\cdot)$ and $C_1^f(F,\cdot)$ (\cref{defn:curvaturemeasures}), whenever the weak limits exist. The average fractal curvature measures are gained by taking the weak limit over the average rescaled curvature measures if these limits exist (\cref{def:averagefc}).

Besides extending the notions of curvature, the fractal curvature measures also provide a set of geometric characteristics of a fractal set which can be used to distinguish fractal sets of the same Minkowski dimension. 
More precisely, considering two fractal sets $F_1,F_2\subseteq [0,1]$ with $\{0,1\}\subseteq F_1,F_2$ which are of the same Minkowski dimension, the 1-st fractal curvature measure compares the local rate of decay of the lengths of the $\e$-parallel neighbourhood of $F_1$ and $F_2$. In this way it can be interpreted as ``local fractal length". Since, by the inclusion exclusion principle, the above mentioned rate of decay correlates with the length of the overlap of sets of the form $(\phi_{\om}F_i)_{\e}$, where $\om\in\Sigma^n$ for $n\in\mathbb N$ and $i\in\{0,1\}$, the value of the 1-st fractal curvature measure describes the distribution of the gaps. That is, the more equally spread the gaps are over the fractal, the greater is its fractal curvature measure.
Analogously, the value of the 0-th fractal curvature measure can be interpreted as the ``local fractal number of boundary points" or ``local fractal Euler number". For further details on the geometric interpretation in higher dimensions, we refer to \cite{WinterLlorente}, \cite{Winter_thesis} and \cite{Diplomarbeit}.

\subsection{Self-Conformal Sets and the Shift-Space}\label{sec:selfconfshift}

Let $X\subset\mathbb R$ be a nonempty compact interval. We call $\Phi\defeq\{\phi_i\colon X\to X\mid i\in\{1,\ldots,N\}\}$ a \emph{conformal iterated function system (cIFS)} acting on $X$, provided $N\geq 2$ and $\phi_1,\ldots,\phi_N$ are differentiable contractions with $\alpha$-Hölder continuous derivatives $\phi_1',\ldots,\phi_N'$, $\alpha>0$, where $\lvert\phi_1'\rvert,\ldots,\lvert\phi_N'\rvert$ shall be bounded away from both 0 and 1. 
The cIFS $\Phi\defeq\{\phi_1,\ldots,\phi_N\}$ is said to satisfy the \emph{open set condition (OSC)} if there exists a nonempty open bounded set $O\subset\mathbb R$ such that $\bigcup_{i=1}^{N}\phi_i(O)\subseteq O$ and $\phi_i(O)\cap\phi_j(O)=\emptyset$ for $i,j\in\{1,\ldots,N\},\ i\neq j$. $O$ then is called a \textit{feasible open set}. Note, that in our context conformality in dimension one just means that the derivatives are Hölder continuous. This extra condition can be dropped in higher dimensions.

\begin{defn}\label{def:selfconformal}
	We call the unique nonempty compact invariant set $F$ of a cIFS $\Phi$ the \emph{self-conformal} set associated with $\Phi$.
\end{defn}

\begin{rmk}
	One easily verifies that our definition of a cIFS coincides with the definition of a finite conformal iterated function system in $\mathbb R$ given in \cite{MauldinUrbanski}.
\end{rmk}
\begin{proposition}\label{intervalfractal}
	Let $\Phi$ be a cIFS acting on $X$ which satisfies the OSC with feasible open set $\textup{int} X$ and let $F$ be the self-conformal set associated with $\Phi$. Then $F$ is either a nonempty compact interval or has zero one-dimensional Lebesgue measure. 
\end{proposition}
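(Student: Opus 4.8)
The plan is to prove the contrapositive dichotomy: assuming $\leb^1(F)>0$, I would show that $F$ coincides with its convex hull and is therefore a nonempty compact interval, the complementary case $\leb^1(F)=0$ being exactly the second alternative. Write $C\defeq\textup{conv}(F)$; if $F$ is a single point there is nothing to prove since then $\leb^1(F)=0$, so I may assume $C$ is a nondegenerate compact interval. I would first record the cylinder structure. Since each $\phi_i$ is a contraction with $\lvert\phi_i'\rvert$ bounded away from $0$, it is a strictly monotone $\mathcal C^1$ homeomorphism onto a compact subinterval, and for $\om\in\Sigma^n$ the cylinder $\phi_\om C$ equals $\textup{conv}(\phi_\om F)$. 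Iterating invariance gives $F=\bigcup_{\om\in\Sigma^n}\phi_\om F$, and the OSC, together with $\textup{int}C\subseteq\textup{int}X$, yields that distinct cylinders of the same length meet at most in endpoints, whence $F\cap\phi_\om C=\phi_\om F$ for every $\om$. I would also invoke the bounded distortion property of a cIFS, a standard consequence of the $\alpha$-H\"older continuity of the $\phi_i'$, in the form $\lvert\phi_\om'(x)\rvert/\lvert\phi_\om'(y)\rvert\leq K$ for all $\om$ and all $x,y\in X$, with $K$ independent of $\om$; this makes relative lengths comparable under $\phi_\om$, i.e. $\leb^1(\phi_\om A)/\leb^1(\phi_\om C)\geq K^{-1}\leb^1(A)/\leb^1(C)$ for every Borel $A\subseteq C$.

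The second step is a tiling lemma ensuring that a non-interval $F$ produces a genuine primary gap. If $C\setminus\bigcup_i\phi_i C$ has empty interior, then the essentially disjoint subintervals $\phi_i C=\textup{conv}(\phi_i F)$ tile $C$ with abutting endpoints lying in $F$; recursing, for every $n$ the cylinders $\{\phi_\om C\}_{|\om|=n}$ tile $C$ with all endpoints in $F$ and with lengths tending to $0$, so $F$ is dense in $C$ and hence, being closed, equals $C$. Thus, if $F$ is \emph{not} an interval, there is a primary gap $L^j$ with $\leb^1(L^j)>0$; since $L^j\subseteq C\setminus\bigcup_i\phi_i C$ is disjoint from $F$, the identity $F\cap\phi_\om C=\phi_\om F$ together with injectivity of $\phi_\om$ gives $\phi_\om(L^j)\cap F=\phi_\om(L^j\cap F)=\emptyset$, so each $\phi_\om(L^j)$ is a gap of $F$ sitting inside $\phi_\om C$.

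The final step is a Lebesgue density argument. Supposing $\leb^1(F)>0$ but $F$ not an interval, I would pick a density point $x_0\in F$ and write $x_0=\bij(\tau)$ for some $\tau\in\Sigma^{\infty}$. For each $n$ the cylinder $\phi_{\tau_1\cdots\tau_n}C$ is an interval of length $r_n\defeq\leb^1(\phi_{\tau_1\cdots\tau_n}C)$ containing $x_0$, with $r_n\to0$ because $\sup_i\lVert\phi_i'\rVert_{\infty}<1$; hence $\phi_{\tau_1\cdots\tau_n}C\subseteq[x_0-r_n,x_0+r_n]$. Inside it lies the gap $\phi_{\tau_1\cdots\tau_n}(L^j)$, which avoids $F$ and, by bounded distortion, has length at least $K^{-1}(\leb^1(L^j)/\leb^1(C))\,r_n$. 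Therefore the relative measure of $[x_0-r_n,x_0+r_n]\setminus F$ stays bounded below by the positive constant $K^{-1}\leb^1(L^j)/(2\leb^1(C))$ as $r_n\to0$, contradicting that $x_0$ is a density point of $F$. Hence $F$ must be an interval, completing the dichotomy.

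I expect the main obstacle to be the bounded distortion estimate and its clean use: everything hinges on the relative length of the immediate gap inside a cylinder being uniformly bounded below, independently of the arbitrarily long word $\om$, and this uniformity is precisely what the H\"older control on $\phi_\om'$ buys. The tiling lemma and the selection of shrinking cylinders around the density point are then routine, provided one is careful that the OSC gives $F\cap\phi_\om C=\phi_\om F$, so that the exhibited gaps genuinely avoid $F$.
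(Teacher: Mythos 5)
Your proof is correct, but it takes a genuinely different route from the paper's. The paper's own argument is two lines: order the maps so that $\phi_1(a)<\dots<\phi_N(a)$ for $X=[a,b]$; if all adjacent images $\phi_i(X)$ and $\phi_{i+1}(X)$ meet, then $F=[a,b]$, and otherwise the measure-zero conclusion is delegated entirely to a citation (Proposition 4.4 of Mauldin--Urba\'nski). You instead give a self-contained proof of exactly that cited fact in the one-dimensional setting: the dichotomy ``no primary gap $\Rightarrow$ the cylinders $\phi_{\om}\langle F\rangle$ tile $\langle F\rangle$ with endpoints in $F$, so $F=\langle F\rangle$'' versus ``a primary gap $L^j$ of positive length $\Rightarrow$ every cylinder $\phi_{\om}\langle F\rangle$ contains the gap $\phi_{\om}(L^j)$ of relative length at least $K^{-1}\lvert L^j\rvert/\lvert\langle F\rangle\rvert$ by bounded distortion, killing every Lebesgue density point.'' All the supporting claims check out: $F\cap\phi_{\om}\langle F\rangle=\phi_{\om}F$ follows from the OSC because the only possible overlap of distinct same-length cylinders is a common endpoint, which is the image of an endpoint of $\langle F\rangle$ and hence lies in $\phi_{\om}F$; the uniform distortion constant you need is the $n=0$ case ($\bd_0$) of the paper's Lemma 3.5. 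What your approach buys is independence from the external reference and an explicit quantitative reason for the measure-zero alternative; what it costs is length, and the density-point machinery, which the paper avoids entirely. Both arguments are valid proofs of the proposition.
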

\begin{proof}
	Let $\Phi\defeq\{\phi_1,\ldots,\phi_N\}$, define $X\eqdef [a,b]$ and assume without loss of generality that $\phi_1,\ldots,\phi_N$ are ordered such that $\phi_1(a)<\phi_2(a)<\ldots<\phi_N(a)$.
	If $\phi_i([a,b])\cap \phi_{i+1}([a,b])\neq\emptyset$ for all $i\in\{1,\ldots,N-1\}$, then clearly $F=[a,b]$.
	Now assume that there exists an $i\in\{1,\ldots,N-1\}$ such that $\phi_i([a,b])\cap \phi_{i+1}([a,b])=\emptyset$. Then Proposition 4.4 of \cite{MauldinUrbanski} gives that $F$ has zero Lebesgue measure.
\end{proof}

It turns out to be useful to view a self-conformal set on a symbolic level. For the following, we fix a cIFS $\Phi\defeq\{\phi_1,\ldots,\phi_N\}$ and let $F$ denote the self-conformal set associated with $\Phi$. We introduce the \emph{shift-space} $(\Sigma^{\infty},\sigma)$ as follows.

Let $\Sigma\defeq\{1,\ldots,N\}$ denote the \emph{alphabet} and $\Sigma^n$ the set of words of length $n\in\mathbb N$ over $\Sigma$ and denote by $\Sigma^*\defeq\bigcup_{n\in\mathbb N_0} \Sigma^n$ the set of all finite words over $\Sigma$ including the empty word $\emptyset$. 
Further, we call the set $\Sigma^{\infty}$ of infinite words over $\Sigma$ the \emph{code space}. 
The \emph{shift-map} is then defined as the map $\sigma\colon\Sigma^*\cup\Sigma^{\infty}\to\Sigma^*\cup\Sigma^{\infty}$ given by $\sigma(\om)\defeq \emptyset$ for $\om\in\{\emptyset\}\cup\Sigma^1$, $\sigma(\om_1\cdots\om_n)\defeq\om_2\cdots\om_n\in\Sigma^{n-1}$ for $\om_1\cdots\om_n\in\Sigma^n$, where $n\geq 2$ and 
$\sigma(\om_1\om_2\cdots)\defeq\om_2\om_3\cdots\in\Sigma^{\infty}$ for $\om_1\om_2\cdots\in\Sigma^{\infty}$.
For a finite word $\om\in\Sigma^*$ we let $n(\om)$ denote its length.\\

Note that $\Sigma^{\infty}$ gives a coding of the self-conformal set $F$ as can be seen as follows.
For $\om=\om_1\cdots\om_n\in\Sigma^*$ we set $\phi_{\om}\defeq\phi_{\om_1}\circ\cdots\circ\phi_{\om_n}$ and define $\phi_{\emptyset}\defeq\id\vert_{X}$ to be the identity map on $X$. For $\om=\om_1\om_2\cdots\in\Sigma^{\infty}$ and $n\in\mathbb N$ we denote the initial word by $\om\vert_n\defeq\om_1\om_2\cdots\om_n$. For each $\om=\om_1\om_2\cdots\in\Sigma^{\infty}$ the intersection $\bigcap_{n\in\mathbb N}\phi_{\om\vert_n}(X)$ contains exactly one point $x_{\om}\in F$ and gives rise to a surjection $\bij\colon{\Sigma^{\infty}}\to F,\ \om\mapsto x_{\om}$ which we call the \emph{natural code map}. 
Let $\Fun$ denote the set of points of $F$ which have a unique pre-image under $\bij$. Because of the open set condition $F\setminus\Fun$ is at most countable and thus $\Fun$ is nonempty. Moreover, $x\in\Fun$ implies $\phi_i(x)\in\Fun$ for all $i\in\Sigma$. 
The map $\bij$ allows to view points in $\Fun$ as infinite words and vice versa.
In order to have neater notation, we are often going to omit the map $\bij$. 
For example, by $\phi_{\om}(\omneu)$ we actually mean $\phi_{\om}(\bij(\omneu))$ for $\om\in\Sigma^*$ and $\omneu\in\Sigma^{\infty}$.\\

One of the key properties of a cIFS is the bounded distortion property. 
Our results require the following refinement of this property, which we could not find in this precise form in the literature. Therefore, we give a short proof of this statement. 
For a set $Y\subset\mathbb R$ let $\langle Y\rangle$ denote the convex hull of $Y$.

\begin{lem}[Bounded Distortion]\label{bd}
  There exists a sequence $(\bd_n)_{n\in\mathbb N}$ with $\bd_n>0$ for all $n\in\mathbb N$ and $\lim_{n\to\infty}\bd_n=1$ such that for all $\om,\omneu\in\Sigma^*$ and $x,y\in\langle\phi_{\om}F\rangle$ we have
  \begin{eqnarray*}
    \bd_{n(\om)}^{-1}\leq\frac{\lvert\phi_{\omneu}'(x)\rvert}{\lvert\phi_{\omneu}'(y)\rvert}\leq\bd_{n(\om)}.
  \end{eqnarray*}
\end{lem}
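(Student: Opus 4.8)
The plan is to reduce the statement to the telescoping estimate underlying the usual bounded distortion property, while carefully isolating the dependence on $n(\om)$ through the diameter of $\langle\phi_\om F\rangle$. First I would record the uniform constants furnished by the hypotheses. Since $\Phi$ consists of finitely many contractions whose derivatives are bounded away from both $0$ and $1$, set $s\defeq\max_{i\in\Sigma}\sup_X\lvert\phi_i'\rvert<1$ and $b\defeq\min_{i\in\Sigma}\inf_X\lvert\phi_i'\rvert>0$. Because each $\lvert\phi_i'\rvert$ is $\alpha$-H\"older continuous and bounded below by $b>0$, and $\ln$ is Lipschitz on $[b,\infty)$, each $\ln\lvert\phi_i'\rvert$ is $\alpha$-H\"older; let $C$ denote a common H\"older constant, which is finite as $\Sigma$ is finite.

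Next I would set up the telescoping. Writing $\omneu=\omneu_1\cdots\omneu_m$ with $m=n(\omneu)$ and applying the chain rule to $\phi_{\omneu}=\phi_{\omneu_1}\circ\cdots\circ\phi_{\omneu_m}$ gives $\phi_{\omneu}'(x)=\prod_{k=1}^m\phi_{\omneu_k}'(\xi_k)$, where $\xi_k\defeq\phi_{\omneu_{k+1}\cdots\omneu_m}(x)$ (so that $\xi_m=x$); define $\eta_k$ analogously with $y$ in place of $x$. All these points lie in $X$ since $\langle\phi_\om F\rangle\subseteq\langle F\rangle\subseteq X$. Taking logarithms of the ratio then yields
\[
\Bigl\lvert\ln\tfrac{\lvert\phi_{\omneu}'(x)\rvert}{\lvert\phi_{\omneu}'(y)\rvert}\Bigr\rvert\le\sum_{k=1}^m\bigl\lvert\ln\lvert\phi_{\omneu_k}'(\xi_k)\rvert-\ln\lvert\phi_{\omneu_k}'(\eta_k)\rvert\bigr\rvert\le C\sum_{k=1}^m\lvert\xi_k-\eta_k\rvert^{\alpha}.
\]

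The crucial step, and the only place where $n(\om)$ enters, is the estimate of $\lvert\xi_k-\eta_k\rvert$. Since $\xi_k,\eta_k$ are images of $x,y$ under a composition of $m-k$ contractions, the mean value theorem gives $\lvert\xi_k-\eta_k\rvert\le s^{m-k}\lvert x-y\rvert$; moreover $x,y\in\langle\phi_\om F\rangle$ forces $\lvert x-y\rvert\le\diam\langle\phi_\om F\rangle=\diam\phi_\om F\le s^{n(\om)}\diam F$, again by the mean value theorem applied to $\phi_\om$ together with $\sup\lvert\phi_\om'\rvert\le s^{n(\om)}$. Summing the geometric series,
\[
\sum_{k=1}^m\lvert\xi_k-\eta_k\rvert^{\alpha}\le\lvert x-y\rvert^{\alpha}\sum_{j=0}^{m-1}s^{\alpha j}\le\frac{(\diam F)^{\alpha}}{1-s^{\alpha}}\,s^{\alpha\,n(\om)}.
\]
Defining $\bd_n\defeq\exp\!\bigl(\tfrac{C(\diam F)^{\alpha}}{1-s^{\alpha}}\,s^{\alpha n}\bigr)$ then produces a sequence with $\bd_n>0$ and $\bd_n\to1$ (since $s<1$), for which the two displays combine to give $\lvert\ln(\lvert\phi_{\omneu}'(x)\rvert/\lvert\phi_{\omneu}'(y)\rvert)\rvert\le\ln\bd_{n(\om)}$, i.e.\ exactly the asserted two-sided bound. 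The bound is uniform in $\omneu$, and the degenerate cases $\omneu=\emptyset$ (ratio equal to $1$) and $\om=\emptyset$ are covered automatically.

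I expect the only real subtlety to be the bookkeeping that keeps the geometric decay $s^{m-k}$ coming from the word $\omneu$ separate from the decay $s^{n(\om)}$ coming from the localisation of $x,y$ in $\langle\phi_\om F\rangle$: it is precisely the latter factor, which is absent from the standard uniform bounded distortion statement, that forces $\bd_{n(\om)}\to1$. A minor point to check is that $\langle\phi_\om F\rangle=\phi_\om\langle F\rangle$, so that its diameter is controlled by $\sup\lvert\phi_\om'\rvert$; this uses monotonicity of $\phi_\om$ (valid since $\lvert\phi_\om'\rvert$ is bounded away from $0$), though in $\mathbb R$ the diameter of a set and of its convex hull coincide, so even this can be bypassed.
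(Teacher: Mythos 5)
Your proof is correct and follows essentially the same route as the paper's: telescoping the chain rule, applying the $\alpha$-H\"older continuity of $\ln\lvert\phi_i'\rvert$, summing the resulting geometric series, and observing that the factor $\lvert x-y\rvert^{\alpha}\leq(\diam\langle\phi_{\om}F\rangle)^{\alpha}$ tends to $0$ as $n(\om)\to\infty$. The only cosmetic difference is that you make the decay explicit as $s^{\alpha n(\om)}(\diam F)^{\alpha}$, whereas the paper keeps it as $\max_{\om\in\Sigma^{n}}\sup_{x,y\in\langle\phi_{\om}F\rangle}\lvert x-y\rvert^{\alpha}$; both yield the same sequence $\bd_n\to1$.
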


\begin{proof}
Fix $\om\in\Sigma^n$ and let $x,y\in\langle\phi_{\om}F\rangle$ and $\omneu=\omneu_1\cdots\omneu_{n(\omneu)}\in\Sigma^*$ be arbitrarily chosen. Then
\[
\frac{\lvert\phi_{\omneu}'(x)\rvert}{\lvert\phi_{\omneu}'(y)\rvert}
\leq  \exp\big(\sum_{k=1}^{n(\omneu)}\underbrace{\big{\lvert}\ln\lvert\phi_{\omneu_k}'(\phi_{\sigma^k\omneu}(x))\rvert-\ln\lvert\phi_{\omneu_k}'(\phi_{\sigma^k\omneu}(y))\rvert\big{\rvert}}_{\eqdef A_k}\big).
\]

Since $\lvert\phi'_i\rvert$ is $\alpha$-Hölder continuous and bounded away from 0, it follows that $\ln\lvert\phi'_i\rvert$ is $\alpha$-Hölder continuous for each $i\in\{1,\ldots,N\}$. 
Let $c_i$ be the corresponding Hölder constant and set $c\defeq\max_{i\in\{1,\ldots,N\}}c_i$. Moreover, let $r<1$ be a common upper bound for the contraction ratios of the maps $\phi_1,\ldots,\phi_N$. Without loss of generality we assume that $F\subseteq[0,1]$. Then we have
\[
A_k\leq c\lvert\phi_{\sigma^k\omneu}(x)-\phi_{\sigma^k\omneu}(y)\rvert^{\alpha}
\leq c\cdot\left( r^{n(u)-k}\lvert x-y\rvert\right)^{\alpha}
\]
and thus
\[
\sum_{k=1}^{n(u)}A_k
\leq \frac{c}{1-r^{\alpha}}\lvert x-y\rvert^{\alpha}
\leq \frac{c}{1-r^{\alpha}}\max_{\om\in\Sigma^n}\sup_{x,y\in\langle\phi_{\om}F\rangle}\lvert x-y\rvert^{\alpha}
\eqdef \tilde{\bd}_n.
\]
Since $\tilde{\bd}_n$ converges to 0 as $n\to\infty$, $\bd_n\defeq\exp(\tilde{\bd}_n)$ converges to 1 as $n\to\infty$.
The estimate for the lower bound can be obtained by just interchanging the roles of $x$ and $y$.	
\end{proof}

\subsection{Perron-Frobenius Theory and the Geometric Potential Function}\label{sec:PF}

In order to provide the necessary background to define the constants in our main statement and also to set up the tools needed in the proofs we now recall some facts from the Perron-Frobenius theory. 
For $f\in\mathcal C(\Sigma^{\infty})$, $0<\alpha<1$ and $n\in\mathbb N$ define 
\begin{eqnarray*}
  \text{var}_n(f)&\defeq&\sup\{\lvert f(\om)-f(\omneu)\rvert\mid \om, \omneu\in\Sigma^{\infty}\ \text{and}\ \om_i=\omneu_i\ \text{for all}\ i\in\{1,\ldots,n\}\},\\
  \lvert f\rvert_{\alpha}&\defeq&\sup_{n\geq 1}\frac{\text{var}_n(f)}{\alpha^{n-1}}\ \qquad\text{and}\\
  \mathcal F_{\alpha}(\Sigma^{\infty})&\defeq&\{f\in\mathcal C(\Sigma^{\infty})\mid \lvert f\rvert_{\alpha}<\infty\}.
\end{eqnarray*}
Elements of $\mathcal F_{\alpha}(\Sigma^{\infty})$ are called \emph{$\alpha$-Hölder continuous} functions on $\Sigma^{\infty}$.
For $f\in\mathcal{C}(\Sigma^{\infty})$ define the \emph{Perron-Frobenius operator} $\mathcal L_f\colon\mathcal C(\Sigma^{\infty})\to\mathcal C(\Sigma^{\infty})$ by
\begin{eqnarray}\label{PerronFrobenius}
	\mathcal L_f \psi(x)\defeq\sum_{y\colon\sigma y=x} \ee^{f(y)}\psi(y)
\end{eqnarray}
for $x\in\Sigma^{\infty}$ and let $\mathcal{L}_{ f}^*$ be the dual of $\mathcal{L}_{ f}$ acting on the set of Borel probability measures on $\Sigma^{\infty}$.
By Theorem 2.16 and Corollary 2.17 of \cite{Walters_convergence} and Theorem 1.7 of \cite{Bowen_equilibrium}, for each real valued Hölder continuous $f\in\mathcal F_{\alpha}(\Sigma^{\infty})$, some $0<\alpha<1$, there exists a unique Borel probability measure $\nu_{ f}$ on $\Sigma^{\infty}$ such that $\mathcal L_{ f}^*\nu_{ f}=\eigenv_{ f}\nu_{ f}$ for some $\eigenv_f>0$. Moreover, $\eigenv_f$ is uniquely determined by this equation and satisfies $\eigenv_f=\exp(P(f))$. Here $P\colon\mathcal C(\Sigma^{\infty})\to\mathbb R$ denotes the \emph{topological pressure function} which for $\psi\in\mathcal C(\Sigma^{\infty})$ is defined by
\[
P(\psi)\defeq\lim_{n\to\infty}n^{-1}\ln\sum_{\om\in\Sigma^n}\exp\sup_{\omneu\in[\om]}\sum_{k=0}^{n-1}\psi\circ\sigma^k(\omneu),
\]
(see Lemma 1.20 of \cite{Bowen_equilibrium}), where $[\om]\defeq\{\omneu\in\Sigma^{\infty}\mid \omneu_i=\om_i\ \text{for}\ 1\leq i\leq n(\om)\}$ is the \emph{$\om$-cylinder set}.

Further, there exists a unique strictly positive eigenfunction $h_{f}$ of $\mathcal L_f$ satisfying $\mathcal L_f h_f=\eigenv_f h_f$. We take $h_{ f}$ to be normalised so that $\int h_{ f}\textup{d}\nu_{ f}=1$. By $\mu_f$ we denote the $\sigma$-invariant probability measure defined by $\frac{\textup{d}\mu_{ f}}{\textup{d}\nu_{ f}}=h_{ f}$. This is the unique $\sigma$-invariant Gibbs measure for the potential function $f$. 
Additionally, under some normalisation assumptions we have convergence of the iterates of the Perron-Frobenius operator to the projection onto its eigenfunction $\eigenf_f$. To be more precise we have
\begin{eqnarray}\label{eq:convergenceperron}
	\lim_{m\to\infty}\|\eigenv_f^{-m}\mathcal L_f^m\psi - \textstyle{\int}\psi\textup{d}\nu_f\cdot \eigenf_f\|=0\quad\forall\ \psi\in\mathcal C(\Sigma^{\infty}),
\end{eqnarray}
where $\|\cdot\|$ denotes the supremum-norm on $\mathcal C(\Sigma^{\infty})$.
	The results on the Perron-Frobenius operator quoted above originate mainly from the work of \cite{Ruelle_gas}.

A central object of our investigations is the geometric potential function associated with the cIFS $\Phi$ and its property of being lattice or nonlattice, which we now define.

\begin{defn}\label{def:nonlattice}
	Two functions $f_1,f_2\in\mathcal C(\Sigma^{\infty})$ are called \emph{cohomologous}, if there exists a $\psi\in\mathcal C(\Sigma^{\infty})$ such that $f_1-f_2=\psi-\psi\circ\sigma$. A function $f\in\mathcal C(\Sigma^{\infty})$ is said to be a \emph{lattice} function, if $f$ is cohomologous to a function whose range is contained in a discrete subgroup of $\mathbb R$. Otherwise, we say that $f$ is a \emph{nonlattice} function.
\end{defn}

The notion of being lattice or not carries over to $\Phi$ and its self-conformal set $F$ by considering the geometric potential function associated with $\Phi$:

\begin{defn}\label{def:nonlatticefractal}
  Fix a cIFS $\Phi\defeq\{\phi_1,\ldots,\phi_N\}$. Denote by $F$ the self-conformal set associated with $\Phi$ and let $\Sigma^{\infty}$ be the associated code space. Define the \emph{geometric potential function} to be the map $\xi\colon\Sigma^{\infty}\to\mathbb R$ given by $\xi(\om)\defeq -\ln\lvert\phi_{\om_1}'(\sigma\om)\rvert$ for $\om=\om_1\om_2\cdots\in\Sigma^{\infty}$. If $\xi$ is nonlattice, then we call $\Phi$ (and also $F$) \emph{nonlattice}. On the other hand, if $\xi$ is a lattice function, then we call $\Phi$ (and also $F$) \emph{lattice}.
\end{defn}

\begin{rmk}\label{rmk:alpha}
	The geometric potential function $\xi$ associated with a cIFS $\Phi\defeq\{\phi_1,\ldots,\phi_N\}$ satisfies $\xi\in\mathcal F_{\tilde{\alpha}}(\Sigma^{\infty})$ for some $\tilde{\alpha}\in(0,1)$. To see this, we let $r<1$ be a common upper bound for the contraction ratios of $\phi_1,\ldots,\phi_N$. Because of the $\alpha$-Hölder continuity of $\phi'_1,\ldots,\phi'_N$ we obtain that there exists a constant $c\in\mathbb R$ such that for every $n\in\mathbb N$ we have $\text{var}_n(\xi)\leq cr^{\alpha(n-1)}$. Thus, $\xi\in\mathcal F_{\tilde{\alpha}}(\Sigma^{\infty})$, where $\tilde{\alpha}\defeq r^{\alpha}\in(0,1)$.
\end{rmk}

For the geometric potential function $\xi\in\mathcal{C}(\Sigma^{\infty})$ it can be shown that the \emph{measure theoretical entropy} $\entro_{\mu_{-\mdim\xi}}$ of the shift-map $\sigma$ with respect to $\mu_{-\mdim \xi}$ is given by
\begin{eqnarray}\label{entropy}
	\entro_{\mu_{-\mdim\xi}}=\mdim\int_{\Sigma^{\infty}}\xi\textup{d}\mu_{-\mdim \xi},
\end{eqnarray}
where $\mdim$ denotes the Minkowski dimension of $F$.
This observation follows for example from the variational principle, Theorem 1.22 of \cite{Bowen_equilibrium} and the following result of \cite{Bedford} which will also be needed in the proof of \cref{curvatureresult}.

\begin{thm}\label{thBedford}
	The Minkowski as well as the Hausdorff dimension of $F$ is equal to the unique real number $t>0$ such that $P(-t\xi)=0$, where $P$ denotes the topological pressure function. 
\end{thm}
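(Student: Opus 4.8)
The plan is to prove the stronger quantitative statement that both dimensions equal Bowen's root through uniform estimates on the diameters of cylinder images. First I would record that $\xi$ is strictly positive and bounded away from $0$, since the contraction ratios $\lvert\phi_i'\rvert$ are bounded away from $1$; hence, by monotonicity and convexity of the pressure, $t\mapsto P(-t\xi)$ is continuous and strictly decreasing, with $P(0)=\ln N>0$ and $P(-t\xi)\to-\infty$ as $t\to\infty$. This already yields a unique $s>0$ with $P(-s\xi)=0$, and it suffices to show $\dim_H F=\overline{\dim}_M F=\underline{\dim}_M F=s$. The geometric link is the chain rule: writing $S_n\xi(\om)\defeq\sum_{k=0}^{n-1}\xi(\sigma^k\om)=-\ln\lvert\phi_{\om\vert_n}'(\bij\sigma^n\om)\rvert$, \cref{bd} gives $\diam(\phi_\om F)\asymp\sup_{[\om]}\ee^{-S_n\xi}$ uniformly in $\om\in\Sigma^n$ and $n\in\mathbb N$.

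With this comparison the pressure rewrites as $P(-t\xi)=\lim_n n^{-1}\ln\sum_{\om\in\Sigma^n}(\diam\phi_\om F)^t$, so $P(-s\xi)=0$ forces subexponential growth of $\sum_{\om\in\Sigma^n}(\diam\phi_\om F)^s$. The crucial upgrade is to a uniform two-sided bound. Since $P(-s\xi)=0$, the eigenvalue is $\eigenv_{-s\xi}=\exp(P(-s\xi))=1$, and applying \eqref{eq:convergenceperron} to $\psi\equiv\mathbf 1$ shows $\mathcal L_{-s\xi}^n\mathbf 1\to\eigenf_{-s\xi}$ uniformly. As $\eigenf_{-s\xi}$ is continuous and strictly positive on the compact space $\Sigma^{\infty}$, and $\mathcal L_{-s\xi}^n\mathbf 1(x)=\sum_{\om\in\Sigma^n}\ee^{S_n(-s\xi)(\om x)}\asymp\sum_{\om\in\Sigma^n}(\diam\phi_\om F)^s$ by \cref{bd}, I obtain constants $0<c_1\leq c_2<\infty$ with $c_1\leq\sum_{\om\in\Sigma^n}(\diam\phi_\om F)^s\leq c_2$ for all $n$. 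Via the multiplicativity of derivatives along words and \cref{bd}, the same bound transfers to every finite maximal antichain (cut set) $W(\e)\defeq\{\om\in\Sigma^*:\diam(\phi_\om F)\leq\e<\diam(\phi_{\om\vert_{n(\om)-1}}F)\}$, on which moreover $\diam\phi_\om F\in(c\e,\e]$ for a fixed $c>0$.

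For the Hausdorff dimension the upper bound is immediate: the cover $\{\phi_\om F\}_{\om\in\Sigma^n}$ of $F$ satisfies $\sum(\diam\phi_\om F)^s\leq c_2$, whence $\mathcal H^s(F)<\infty$ and $\dim_H F\leq s$. For the lower bound I would invoke the $s$-conformal measure $\nu$ of \eqref{conformalmeasure}, whose defining relation together with \cref{bd} gives $\nu(\phi_\om F)\asymp(\diam\phi_\om F)^s$; the open set condition (bounded overlap of comparably sized cylinders) then yields $\nu(B(x,\e))\lesssim\e^s$, and the mass distribution principle gives $\dim_H F\geq s$.

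For the Minkowski dimension I would work with the cut sets $W(\e)$: since $\diam\phi_\om F\in(c\e,\e]$ there, the two-sided bound forces $\card W(\e)\asymp\e^{-s}$. As $F_\e=\bigcup_{\om\in W(\e)}(\phi_\om F)_\e$ and each $(\phi_\om F)_\e$ has length at most $\diam\phi_\om F+2\e\leq3\e$, one gets $\leb^1(F_\e)\lesssim\e\,\card W(\e)\lesssim\e^{1-s}$, hence $\overline{\dim}_M F\leq s$; conversely the primary gaps supplied by the open set condition keep a definite proportion of the images $\phi_\om F$, $\om\in W(\e)$, mutually $\gtrsim\e$-separated, so $\leb^1(F_\e)\gtrsim\e^{1-s}$ and $\underline{\dim}_M F\geq s$, showing that $\dim_M F$ exists and equals $s$. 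The hard part will be the upgrade in the second paragraph, from the bare subexponential estimate that pressure zero delivers to the uniform two-sided Gibbs bound and its transfer to cut sets, which is precisely where the Perron--Frobenius eigenfunction and \cref{bd} are indispensable; the two lower bounds are the other delicate points, since both require the open set condition to control overlaps (for $\nu$) and to guarantee separation (for $\leb^1(F_\e)$).
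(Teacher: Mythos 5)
The paper does not prove this statement at all: it is imported verbatim as a result of Bedford \cite{Bedford} (see the sentence preceding the theorem), so there is no in-paper argument to compare against. Your proposal is a self-contained proof along the standard Bowen--Ruelle--Mauldin--Urba\'nski lines, and its architecture is sound: uniqueness of the root from strict monotonicity of $t\mapsto P(-t\xi)$, the comparison $\diam(\phi_{\om}F)\asymp\ee^{-\sup_{[\om]}S_n\xi}$ via \cref{bd}, the upgrade from $P(-s\xi)=0$ to the two-sided Gibbs bound via \eqref{eq:convergenceperron}, and then covering/mass-distribution arguments for $\dim_H$ and cut-set counting for $\dim_M$. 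Two points deserve care. First, you cannot literally ``invoke the $s$-conformal measure of \eqref{conformalmeasure}'': that display defines the $\mdim$-conformal measure with $\mdim$ already taken to be the Minkowski dimension, which is exactly what you are trying to identify, so quoting it is circular. You must instead build the measure you need from the Perron--Frobenius eigenmeasure $\nu_{-s\xi}$ (which exists for the pressure root $s$ with eigenvalue $\gamma_{-s\xi}=\ee^{P(-s\xi)}=1$), push it forward by $\bij$, and derive $\nu(\phi_{\om}F)\asymp(\diam\phi_{\om}F)^s$ from the Gibbs property together with the OSC; this is routine but is the correct logical order. Second, both lower bounds and the transfer of the Gibbs bound to the cut sets $W(\e)$ rest on a bounded-overlap statement --- that a ball of radius $\e$ meets a uniformly bounded number of the sets $\phi_{\om}F$, $\om\in W(\e)$ --- which in dimension one follows from the OSC with feasible open set $\textup{int}X$ but should be stated and used explicitly (your appeal to ``primary gaps'' for the Minkowski lower bound additionally presupposes $\leb^1(F)=0$; the interval case is trivial but should be separated out, as in \Cref{intervalfractal}). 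With those details filled in, your argument is a valid replacement for the citation; what the paper's route buys is brevity, while yours makes visible exactly which ingredients (bounded distortion, the eigenfunction bound, the OSC) the identification of the two dimensions actually consumes.
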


In what follows, we fix a cIFS $\Phi\defeq\{\phi_1,\ldots,\phi_N\}$ acting on $X$ and let $\alpha>0$ denote the common Hölder exponent of $\phi'_1,\ldots,\phi'_N$. By $\mdim$ we denote the Minkowski dimension of the self-conformal set $F$ associated with $\Phi$ and by $\xi$ its geometric potential function. We are going to show that the eigenfunction $\eigenf_{-\mdim\xi}$ of the Perron-Frobenius operator $\mathcal{L}_{-\mdim\xi}$, which is defined on the code space $\Sigma^{\infty}$, can be extended to an $\alpha$-Hölder continuous function on $X$. For this we let $\mathcal{C}(X)$ denote the set of real valued continuous functions on $X$ and define the operator $\widetilde{\mathcal L}\colon\mathcal C(X)\to\mathcal C(X)$ by
\[
	\tilde{\mathcal L}(g)\defeq\sum_{i=1}^{N}\lvert\phi'_i\rvert^{\mdim}\cdot g\circ\phi_i.
\]
We remark that $\tilde{\mathcal L}$ acts continuously on $\mathcal C(\Sigma^{\infty})$ and that $\tilde{\mathcal L}$ is an extended version of the Perron-Frobenius operator given in (\ref{PerronFrobenius}) to functions which are defined on $X$. 
We let $\mathcal F_{\alpha}(X)$ denote the set of real valued $\alpha$-Hölder continuous functions on $X$.
\begin{thm}\label{UrbanskiFortsetzung}
	Let $\nu$ be the $\mdim$-conformal measure and $\xi$ the geometric potential function associated with the cIFS $\Phi\defeq\{\phi_1,\ldots,\phi_N\}$. Assume that $\Phi$ satisfies the OSC with feasible open set $\textup{int}X$. Let $F$ denote the self-conformal set associated with $\Phi$ and let $\mdim$ be its Minkowski dimension. Denote by $\bij$ the natural code map and by $\alpha$ the Hölder exponent of the functions $\phi'_1,\ldots,\phi'_N$. Then there exists a unique $\eigenf\in\mathcal{F}_{\alpha}(X)$ such that 
	\[
		\tilde{\mathcal L}\eigenf=\eigenf,\quad \int\eigenf\textup{d}\nu=1\quad\text{and}\quad \eigenf\vert_F\circ\bij=\eigenf_{-\mdim\xi},
	\]
	where $\eigenf_{-\mdim\xi}\in\mathcal C(\Sigma^{\infty})$ is the unique eigenfunction of $\mathcal L_{-\mdim\xi}$ corresponding to the eigenvalue 1.
\end{thm}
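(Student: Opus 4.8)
The plan is to realise the sought function $\eigenf$ as a limit of iterates of the extended operator $\tilde{\mathcal L}$ applied to the constant function $1$, transporting the eigendata of $\mathcal L_{-\mdim\xi}$ from the code space to $X$ through the code map $\bij$. The starting point is the conjugacy
\[
\tilde{\mathcal L}(g)\vert_F\circ\bij=\mathcal L_{-\mdim\xi}\big(g\vert_F\circ\bij\big)\qquad\text{for all }g\in\mathcal C(X),
\]
which follows from $\bij(i\om)=\phi_i(\bij\om)$ and $\ee^{-\mdim\xi(i\om)}=\lvert\phi_i'(\bij\om)\rvert^{\mdim}$, where $i\om$ denotes prepending the symbol $i$. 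Since $\eigenv_{-\mdim\xi}=\ee^{P(-\mdim\xi)}=1$ by \cref{thBedford}, the symbolic eigenfunction is a genuine fixed point, $\mathcal L_{-\mdim\xi}\eigenf_{-\mdim\xi}=\eigenf_{-\mdim\xi}$. Iterating the conjugacy with $g=1$ gives the explicit formulae $\tilde{\mathcal L}^m1(x)=\sum_{\om\in\Sigma^m}\lvert\phi_\om'(x)\rvert^{\mdim}$ and $\tilde{\mathcal L}^m1\vert_F\circ\bij=\mathcal L_{-\mdim\xi}^m1$, which I will exploit.

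First I would establish two uniform estimates for $\{\tilde{\mathcal L}^m1\}_m$ on $X$. The computation in the proof of \cref{bd} in fact yields a constant $C$ with $\big\lvert\ln\lvert\phi_\om'(x)\rvert-\ln\lvert\phi_\om'(x')\rvert\big\rvert\le C\lvert x-x'\rvert^{\alpha}$ for every $\om\in\Sigma^*$ and all $x,x'\in X$; setting $C_0\defeq C(\diam X)^{\alpha}$ and $D\defeq\ee^{C_0}$ this gives $D^{-\mdim}\le\lvert\phi_\om'(x)\rvert^{\mdim}/\lvert\phi_\om'(y)\rvert^{\mdim}\le D^{\mdim}$ for all $x,y\in X$. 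Summing over $\om\in\Sigma^m$ and comparing with $\mathcal L_{-\mdim\xi}^m1$ (uniformly bounded, being uniformly convergent by (\ref{eq:convergenceperron})) bounds $\sup_m\lVert\tilde{\mathcal L}^m1\rVert\eqdef B<\infty$ from above, and from below by a positive constant. For the Hölder bound I would use the elementary inequality $\lvert a^{\mdim}-b^{\mdim}\rvert\le\mdim\,\ee^{\mdim C_0}\,a^{\mdim}\,\lvert\ln(b/a)\rvert$ (valid once $\lvert\ln(b/a)\rvert\le C_0$) with $a=\lvert\phi_\om'(x)\rvert$, $b=\lvert\phi_\om'(x')\rvert$, which upon summation gives $\lvert\tilde{\mathcal L}^m1(x)-\tilde{\mathcal L}^m1(x')\rvert\le B\,\mdim\,\ee^{\mdim C_0}C\,\lvert x-x'\rvert^{\alpha}$, a bound independent of $m$. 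Hence the Cesàro averages $\eigenf_M\defeq M^{-1}\sum_{m=1}^M\tilde{\mathcal L}^m1$ are uniformly bounded and equi-$\alpha$-Hölder, so Arzelà--Ascoli yields a subsequence converging uniformly to some $\eigenf\in\mathcal F_{\alpha}(X)$. As $\lVert\tilde{\mathcal L}\eigenf_M-\eigenf_M\rVert=M^{-1}\lVert\tilde{\mathcal L}^{M+1}1-\tilde{\mathcal L}1\rVert\le 2B/M\to0$ and $\tilde{\mathcal L}$ is bounded on $\mathcal C(X)$, the limit satisfies $\tilde{\mathcal L}\eigenf=\eigenf$.

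Next I would pin down the restriction, the normalisation, and uniqueness. By the conjugacy, $\eigenf_M\vert_F\circ\bij=M^{-1}\sum_{m=1}^M\mathcal L_{-\mdim\xi}^m1\to\eigenf_{-\mdim\xi}$, since $\mathcal L_{-\mdim\xi}^m1\to\eigenf_{-\mdim\xi}$ uniformly by (\ref{eq:convergenceperron}) (using $\int1\,\textup{d}\nu_{-\mdim\xi}=1$); hence $\eigenf\vert_F\circ\bij=\eigenf_{-\mdim\xi}$. For the normalisation I would note that $\bij_*\nu_{-\mdim\xi}$ is an eigenmeasure of the dual of $\tilde{\mathcal L}$ with eigenvalue $1$ satisfying the defining relations (\ref{conformalmeasure}), so by uniqueness of the $\mdim$-conformal measure $\bij_*\nu_{-\mdim\xi}=\nu$; therefore $\int\eigenf\,\textup{d}\nu=\int\eigenf\vert_F\circ\bij\,\textup{d}\nu_{-\mdim\xi}=\int\eigenf_{-\mdim\xi}\,\textup{d}\nu_{-\mdim\xi}=1$, and positivity of $\eigenf$ follows from the lower bound above. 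For uniqueness, if $\eigenf_1,\eigenf_2\in\mathcal F_\alpha(X)$ satisfy all three conditions then $\eigenf_1-\eigenf_2$ vanishes on $F$, and iterating the fixed-point relation gives
\[
(\eigenf_1-\eigenf_2)(x)=\sum_{\om\in\Sigma^m}\lvert\phi_\om'(x)\rvert^{\mdim}\,(\eigenf_1-\eigenf_2)(\phi_\om(x));
\]
since $\diam\phi_\om(X)\to0$ uniformly in $\om\in\Sigma^m$, each $\phi_\om(x)$ is within a vanishing distance of $F$, so uniform continuity of $\eigenf_1-\eigenf_2$ and its vanishing on $F$ bound the right-hand side by $\eta_m\,\tilde{\mathcal L}^m1(x)\le\eta_m B\to0$, forcing $\eigenf_1=\eigenf_2$.

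I expect the main obstacle to be the \emph{uniform in $m$} $\alpha$-Hölder estimate for $\tilde{\mathcal L}^m1$, which rests on extracting the summable log-derivative distortion bound from the proof of \cref{bd}, together with the careful transfer of the symbolic eigendata to $X$ via $\bij$—specifically verifying the operator conjugacy and the measure identity $\bij_*\nu_{-\mdim\xi}=\nu$, which are precisely what tie the geometric fixed point on $X$ to the symbolic eigenfunction and fix its normalisation.
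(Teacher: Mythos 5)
Your proposal is correct and follows essentially the same route as the paper: an accumulation point (via Arzel\`a--Ascoli) of the Ces\`aro averages of $\tilde{\mathcal L}^m 1$, with the uniform $\alpha$-H\"older bound extracted from the bounded distortion estimate exactly as in the paper's proof. The only difference is that you supply self-contained arguments for the uniform boundedness, the identity $\eigenf\vert_F\circ\bij=\eigenf_{-\mdim\xi}$, and the uniqueness, which the paper instead imports from Lemma 6.1.1 and Theorem 6.1.2 of \cite{Urbanski_Buch}; your direct uniqueness argument via $h(x)=\sum_{\om\in\Sigma^m}\lvert\phi_\om'(x)\rvert^{\mdim}h(\phi_\om(x))$ and the shrinking of the cylinders onto $F$ is sound.
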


\begin{proof}
	We let $1$ denote the constant one-function on $X$. By Lemma 6.1.1 of \cite{Urbanski_Buch} the sequence $(\tilde{\mathcal L}^n(1))_{n\in\mathbb N}$ is uniformly bounded and equicontinuous and thus so is the sequence $(n^{-1}\sum_{i=0}^{n-1}\tilde{\mathcal L}^i(1))_{n\in\mathbb N}$. Therefore, by Arzelà-Ascoli, the sequence of averages exhibits an accumulation point which we denote by $\eigenf$.
	Obviously $\tilde{\mathcal L}\eigenf=\eigenf$ and $\int\eigenf\textup{d}\nu=1$.
	
	In order to show that $\eigenf\in\mathcal{F}_{\alpha}(X)$, it suffices to show that $f_n\defeq n^{-1}\sum_{i=0}^{n-1}\tilde{\mathcal L}^i(1)$ is $\alpha$-Hölder continuous for every $n\in\mathbb N$ and that the Hölder constants are uniformly bounded. For that we let $x,y\in X$.
	\begin{eqnarray*}
		&&\left\lvert f_n(x)-f_n(y)\right\rvert
		= \left\lvert n^{-1}\sum_{i=0}^{n-1}\sum_{\om\in\Sigma^i}\lvert\phi'_{\om}(x)\rvert^{\mdim}-\lvert\phi'_{\om}(y)\rvert^{\mdim}\right\rvert\\
		&&\quad\leq n^{-1}\sum_{i=0}^{n-1}\sum_{\om\in\Sigma^i}\left\lvert \exp\left(\mdim\sum_{k=1}^i\ln\lvert\phi'_{\om_k}(\phi_{\sigma^k\om}x)\rvert\right) -\exp\left(\mdim\sum_{k=1}^i\ln\lvert\phi'_{\om_k}(\phi_{\sigma^k\om}y)\rvert\right)\right\rvert.
	\end{eqnarray*}
	By hypotheses, $\ln\lvert\phi'_i\rvert$ is $\alpha$-Hölder continuous for every $i\in\{1,\ldots,N\}$. Let $c_1,\ldots,c_N$ denote the respective Hölder constants of $\ln\lvert\phi'_1\rvert,\ldots,\ln\lvert\phi'_N\rvert$, set $c\defeq\max_{i=1,\ldots,N}c_i$ and let $r<1$ be a common upper bound for the contraction ratios of $\phi_1,\ldots,\phi_N$. Applying the Mean Value Theorem to $\exp$ and letting $\theta_{\om}$ denote the mean value corresponding to the $\om$-summand, we obtain the following set of inequalities.
	\begin{eqnarray*}
		\left\lvert f_n(x)-f_n(y)\right\rvert
		&\leq&n^{-1}\sum_{i=0}^{n-1}\sum_{\om\in\Sigma^i}\ee^{\theta_{\om}}\cdot \mdim\sum_{k=1}^i c\lvert\phi_{\sigma^k\om}x-\phi_{\sigma^k\om}y\rvert^{\alpha}\\
		&\leq&n^{-1}\sum_{i=0}^{n-1}\sum_{\om\in\Sigma^i}\ee^{\theta_{\om}}\cdot \frac{\mdim c}{1-r^{\alpha}}\lvert x-y\rvert^{\alpha}.				 
	\end{eqnarray*}
	Since $\theta_{\om}$ lies between $\ln\lvert\phi'_{\om}(x)\rvert^{\mdim}$ and $\ln\lvert\phi'_{\om}(y)\rvert^{\mdim}$, there exists a $\tilde{\theta}_{\om}\in\mathbb R$ such that $\lvert\phi'_{\om}(\tilde{\theta}_{\om})\rvert^{\mdim}=\ee^{\theta_{\om}}$. By definition of the $\mdim$-conformal measure it can be easily seen that $\lvert\phi'_{\om}(\tilde{\theta}_{\om})\rvert^{\mdim}\leq\bd_0\nu(\phi_{\om}F)$. Thus, 
	\begin{eqnarray*}
		\left\lvert f_n(x)-f_n(y)\right\rvert
		&\leq&\underbrace{\frac{\bd_0\mdim c}{1-r^{\alpha}}}_{\eqdef\tilde{c}}\lvert x-y\rvert^{\alpha}.
	\end{eqnarray*}
	Hence the Hölder constant of each function $f_n$ is bounded by $\tilde c$. The uniqueness of $\eigenf$ and $\eigenf\vert_F\circ\bij=\eigenf_{-\mdim\xi}$ follow from Theorem 6.1.2 of \cite{Urbanski_Buch}.
\end{proof}

\subsection{Renewal Theory and Geometric Measure Theory}

In the proof of \cref{curvatureresult} we are going to make use of a renewal theory argument for counting measures in symbolic dynamics. For this we first fix the following notation.

For a map $f\colon\Sigma^{\infty}\to\mathbb R$ and $n\in\mathbb N$ define the \emph{$n$-th ergodic sum} to be $S_n f\defeq\sum_{k=0}^{n-1} f\circ\sigma^k$ and $S_0 f\defeq 0$.
Moreover, we call a function $f_1\colon(0,\infty)\to\mathbb R$ \emph{asymptotic} to a function $f_2\colon(0,\infty)\to\mathbb R$ as $\e\to 0$, in symbols $f_1(\e)\sim f_2(\e)$ as $\e\to 0$, if $\lim_{\e\to 0}f_1(\e)/f_2(\e)=1$. Similarly, we say that $f_1$ is \emph{asymptotic} to $f_2$ as $t\to\infty$, in symbols $f_1(t)\sim f_2(t)$ as $t\to\infty$, if $\lim_{t\to \infty}f_1(t)/f_2(t)=1$.

The following proposition is a well-known fact which is for example stated in Proposition 2.1 of \cite{Lalley}.
\begin{proposition}\label{prop:sunique}
	Let $f\in\mathcal F_{\alpha}(\Sigma^{\infty})$ for some $0<\alpha<1$ be such that for some $n\geq 1$ the function $S_n f$ is strictly positive on $\Sigma^{\infty}$. Then there exists a unique $s>0$ such that 
	\begin{eqnarray}\label{sunique}
		\eigenv_{-sf}=1.
	\end{eqnarray}
\end{proposition}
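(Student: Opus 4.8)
The plan is to reformulate the equation $\eigenv_{-sf}=1$ in terms of the topological pressure and then exploit the monotonicity and boundary behaviour of the resulting function of $s$. Since $\eigenv_g=\exp(P(g))$ for every real-valued $g\in\mathcal F_{\alpha}(\Sigma^{\infty})$, as recalled above, the equation $\eigenv_{-sf}=1$ is equivalent to $P(-sf)=0$. Thus it suffices to show that $s\mapsto P(-sf)$ has a unique positive zero. Throughout I would work with the function $p\colon\mathbb R\to\mathbb R$, $p(s)\defeq P(-sf)$, and analyse it by means of the variational principle $P(g)=\sup_{\mu}\bigl(\entro_{\mu}+\int g\,\textup{d}\mu\bigr)$ (Theorem 1.22 of \cite{Bowen_equilibrium}), where the supremum ranges over all $\sigma$-invariant Borel probability measures $\mu$.

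First I would record a uniform positive lower bound on the integrals $\int f\,\textup{d}\mu$. By hypothesis $S_n f$ is strictly positive, and since $\Sigma^{\infty}$ is compact and $S_n f$ is continuous, there is an $\eta>0$ with $S_n f\geq n\eta$ on $\Sigma^{\infty}$. For any $\sigma$-invariant probability measure $\mu$, invariance gives $\int S_n f\,\textup{d}\mu=n\int f\,\textup{d}\mu$, whence $\int f\,\textup{d}\mu\geq\eta>0$ uniformly in $\mu$.

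Next I would establish three structural properties of $p$. \emph{Strict monotonicity}: for $s_1<s_2$ the variational principle yields $p(s_2)\leq p(s_1)-(s_2-s_1)\eta$, so $p$ is strictly decreasing. \emph{Boundary values}: at $s=0$ the defining limit gives $p(0)=P(0)=\ln N>0$, the topological entropy of the full shift on $N\geq2$ symbols, while the same estimate forces $p(s)\leq\ln N-s\eta\to-\infty$ as $s\to\infty$. \emph{Continuity}: as a supremum of affine functions of $s$, the map $p$ is convex on $\mathbb R$ and hence continuous.

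Finally, existence and uniqueness follow at once: $p$ is continuous with $p(0)>0$ and $p(s)\to-\infty$, so the intermediate value theorem supplies a zero $s>0$, and strict monotonicity guarantees it is unique. I expect the only genuinely delicate point to be the passage from pointwise positivity of $S_n f$ to the uniform bound $\int f\,\textup{d}\mu\geq\eta$, which is what simultaneously drives strict monotonicity and the divergence $p(s)\to-\infty$; the remaining assertions are routine consequences of the variational principle and could alternatively be quoted directly from the standard thermodynamic-formalism references such as \cite{Bowen_equilibrium}.
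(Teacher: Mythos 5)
Your argument is correct. The paper itself does not prove this proposition at all: it is quoted as a well-known fact with a pointer to Proposition 2.1 of \cite{Lalley}, where the standard route is via differentiability of the pressure function, $\frac{\textup{d}}{\textup{d}s}P(-sf)=-\int f\,\textup{d}\mu_{-sf}<0$, together with the boundary behaviour of $s\mapsto P(-sf)$. Your proof replaces that analytic machinery by the variational principle alone, and every step checks out: the reduction $\eigenv_{-sf}=1\Leftrightarrow P(-sf)=0$ is exactly the identity $\eigenv_g=\exp(P(g))$ recorded in Section 3.3; the uniform bound $\int f\,\textup{d}\mu\geq\eta>0$ over all shift-invariant $\mu$ follows correctly from compactness, continuity of $S_nf$, and $\int S_nf\,\textup{d}\mu=n\int f\,\textup{d}\mu$; and the estimate $p(s_2)\leq p(s_1)-(s_2-s_1)\eta$ simultaneously gives strict decrease and $p(s)\to-\infty$, while $p(0)=\ln N>0$ uses $N\geq2$ from the standing assumptions and continuity follows from finiteness and convexity of a pointwise supremum of affine functions. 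What your approach buys is a self-contained, elementary verification requiring neither analyticity of the pressure nor existence of equilibrium states; what the citation route buys is brevity and, in Lalley's hands, additional information (smoothness of $s\mapsto P(-sf)$) that is used elsewhere in the renewal-theoretic arguments. You correctly identified the one genuinely delicate point, namely the passage from pointwise positivity of $S_nf$ to the uniform bound on $\int f\,\textup{d}\mu$.
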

The following two theorems play a crucial role in the proof of \cref{curvatureresult}. The first of the two theorems is Theorem 1 of \cite{Lalley}. The second one is a refinement and generalisation of Theorem 3 in \cite{Lalley} and hence we will give a proof.

\begin{proposition}[Lalley]\label{thmlalley}
	Assume that $f$ lies in $\mathcal F_{\alpha}(\Sigma^{\infty})$ for some $0<\alpha<1$, is nonlattice and such that for some $n\geq 1$ the function $S_n f$ is strictly positive. Let $g\in\mathcal F_{\alpha}(\Sigma^{\infty})$ be nonnegative but not identically zero and let $s>0$ be implicitly given by Equation (\ref{sunique}). Then we have that
	\[
		\sum_{n=0}^{\infty}\sum_{y\colon\sigma^n y=x}g(y)\mathds 1_{\{S_n f(y)\leq t\}\vp}
		\sim \frac{\int g\textup{d}\nu_{-sf}}{s\int f\textup{d}\mu_{-sf}} \eigenf_{-s f}(x)\ee^{s t}
	\]
	as $t\to\infty$ uniformly for $x\in\Sigma^{\infty}$.
\end{proposition}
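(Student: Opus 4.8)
The plan is to pass to the Laplace--Stieltjes transform of the counting sum in the variable $t$, read off its singularity structure from the Ruelle--Perron--Frobenius spectral theory of $\mathcal L_{-\lambda f}$, and then recover the exponential growth by a Tauberian theorem. Write $N(t,x)$ for the left-hand side of the claimed asymptotic. Since $g\geq 0$, the map $t\mapsto N(t,x)$ is nondecreasing, and the hypothesis that $S_nf$ is strictly positive for some $n$ forces $S_mf\to\infty$ along every branch, so that for each fixed $t$ only finitely many $m$ contribute; hence $N(\cdot,x)$ is a genuine, locally finite, nondecreasing renewal measure. For $\lambda>s$ its transform is
\[
	\widehat N(\lambda,x)\defeq\int_0^{\infty}\ee^{-\lambda t}\,\textup{d}N(t,x)=\sum_{n=0}^{\infty}\sum_{y\colon\sigma^n y=x}g(y)\ee^{-\lambda S_nf(y)}=\sum_{n=0}^{\infty}\mathcal L_{-\lambda f}^{\,n}g(x),
\]
the last identity being immediate from the definition (\ref{PerronFrobenius}) of the Perron--Frobenius operator.

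Next I would feed in the spectral theory quoted around (\ref{eq:convergenceperron}): for $\lambda$ near $s$ the operator $\mathcal L_{-\lambda f}$ is quasi-compact with a simple leading eigenvalue $\eigenv_{-\lambda f}=\ee^{P(-\lambda f)}$, strictly positive eigenfunction $\eigenf_{-\lambda f}$ and eigenmeasure $\nu_{-\lambda f}$, separated from the rest of the spectrum by a gap, so that $\mathcal L_{-\lambda f}^{\,n}g=\eigenv_{-\lambda f}^{\,n}\big(\int g\,\textup{d}\nu_{-\lambda f}\big)\eigenf_{-\lambda f}+O(\tilde\rho^{\,n})$ with $\tilde\rho<1$. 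Summing the geometric series exhibits $\widehat N(\lambda,\cdot)$ as meromorphic near $\lambda=s$ with principal part
\[
	\frac{\int g\,\textup{d}\nu_{-\lambda f}}{1-\eigenv_{-\lambda f}}\;\eigenf_{-\lambda f},
\]
the remainder staying bounded because of the spectral gap. Using $\eigenv_{-sf}=1$, i.e. $P(-sf)=0$, which is precisely (\ref{sunique}), together with the standard formula $\tfrac{\textup{d}}{\textup{d}\lambda}P(-\lambda f)=-\int f\,\textup{d}\mu_{-\lambda f}$, a first-order expansion gives $1-\eigenv_{-\lambda f}\sim(\lambda-s)\int f\,\textup{d}\mu_{-sf}$ as $\lambda\downarrow s$. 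Hence $\lambda=s$ is a simple pole of $\widehat N(\lambda,x)$ with residue $\big(\int g\,\textup{d}\nu_{-sf}\big)\big(\int f\,\textup{d}\mu_{-sf}\big)^{-1}\eigenf_{-sf}(x)$.

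Finally I would invoke a Wiener--Ikehara/Karamata Tauberian theorem, legitimate by the monotonicity of $N(\cdot,x)$, to convert this simple pole into $N(t,x)\sim s^{-1}\cdot(\text{residue})\cdot\ee^{st}$, which is exactly the asserted constant $\big(s\int f\,\textup{d}\mu_{-sf}\big)^{-1}\int g\,\textup{d}\nu_{-sf}\,\eigenf_{-sf}(x)$. The nonlattice hypothesis is decisive at this step: one must show that $\eigenv_{-(s+\ii\tau)f}\neq 1$ for every real $\tau\neq 0$, so that $\lambda=s$ is the only singularity on the critical line $\operatorname{Re}\lambda=s$ and the asymptotic acquires no oscillatory term (in the lattice case these additional poles are what produce multiplicatively periodic fluctuations). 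I expect this no-other-poles argument, together with upgrading the Tauberian conclusion to hold uniformly in $x\in\Sigma^{\infty}$, to be the main obstacle; the uniformity should follow from the facts that the spectral convergence in (\ref{eq:convergenceperron}) holds in supremum norm, that $\eigenf_{-\lambda f}$ and $\nu_{-\lambda f}$ depend on $\lambda$ real-analytically and on $x$ uniformly, and that $\eigenf_{-sf}$ is continuous on the compact space $\Sigma^{\infty}$.
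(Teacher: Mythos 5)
The paper offers no proof of this proposition at all --- it is quoted as Theorem~1 of Lalley's paper --- and your sketch is precisely the renewal-theoretic argument underlying that result: Laplace transform of the counting function, the identity with $\sum_{n}\mathcal L_{-\lambda f}^{n}g$, a simple pole at $\lambda=s$ coming from the Ruelle--Perron--Frobenius spectral gap together with the pressure-derivative formula, and a Wiener--Ikehara step in which nonlatticity rules out further singularities on the critical line. The two issues you flag as the main obstacles are exactly where the real work in Lalley's proof lies --- controlling the resolvent $(I-\mathcal L_{-(s+i\tau)f})^{-1}$ as $\lvert\tau\rvert\to\infty$ (handled there by smoothing the indicator before applying the Tauberian theorem) and upgrading the asymptotic to be uniform in $x$ --- so your outline is correct and matches the cited source, but those two steps are not free and would need to be carried out to make this a complete proof.
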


For $b\in \mathbb R$, we denote by $\lceil b\rceil$ the smallest integer which is greater than or equal to $b$, by $\lfloor b\rfloor$ the greatest integer which is less than or equal to $b$, and by $\{b\}$ the fractional part of $b$, that is $\{b\}\defeq b-\lfloor b\rfloor$. 
\begin{thm}\label{thmlalleyneu}
	Assume that $f$ lies in $\mathcal F_{\alpha}(\Sigma^{\infty})$ for some $0<\alpha<1$ and that for some $n\geq 1$ the function $S_n f$ is strictly positive. Further assume that $f$ is lattice and let $\z,\psi\in\mathcal C(\Sigma^{\infty})$ denote functions which satisfy 
	\[
		f-\z=\psi-\psi\circ\sigma,
	\]
	where $\z$ is a function whose range is contained in a discrete subgroup of $\mathbb R$. Let $\aaa>0$ be maximal such that $\z(\Sigma^{\infty})\subseteq\aaa\mathbb Z$. Further, let $g\in\mathcal F_{\alpha}(\Sigma^{\infty})$ be nonnegative but not identically zero and $s>0$ be implicitly given by Equation (\ref{sunique}). Then we have that
	
	\begin{eqnarray}
		\sum_{n=0}^{\infty}\sum_{y\colon\sigma^n y=x}g(y)\mathds 1_{\{S_n f(y)\leq t\}\vp}
			\sim \frac{\aaa\eigenf_{-s\z}(x)\int g(y)\ee^{-s\aaa\left\lceil\frac{\psi(y)-\psi(x)}{\aaa}-\frac{t}{\aaa}\right\rceil}\textup{d}\nu_{-s\z}(y)}{\left(1-\ee^{-s\aaa}\right)\int\z\textup{d}\mu_{-s\z}}	\end{eqnarray}
as $t\to\infty$ uniformly for $x\in\Sigma^{\infty}$.
\end{thm}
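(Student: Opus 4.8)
The plan is to exploit the lattice hypothesis through the coboundary identity $f-\z=\psi-\psi\circ\sigma$, which for any $y$ with $\sigma^n y=x$ telescopes to $S_n f(y)=S_n\z(y)+\psi(y)-\psi(x)$. Since the range of $\z$ lies in $\aaa\mathbb Z$, every ergodic sum $S_n\z(y)$ takes values in the lattice $\aaa\mathbb Z$, so the event $\{S_n f(y)\le t\}$ becomes $\{S_n\z(y)\le \aaa\lfloor(t-\psi(y)+\psi(x))/\aaa\rfloor\}$. First I would record the consequences of cohomology: because $-sf$ and $-s\z$ differ by a coboundary they have equal topological pressure, so $P(-s\z)=P(-sf)=0$ and hence $\eigenv_{-s\z}=1$; moreover they share the same Gibbs measure, giving $\mu_{-sf}=\mu_{-s\z}$ and $\int f\,\textup{d}\mu_{-sf}=\int\z\,\textup{d}\mu_{-s\z}$. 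At the operator level the identity $\mathcal L_{-sf}=M_{\ee^{s\psi}}\mathcal L_{-s\z}M_{\ee^{-s\psi}}$ (with $M$ denoting multiplication) conjugates the two Perron--Frobenius operators, relating their spectral data explicitly; this is what ultimately transfers the factor $\ee^{s\psi}$ out of the eigenfunction and into the final formula.

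The next step is to organise the double sum by lattice level. For $m\in\mathbb Z$ introduce the finite measures on $\Sigma^{\infty}$
\[
	\eta_m(x,\cdot)\defeq\sum_{n=0}^\infty\sum_{\substack{\sigma^n y=x\\ S_n\z(y)=\aaa m}} g(y)\,\delta_y(\cdot),
\]
so that, grouping the preimages according to the value $\aaa m=S_n\z(y)$ and using the reformulated indicator,
\[
	\sum_{n=0}^\infty\sum_{\sigma^n y=x}g(y)\mathds 1_{\{S_n f(y)\le t\}}
	=\sum_{m\in\mathbb Z}\int \mathds 1_{\{\aaa m\le t-\psi(y)+\psi(x)\}}\,\textup{d}\eta_m(x,y).
\]
Writing each summand as $\ee^{s\aaa m}$ times $\ee^{-s\aaa m}\,\textup{d}\eta_m$ exhibits the left-hand side as a lattice renewal sum whose asymptotics as $t\to\infty$ are governed by the behaviour of the renormalised measures $\ee^{-s\aaa m}\eta_m(x,\cdot)$ as $m\to\infty$.

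The heart of the proof is therefore a Markov renewal theorem on the lattice $\aaa\mathbb Z$: I would show that $\ee^{-s\aaa m}\eta_m(x,\cdot)$ converges weakly, uniformly in $x$, to the measure $\Pi(x,\textup{d}y)\defeq\frac{\aaa\,\eigenf_{-s\z}(x)}{\int\z\,\textup{d}\mu_{-s\z}}\,g(y)\,\textup{d}\nu_{-s\z}(y)$. This I would obtain by Fourier analysis in the lattice variable: testing $\ee^{-s\aaa m}\eta_m(x,\cdot)$ against a continuous $\phi$ and summing $\ee^{-\mathrm{i}\beta\aaa m}$ over $m$ turns the generating function into $\sum_{n\ge0}\mathcal L_{-(s+\mathrm{i}\beta)\z}^n(g\phi)(x)$, the resolvent of the twisted operator $\mathcal L_{-(s+\mathrm{i}\beta)\z}$. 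Here the maximality of $\aaa$ is essential: it guarantees that the leading eigenvalue $\eigenv_{-(s+\mathrm{i}\beta)\z}$ has modulus strictly less than $1$ for $\beta\in(0,2\pi/\aaa)$, so the only singularity per period sits at $\beta=0$. The first-order expansion $\eigenv_{-(s+\mathrm{i}\beta)\z}=1-\mathrm{i}\beta\int\z\,\textup{d}\mu_{-s\z}+o(\beta)$, whose derivative comes from $\tfrac{\textup{d}}{\textup{d}u}P(-u\z)=-\int\z\,\textup{d}\mu_{-u\z}$, together with the projection $\mathcal L_{-s\z}^n(g\phi)\to\eigenf_{-s\z}\int g\phi\,\textup{d}\nu_{-s\z}$ furnished by (\ref{eq:convergenceperron}), identifies the residue and, after Fourier inversion, yields exactly $\Pi$; the span $\aaa$ appears as the dual period and the mean increment $\int\z\,\textup{d}\mu_{-s\z}$ in the denominator.

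Once this renewal limit is in hand, the conclusion follows by substituting $\Pi$ for $\ee^{-s\aaa m}\eta_m$ and summing a geometric series: for fixed $y$ the admissible levels are $m\le L(y)\defeq\lfloor(t-\psi(y)+\psi(x))/\aaa\rfloor$, and $\sum_{m\le L(y)}\ee^{s\aaa m}=\ee^{s\aaa L(y)}/(1-\ee^{-s\aaa})$, whence
\[
	\sum_{n=0}^\infty\sum_{\sigma^n y=x}g(y)\mathds 1_{\{S_n f(y)\le t\}}
	\sim\frac{1}{1-\ee^{-s\aaa}}\int \ee^{s\aaa\lfloor(t-\psi(y)+\psi(x))/\aaa\rfloor}\,\textup{d}\Pi(x,y).
\]
Inserting $\Pi$ and rewriting $\ee^{s\aaa\lfloor z\rfloor}=\ee^{-s\aaa\lceil -z\rceil}$ with $z=(t-\psi(y)+\psi(x))/\aaa$ reproduces the claimed formula, and letting $\aaa\to0$ recovers the constant $1/s$ of the nonlattice \cref{thmlalley}. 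The main obstacle is the justification of this last substitution: the exploding weight $\ee^{s\aaa m}$ forces the error $\ee^{-s\aaa m}\eta_m-\Pi$ to be controlled uniformly and summably near the top level $m\approx t/\aaa$, while the contributions of levels far below the top must be shown to be negligible. Handling this, together with the $y$-dependent shift $\psi(y)$ sitting inside the level bound $L(y)$ and the required uniformity in $x$, is the delicate technical step; the spectral gap of $\mathcal L_{-s\z}$ and the uniform convergence (\ref{eq:convergenceperron}) are the tools that make the error control go through.
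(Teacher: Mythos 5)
Your proposal runs essentially the same argument as the paper, which (like you) follows Lalley's lattice renewal theory: reduce to $\z$ via the coboundary, take a Fourier--Laplace transform whose only singularity per period is the simple pole of $(I-\mathcal{L}_{z\z})^{-1}$ at $z=-s$ (maximality of $\aaa$ excluding further poles), read off the residue through $\int\z\,\textup{d}\mu_{-s\z}$ and the projection onto $\eigenf_{-s\z}$, and invert; your decomposition into level measures $\eta_m$ followed by a geometric summation is a repackaging of the paper's transform $\hat{N}^*_{\beta}(z,x)=\sum_n\ee^{nz}N^*(n+\beta,x)$ of the shifted counting function, and your direct treatment of general $\aaa$ replaces the paper's rescaling $f\mapsto\aaa^{-1}f$ after the case $\aaa=1$. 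The step you flag but do not carry out --- substituting the weak limit $\Pi$ into the exponentially weighted sum with the $y$-dependent cutoff $L(y)$, where plain weak convergence is insufficient at atoms of $\nu_{-s\z}\circ\psi^{-1}$ --- is precisely where the paper instead invokes Lalley's Tauberian argument to get $N^*(n+\beta,x)\sim C(\beta,x)\ee^{sn}$ uniformly in $x$, so the content is the same and nothing essential is missing from your outline.
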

\begin{rmk}
	\Cref{thmlalley,thmlalleyneu} are also valid in the more general situation of $(\Sigma^{\infty},\sigma)$ being a subshift of finite type. See also Theorem 3 of \cite{Lalley} where the exact asymptotic is not provided.
\end{rmk}

\begin{proof}[Proof of \cref{thmlalleyneu}]
	For the proof we first assume that $\aaa=1$, which implies that $\z$ is integer valued and not cohomologous to any function taking its values in a proper subgroup of $\mathbb Z$. 
	We first follow the lines of the proofs of Theorem 2 and Theorem 3 of \cite{Lalley} and then refine the last steps of the proof of Theorem 3 of \cite{Lalley} to obtain the exact asymptotics. 
	
	Lalley introduces the following functions for fixed $t\in\mathbb R$ and $x\in\Sigma^{\infty}$.
	\begin{eqnarray*}
		N_f(t,x)&\defeq&\sum_{n=0}^{\infty}\sum_{y\colon\sigma^n y=x}g(y)\mathds 1_{\{S_n f(y)\leq t\}\vp},\\
		N^*(t,x)&\defeq&N_f(t-\psi(x),x)
	\end{eqnarray*}
	and for $\beta\in[0,1)$ and $z\in\mathbb C$ the Fourier-Laplace transform
	\begin{eqnarray*}
		\hat{N}_{\beta}^*(z,x)\defeq\sum_{n=-\infty}^{\infty}\ee^{nz}N^*(n+\beta,x).
	\end{eqnarray*}
	It is easy to verify that $N_f(t,x)$ satisfies a renewal equation (see Equation (2.2) in \cite{Lalley})
	\begin{eqnarray*}
		N_f(t,x)=\sum_{y\colon\sigma y=x}N_f(t-f(y),y)+g(x)\mathds 1_{\{t\geq 0\}\vp}
	\end{eqnarray*}
	from which one can deduce that $\hat{N}_{\beta}^*$ satisfies the following equation.
	\begin{eqnarray}\label{Nbetahat}
		\hat{N}_{\beta}^*(z,x)=(I-\mathcal{L}_{z\z})^{-1}g(x)\frac{\ee^{z\left\lceil\phi(x)-\beta\right\rceil}}{1-\ee^z},		
	\end{eqnarray}
	where $I$ denotes the identity operator.
	We remark that Equation (\ref{Nbetahat}) differs slightly from the respective equation in \cite{Lalley}, in that Lalley obtains $z\left\lfloor\phi(x)+1-\beta\right\rfloor$ as the argument of the exponential, whereas our calculations result in $z\left\lceil\phi(x)-\beta\right\rceil$ being the right expression instead. 
	
	By arguments in the proof of Theorem 2 of \cite{Lalley} the function $z\mapsto (I-\mathcal{L}_{z\z})^{-1}g(x)$ is meromorphic in $\{z\in\mathbb C\mid 0\leq\text{Im}(z)\leq\pi,\ \text{Re}(z)<-s+\e\}$ for some $\e>0$ and the only singularity in this region is a simple pole at $z=-s$ with residue
	\begin{eqnarray*}
	\frac{\eigenf_{-s\z}(x)\int g\textup{d}\nu_{-s\z}}{\int\z\textup{d}\mu_{-s\z}}.
	\end{eqnarray*}
	Since $z\mapsto \ee^{z\left\lceil\psi(x)-\beta\right\rceil}$ and $z\mapsto(1-\ee^z)^{-1}$ are holomorphic in $\{z\in\mathbb C\mid\text{Re}(z)<0\}$ we deduce from this that $z\mapsto\hat{N}_{\beta}^*(z,x)$ is meromorphic in $\{z\in\mathbb C\mid 0\leq\text{Im}(z)\leq\pi,\ \text{Re}(z)<-s+\e\}$ for some $\e>0$ and that the only singularity in this region is a simple pole at $z=-s$ with residue
	\begin{eqnarray*}
\frac{\eigenf_{-s\z}(x)\int g(y)\ee^{-s\left\lceil\psi(y)-\beta\right\rceil}\textup{d}\nu_{-s\z}(y)}{(1-\ee^{-s})\int\z\textup{d}\mu_{-s\z}}\eqdef C(\beta,x).
	\end{eqnarray*}
	Now, again following the lines of the proof of Theorem 2 of \cite{Lalley}, it follows that
	\begin{eqnarray*}
		N^*(n+\beta,x)\sim C(\beta,x)\ee^{sn}
	\end{eqnarray*}	
	as $n\to\infty$ uniformly for $x\in\Sigma^{\infty}$. Thus for $t\in(0,\infty)$	
	\begin{eqnarray}
		N_f(t,x)
		&=& N_f(\underbrace{\left\lfloor\psi(x)+t\right\rfloor}_{\eqdef n}+\underbrace{\{\psi(x)+t\}}_{\eqdef\beta}-\psi(x),x)
		= N^*(n+\beta,x)\nonumber\\
		&\sim& C(\beta,x)\ee^{sn}
=\frac{\eigenf_{-s\z}(x)\int g(y)\ee^{-s\left\lceil\psi(y)-\psi(x)-t\right\rceil}\textup{d}\nu_{-s\z}(y)}{(1-\ee^{-s})\int\z\textup{d}\mu_{-s\z}}\label{Nftx}
	\end{eqnarray}
	as $n\to\infty$ uniformly for $x\in\Sigma^{\infty}$. 
This proves the case $\aaa=1$. 
	
	The case that $\aaa\neq 1$ is not covered in \cite{Lalley}. If $\aaa>0$ is arbitrary, then we consider the function $\aaa^{-1}f=\aaa^{-1}\z+\aaa^{-1}\psi-\left(\aaa^{-1}\psi\right)\circ\sigma$.
	Since by \Cref{prop:sunique}, $s>0$ satisfying Equation (\ref{sunique}) is the unique positive real number such that $\eigenv_{-sf}=1$, $\tilde{s}\defeq s\aaa$ is the unique positive real number satisfying $\eigenv_{-\tilde{s}\aaa^{-1}f}=1$. Therefore, Equation (\ref{Nftx}) implies
	\begin{eqnarray*}
		\sum_{n=0}^{\infty}\sum_{y\colon\sigma^n y=x}g(y)\mathds 1_{\{S_n f(y)\leq t\}\vp}
		&=& \sum_{n=0}^{\infty}\sum_{y\colon\sigma^n y=x}g(y)\mathds 1_{\{S_n \aaa^{-1}f(y)\leq t\aaa^{-1}\}\vp}\\
		&\sim& \frac{\eigenf_{-s\z}(x)\int g(y)\ee^{-s\aaa \left\lceil\frac{\psi(y)-\psi(x)}{\aaa}-\frac{t}{\aaa}\right\rceil}\textup{d}\nu_{-s\z}(y)}{(1-\ee^{-s\aaa})\int\aaa^{-1}\z\textup{d}\mu_{-s\z}}
	\end{eqnarray*}
	as $t\to\infty$ uniformly for $x\in\Sigma^{\infty}$.
\end{proof}

In view of the existence of the average fractal curvature measures the following corollary is essential.
\begin{cor}\label{thmlalleyaverage}
	Under the assumptions of \cref{thmlalleyneu}
	\begin{eqnarray*}
		\lim_{T\to\infty}T^{-1}\int_{0}^{T}\ee^{-st} \sum_{n=0}^{\infty}\sum_{y\colon\sigma^n y=x}g(y)\mathds 1_{\{S_n f(y)\leq t\}\vp} \textup{d}t
	\end{eqnarray*}
	exists and equals 
	\begin{eqnarray*}
\frac{\eigenf_{-sf}(x)\int g\textup{d}\nu_{-sf}}{s\int f\textup{d}\mu_{-sf}}.
	\end{eqnarray*}
\end{cor}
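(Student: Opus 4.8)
The plan is to feed the lattice asymptotic of \cref{thmlalleyneu} into the Cesàro integral and to exploit that, after multiplication by $e^{-st}$, the asymptotic profile becomes \emph{periodic}. Write $N_f(t,x)\defeq\sum_{n=0}^{\infty}\sum_{y\colon\sigma^n y=x}g(y)\mathds 1_{\{S_nf(y)\leq t\}}$ for the renewal sum and let $R(t,x)$ denote $\ee^{-st}$ times the right-hand side of \cref{thmlalleyneu}. Setting $\theta\defeq\{t/\aaa\}$ and $u(y)\defeq(\psi(y)-\psi(x))/\aaa$, the identity $\lceil u-t/\aaa\rceil=\lceil u-\theta\rceil-\lfloor t/\aaa\rfloor$ gives $\ee^{-st}\ee^{-s\aaa\lceil u-t/\aaa\rceil}=\ee^{-s\aaa\theta}\ee^{-s\aaa\lceil u-\theta\rceil}$, which depends on $t$ only through $\theta$. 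Hence $R(\cdot,x)$ is $\aaa$-periodic, and since $g$, $\psi$ and $\eigenf_{-s\z}$ are continuous on the compact space $\Sigma^{\infty}$, it is bounded.

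First I would reduce the Cesàro average to the period mean of $R$. By \cref{thmlalleyneu} one has $\ee^{-st}N_f(t,x)=R(t,x)(1+\eta(t,x))$ with $\sup_{x}\lvert\eta(t,x)\rvert\to0$ as $t\to\infty$; together with the boundedness of $R$ this yields $\ee^{-st}N_f(t,x)=R(t,x)+o(1)$ uniformly in $x$. As the Cesàro average of an $o(1)$ term vanishes and the Cesàro average of the $\aaa$-periodic function $R(\cdot,x)$ converges to its mean $\aaa^{-1}\int_0^{\aaa}R(t,x)\,\textup{d}t$, the limit in the statement exists and equals this mean.

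Next I would evaluate the mean. Substituting $\theta=t/\aaa$ and applying Fubini, it equals $\tfrac{\aaa\eigenf_{-s\z}(x)}{(1-\ee^{-s\aaa})\int\z\,\textup{d}\mu_{-s\z}}\int g(y)\bigl(\int_0^1\ee^{-s\aaa\theta}\ee^{-s\aaa\lceil u(y)-\theta\rceil}\,\textup{d}\theta\bigr)\textup{d}\nu_{-s\z}(y)$. Splitting the inner $\theta$-integral at $\{u(y)\}$, where the ceiling jumps, a direct calculation gives $\int_0^1\ee^{-s\aaa\theta}\ee^{-s\aaa\lceil u-\theta\rceil}\,\textup{d}\theta=(1-\ee^{-s\aaa})\ee^{-s\aaa u}/(s\aaa)$. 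Since $\ee^{-s\aaa u(y)}=\ee^{-s(\psi(y)-\psi(x))}$, the factors $\aaa$ and $(1-\ee^{-s\aaa})$ cancel and the mean simplifies to $\tfrac{\eigenf_{-s\z}(x)\ee^{s\psi(x)}}{s\int\z\,\textup{d}\mu_{-s\z}}\int g(y)\ee^{-s\psi(y)}\,\textup{d}\nu_{-s\z}(y)$.

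The remaining and principal task is to rewrite this in terms of the potential $-sf$ using the coboundary relation $f-\z=\psi-\psi\circ\sigma$. Since $-sf$ and $-s\z$ differ by the coboundary of $-s\psi$, they have equal pressure (so $\eigenv_{-s\z}=\eigenv_{-sf}=1$) and the same Gibbs measure $\mu_{-sf}=\mu_{-s\z}$; by $\sigma$-invariance the coboundary integrates to zero, whence $\int f\,\textup{d}\mu_{-sf}=\int\z\,\textup{d}\mu_{-s\z}$. From $\mathcal L_{-sf}=M_{\ee^{s\psi}}\mathcal L_{-s\z}M_{\ee^{-s\psi}}$, where $M$ denotes multiplication, one reads off $\eigenf_{-sf}=Z\,\ee^{s\psi}\eigenf_{-s\z}$ and $\textup{d}\nu_{-sf}=Z^{-1}\ee^{-s\psi}\,\textup{d}\nu_{-s\z}$ with the \emph{same} constant $Z\defeq\int\ee^{-s\psi}\,\textup{d}\nu_{-s\z}$, which is forced by the normalisations $\int\eigenf\,\textup{d}\nu=1$. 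Substituting these three identities makes $Z$ cancel and collapses the mean to $\tfrac{\eigenf_{-sf}(x)\int g\,\textup{d}\nu_{-sf}}{s\int f\,\textup{d}\mu_{-sf}}$, as claimed. I expect this final step to be the main obstacle: one must check that the eigenfunction, eigenmeasure and Gibbs-measure identities all hold with a single common normalising constant $Z$, so that it cancels and leaves precisely the clean, nonlattice-type expression of \cref{thmlalley}.
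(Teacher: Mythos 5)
Your proof is correct and follows essentially the same route as the paper's: reduce the Ces\`aro average via \cref{thmlalleyneu} to the period mean of the $\aaa$-periodic profile, evaluate that mean by Fubini and splitting the $\theta$-integral at the jump of the ceiling, and then translate back to the $-sf$ quantities via the coboundary relation. Your explicit tracking of the normalising constant $Z$ is in fact slightly more careful than the paper, which asserts $\ee^{s\psi}\eigenf_{-s\z}=\eigenf_{-sf}$ and $\ee^{-s\psi}\textup{d}\nu_{-s\z}=\textup{d}\nu_{-sf}$ without the (mutually cancelling) factor $Z$; the end result is the same.
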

\begin{proof}
	First, observe that for two functions $f_1,\,f_2\colon(0,\infty)\to\mathbb R$ which satisfy $f_1(t)\sim f_2(t)$ as $t\to\infty$, the existence of $G_1\defeq\lim_{T\to\infty}T^{-1}\int_0^T f_1(t)\textup{d}t$ implies the existence of $G_2\defeq\lim_{T\to\infty}T^{-1}\int_0^T f_2(t)\textup{d}t$ and $G_1=G_2$. In view of \cref{thmlalleyneu}, we hence consider the function $\eta\colon[0,\infty)\to\mathbb R$ given by
	\begin{eqnarray*}
		\eta(t)\defeq \ee^{-st}\int_{\Sigma^{\infty}} g(y)\ee^{-s\aaa\left\lceil\frac{\psi(y)-\psi(x)}{\aaa}-\frac{t}{\aaa}\right\rceil}\textup{d}\nu_{-s\z}(y).
	\end{eqnarray*}	
	Since $\eta(t+\aaa)=\eta(t)$ for all $t\in(0,\infty)$, $\eta$ is periodic with period $\aaa$. As $\eta$ is moreover locally integrable, this implies 
	\begin{eqnarray*}
		\lim_{T\to\infty}T^{-1}\hspace{-0.1cm}\int_{0}^T\hspace{-0.1cm}\eta(t)\textup{d}t
		\hspace{-0.2cm}&=&\hspace{-0.2cm} \lim_{T\to\infty}T^{-1}\bigg(\sum_{k=0}^{\left\lfloor\aaa^{-1}T\right\rfloor-1}\hspace{-0.1cm}\int_{T-\aaa(k+1)}^{T-\aaa k}\hspace{-0.1cm}\eta(t)\textup{d}t+\int_{0}^{T-a\left\lfloor\aaa^{-1}T\right\rfloor}\hspace{-0.1cm}\eta(t)\textup{d}t\bigg)\\
		\hspace{-0.2cm}&=&\hspace{-0.2cm} \lim_{T\to\infty}T^{-1}\left\lfloor\aaa^{-1}T\right\rfloor\int_{0}^{\aaa}\eta(t)\textup{d}t
		= \aaa^{-1}\int_{0}^{\aaa}\eta(t)\textup{d}t.
	\end{eqnarray*}
	Applying Fubini's theorem yields
	\begin{eqnarray*}
		\int_{0}^{\aaa}\eta(t)\textup{d}t= \int_{\Sigma^{\infty}}\int_0^{\aaa}\ee^{-st}g(y)\ee^{-s\aaa\left\lceil\frac{\psi(y)-\psi(x)}{\aaa}-\frac{t}{\aaa}\right\rceil}\textup{d}t\textup{d}\nu_{-s\z}(y).
	\end{eqnarray*}
	Define $E(y)\defeq\aaa\{\aaa^{-1}\left(\psi(y)-\psi(x)\right)\}$. This is the unique real number in $[0,\aaa)$ such that $\aaa^{-1} \left(\psi(y)-\psi(x)-E(y)\right)\in\mathbb Z$. Since $\aaa^{-1}t\in[0,1)$ for $t\in[0,\aaa)$, we hence have
	\begin{eqnarray*}
		&&\hspace{-0.7cm}\int_{0}^{\aaa}\eta(t)\textup{d}t\\
		&&\hspace{-0.7cm}=\hspace{-0.1cm} \int_{\Sigma^{\infty}} \hspace{-0.18cm}\bigg(\hspace{-0.08cm}\int_0^{E(y)}\hspace{-0.15cm}\ee^{-st}g(y)\ee^{-s\aaa\left\lceil\frac{\psi(y)-\psi(x)}{\aaa}\right\rceil}\textup{d}t + \int_{E(y)}^{\aaa}\hspace{-0.15cm}\ee^{-st}g(y)\ee^{-s\aaa\left\lfloor\frac{\psi(y)-\psi(x)}{\aaa}\right\rfloor}\textup{d}t\hspace{-0.08cm}\bigg)\textup{d}\nu_{-s\z}(y)\\
		&&\hspace{-0.7cm}=\hspace{-0.1cm} \int_{\Sigma^{\infty}}\hspace{-0.25cm}\frac{g(y)}{s}\bigg( \hspace{-0.08cm}\ee^{-s\aaa\left\lceil\frac{\psi(y)-\psi(x)}{\aaa}\right\rceil}\big(1-\ee^{-sE(y)}\big)\hspace{-0.1cm} + \ee^{-s\aaa\left\lfloor\frac{\psi(y)-\psi(x)}{\aaa}\right\rfloor}\big(\ee^{-sE(y)}-\ee^{-s\aaa}\big)\hspace{-0.08cm}\bigg)\textup{d}\nu_{-s\z}(y)\\
		&&\hspace{-0.7cm}=\hspace{-0.1cm} \frac{1-\ee^{-s\aaa}}{s}\ee^{s\psi(x)}\int_{\Sigma^{\infty}} g(y)\ee^{-s\psi(y)}\textup{d}\nu_{-s\z}(y),
	\end{eqnarray*}
	where the last equality can be obtained by distinguishing the cases $E(y)\neq 0$ and $E(y)=0$, that is $\aaa^{-1}\left(\psi(y)-\psi(x)\right)\in\mathbb Z$.
	As by \cref{thmlalleyneu}
	\begin{eqnarray*}
		\ee^{-st}\sum_{n=0}^{\infty}\sum_{y\colon\sigma^n y=x}g(y)\mathds 1_{\{S_n f(y)\leq t\}\vp}
		\sim \frac{\aaa\eigenf_{-s\z}(x)}{\left(1-\ee^{-s\aaa}\right)\int\z\textup{d}\mu_{-s\z}} \eta(t)
	\end{eqnarray*}
	as $t\to\infty$ uniformly for $x\in\Sigma^{\infty}$, the entering remark of this proof now implies
	\begin{eqnarray*}
		&&\lim_{T\to\infty}T^{-1}\int_0^T	\ee^{-st}\sum_{n=0}^{\infty}\sum_{y\colon\sigma^n y=x}g(y)\mathds 1_{\{S_n f(y)\leq t\}\vp}\textup{d}t\\
		&&\qquad\qquad= \frac{\ee^{s\psi(x)}\eigenf_{-s\z}(x)}{s\int\z\textup{d}\mu_{-s\z}} \int_{\Sigma^{\infty}} g(y)\ee^{-s\psi(y)}\textup{d}\nu_{-s\z}(y).
	\end{eqnarray*}
	Finally, one easily verifies that $\ee^{s\psi}\eigenf_{-s\z}=\eigenf_{-sf}$, $\ee^{-s\psi}\textup{d}\nu_{-s\z}=\textup{d}\nu_{-sf}$ and $\int\z\textup{d}\mu_{-s\z}=\int f\textup{d}\mu_{-sf}$, which completes the proof.	
\end{proof}

In order to prove Part (\ref{conformalMinkowski:lattice}) of \cref{conformalMinkowski}, the following lemma which is closely related to \cref{thmlalleyneu} is needed.
\begin{lem}\label{lem:existanceB}
  Assume the conditions of \cref{thmlalleyneu} and fix a nonempty Borel set $B\subseteq\mathbb R$. For $x\in\Sigma^{\infty}$ define the function $\eta_B\colon(0,\infty)\to\mathbb R$ by 
  \[
  \eta_B(t)\defeq \ee^{-st}\int_{\Sigma^{\infty}}\mathds 1_{\psi^{-1}B\vp}(y)\ee^{-s\aaa\left\lceil\frac{\psi(y)-\psi(x)}{\aaa}-\frac{t}{\aaa}\right\rceil}\textup{d}\nu_{-s\z}(y).
  \]
  Then $\lim_{t\to\infty}\eta_B(t)$ exists if and only if for every $t\in[0,\aaa)$ we have
    \begin{eqnarray*}
      &&\sum_{n\in\mathbb Z}\ee^{-s\aaa n}\nu_{-s\z}\circ\psi^{-1}\big(B\cap[n\aaa,n\aaa+t)\big)\\
	&&\qquad\qquad=\frac{\ee^{st}-1}{\ee^{s\aaa}-1}\sum_{n\in\mathbb Z}\ee^{-s\aaa n}\nu_{-s\z}\circ\psi^{-1}\big(B\cap[n\aaa,(n+1)\aaa)\big).
    \end{eqnarray*}
\end{lem}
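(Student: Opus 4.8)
The plan is to uncover a hidden $\aaa$-periodicity of $\eta_B$ and thereby reduce the existence of the limit to a question of constancy. Write $\mu\defeq\nu_{-s\z}\circ\psi^{-1}$ for the pushforward measure on $\mathbb R$; since $\mathds 1_{\psi^{-1}B}(y)=\mathds 1_B(\psi(y))$, the substitution $u=\psi(y)$ turns the defining integral into
\[
	\eta_B(t)=\ee^{-st}\int_B\ee^{-s\aaa\left\lceil\frac{u-\psi(x)-t}{\aaa}\right\rceil}\textup{d}\mu(u).
\]
As $\psi$ is continuous on the compact space $\Sigma^{\infty}$ it is bounded, so $\mu$ is compactly supported, all sums over $n\in\mathbb Z$ occurring below are finite, and $\eta_B$ is bounded. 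The decisive first observation is that $\eta_B$ is periodic with period $\aaa$: replacing $t$ by $t+\aaa$ uses $\lceil z-1\rceil=\lceil z\rceil-1$, which produces a factor $\ee^{s\aaa}$ in the integrand that is exactly cancelled by the prefactor $\ee^{-s(t+\aaa)}$. Since a bounded $\aaa$-periodic function $\eta$ satisfies $\eta(t_0)=\lim_{k\to\infty}\eta(t_0+k\aaa)$ whenever the limit at infinity exists, such a function has a limit as $t\to\infty$ \emph{if and only if} it is constant on $(0,\infty)$.

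Next I would strip off the dependence on $x$, which enters only through the shift $\psi(x)$ inside the ceiling. A direct manipulation gives $\eta_B(t)=\ee^{s\psi(x)}\widetilde\eta_B(t+\psi(x))$, where $\widetilde\eta_B(t)\defeq\ee^{-st}\int_B\ee^{-s\aaa\lceil(u-t)/\aaa\rceil}\textup{d}\mu(u)$ is the auxiliary function obtained by deleting $\psi(x)$. Because $\ee^{s\psi(x)}\neq0$ and $\widetilde\eta_B$ is again $\aaa$-periodic, the limit of $\eta_B$ exists if and only if that of $\widetilde\eta_B$ does, if and only if $\widetilde\eta_B$ is constant. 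This explains why the criterion in the statement is $x$-free and reduces the lemma to characterising constancy of $\widetilde\eta_B$.

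The remaining step is an explicit evaluation of $\widetilde\eta_B$ on a fundamental domain $t\in[0,\aaa)$. Splitting $\mathbb R$ into the lattice cells $[m\aaa,(m+1)\aaa)$ and writing $u=m\aaa+r$ with $r\in[0,\aaa)$, one checks that $\lceil(u-t)/\aaa\rceil$ equals $m$ precisely when $r\leq t$ and equals $m+1$ otherwise. Summing the two contributions, and abbreviating $S\defeq\sum_{m\in\mathbb Z}\ee^{-s\aaa m}\mu(B\cap[m\aaa,(m+1)\aaa))$ and $S(t)\defeq\sum_{m\in\mathbb Z}\ee^{-s\aaa m}\mu(B\cap[m\aaa,m\aaa+t))$ (the partial cell masses, up to the endpoint issue discussed below), a reindexing $m\mapsto m+1$ of the first sum yields
\[
	\widetilde\eta_B(t)=\ee^{-st}\big(\ee^{-s\aaa}S+(1-\ee^{-s\aaa})S(t)\big).
\]
Since $S(0)=0$, constancy of $\widetilde\eta_B$ forces its value to be $\ee^{-s\aaa}S$, and solving the resulting equation for $S(t)$ gives $S(t)=\frac{\ee^{st}-1}{\ee^{s\aaa}-1}S$; conversely this relation makes $\widetilde\eta_B$ constant. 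Unravelling $S$, $S(t)$ and $\mu=\nu_{-s\z}\circ\psi^{-1}$ recovers the displayed equation verbatim, for every $t\in[0,\aaa)$.

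I expect the one genuinely delicate point to be the bookkeeping at the interval endpoints, that is, the behaviour of $\mu$ at the lattice points $m\aaa$ and at the moving endpoint $m\aaa+t$. The cell computation naturally produces the \emph{closed} partial masses $\mu(B\cap[m\aaa,m\aaa+t])$, and the left-closed, right-open conventions in the stated intervals are chosen precisely so that, in the non-atomic situation, the contributions assemble into $S$ and $S(t)$ without error. If $\mu$ charges a lattice point, then $S(t)$ fails to tend to $0$ as $t\searrow0$, the displayed equation breaks for small $t$, and correspondingly $\widetilde\eta_B$ cannot be constant, so the equivalence persists. Making this endpoint/atom analysis watertight, rather than the clean computation sketched above, is the only real obstacle; everything else is the periodicity reduction and the elementary per-cell evaluation.
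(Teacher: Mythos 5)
Your argument is essentially the paper's own proof: both reduce the existence of $\lim_{t\to\infty}\eta_B(t)$ to constancy via the $\aaa$-periodicity, push forward by $\psi$, evaluate the ceiling cell by cell on $[m\aaa,(m+1)\aaa)$, and solve the resulting identity for the constant (the paper pins the constant down by letting $t\to\aaa$ rather than by your $S(0)=0$ normalisation, but these are interchangeable). The closed-versus-half-open endpoint bookkeeping you flag is handled no more carefully in the paper's proof than in your sketch, so this is not a gap relative to the original.
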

\begin{proof}
  First, note that the above sums are finite. $\eta_B$ is a periodic function with period $a$, meaning $\eta_B(t+\aaa)=\eta_B(t)$ for all $t\in(0,\infty)$. Therefore, $\lim_{t\to\infty}\eta_B(t)$ exists if and only if $\eta_B$ is a constant function. For $t\in[\psi(x),\psi(x)+\aaa)$ we have
    \begin{eqnarray*}
      &&\eta_B(t-\psi(x))
      = \ee^{s\psi(x)-st}\int_{\Sigma^{\infty}}\mathds 1_{\psi^{-1}B\vp}(y)\ee^{-s\aaa\left\lceil\frac{\psi(y)-t}{\aaa}\right\rceil}\textup{d}\nu_{-s\z}(y)\\
      &&\qquad= \ee^{s\psi(x)-st}\sum_{n\in\mathbb Z}\int_{n\aaa}^{(n+1)\aaa}\mathds 1_{B\vp}(y)\ee^{-s\aaa\left\lceil\frac{y-t}{\aaa}\right\rceil}\textup{d}\nu_{-s\z}\circ\psi^{-1}(y)\\
      &&\qquad= \ee^{s\psi(x)-st+s\aaa\left\lfloor\frac{t}{\aaa}\right\rfloor}\sum_{n\in\mathbb Z} \ee^{-s\aaa n}\bigg(\nu_{-s\z}\circ\psi^{-1}\big(B\cap [n\aaa,n\aaa+a\{\aaa^{-1}t\}]\big)\\
      &&\qquad\qquad\qquad+\ee^{-s\aaa}\nu_{-s\z}\circ\psi^{-1}\big(B\cap(n\aaa+a\{\aaa^{-1}t\},(n+1)\aaa)\big)\bigg)\\
      &&\qquad= \ee^{s\psi(x)-s\aaa\{\frac{t}{\aaa}\}}\sum_{n\in\mathbb Z} \ee^{-s\aaa n}\bigg((1-\ee^{-s\aaa})\nu_{-s\z}\circ\psi^{-1}\big(B\cap [n\aaa,n\aaa+a\{\aaa^{-1}t\}]\big)\\
      &&\qquad\qquad\qquad+\ee^{-s\aaa}\nu_{-s\z}\circ\psi^{-1}\big(B\cap[n\aaa,(n+1)\aaa)\big)\bigg).
    \end{eqnarray*}
    Thus, $\lim_{t\to\infty}\eta_B(t)$ exists if and only if there is a $\tilde{c}\in\mathbb R$ such that for every $t\in[0,\aaa)$
    \begin{eqnarray*}
      &&\hspace{-0.5cm}\sum_{n\in\mathbb Z} \ee^{-s\aaa n}\nu_{-s\z}\circ\psi^{-1}\big(B\cap [n\aaa,n\aaa+t]\big)\\
      &&\hspace{-0.5cm}\qquad = (1-\ee^{-s\aaa})^{-1}\bigg(\tilde{c}\ee^{st-s\psi(x)}-\ee^{-s\aaa}\sum_{n\in\mathbb Z} \ee^{-s\aaa n}\nu_{-s\z}\circ\psi^{-1}\big(B\cap[n\aaa,(n+1)\aaa)\big)\bigg).
    \end{eqnarray*}
    Taking the limit as $t$ tends to $\aaa$ we hence obtain 
    \[
    \tilde{c}=\ee^{s\psi(x)-s\aaa}\sum_{n\in\mathbb Z} \ee^{-s\aaa n}\nu_{-s\z}\circ\psi^{-1}\big(B\cap[n\aaa,(n+1)\aaa)\big)
    \]
    which proves the statement.
\end{proof}

Another important tool in the proofs of our results is a relationship between the 0-th and the 1-st (average) fractal curvature measures. In order to show that the existence of the 0-th fractal curvature measure implies the existence of the 1-st fractal curvature measure we use Corollary 3.2 of \cite{RatajWinter} which is a higher-dimensional and more general version of the following theorem.
\begin{thm}[Rataj, Winter]\label{lemWinterRataj}
	Let $Y\subset\mathbb R$ be a nonempty and compact set such that $\leb^1(Y)=0$. Then
	\[
		\liminf_{\e\to 0}\frac{\e^{\mdim}\leb^0(\partial Y_{\e})}{1-\delta}
		\hspace{-0.05cm}\leq\hspace{-0.05cm} \liminf_{\e\to 0}\e^{\mdim-1}\leb^1(Y_{\e})
		\hspace{-0.05cm}\leq\hspace{-0.05cm} \limsup_{\e\to 0}\e^{\mdim-1}\leb^1(Y_{\e})
		\hspace{-0.05cm}\leq\hspace{-0.05cm} \limsup_{\e\to 0}\frac{\e^{\mdim}\leb^0(\partial Y_{\e})}{1-\delta}.
	\]
\end{thm}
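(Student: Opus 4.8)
The plan is to reduce the whole statement to an exact one–dimensional Steiner-type identity and then run an elementary Abelian-type argument. Since $\leb^1(Y)=0$ and $Y$ is compact, I would write its convex hull as $\langle Y\rangle=[m,M]$ and let $(g_i)_i$ be the lengths of the \emph{gaps} of $Y$, i.e.\ of the connected components of $[m,M]\setminus Y$; because $\leb^1(Y)=0$ these satisfy $\sum_i g_i=M-m$. A direct inspection of $Y_\e$ shows that a gap of length $g_i$ survives in $Y_\e$ (shrunk to length $g_i-2\e$) precisely when $g_i>2\e$, and is otherwise swallowed. Reading this off gives the two explicit formulae
\[
\leb^1(Y_\e)=2\e+\sum_i\min(g_i,2\e),\qquad \leb^0(\partial Y_\e)=2+2\,\card\{i:g_i>2\e\}.
\]

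The key step is to combine these into the exact identity
\[
\leb^1(Y_\e)=\int_0^\e \leb^0(\partial Y_u)\,\mathrm{d}u .
\]
Indeed, differentiating $\min(g_i,2\e)$ in $\e$ gives $2\cdot\mathds 1_{\{2\e<g_i\}}$, so formally $\frac{\mathrm{d}}{\mathrm{d}\e}\leb^1(Y_\e)=2+2\,\card\{i:g_i>2\e\}=\leb^0(\partial Y_\e)$. To make this rigorous with possibly infinitely many gaps I would truncate the sum to its first $M$ terms, apply the fundamental theorem of calculus to the resulting Lipschitz function, and pass to the limit by monotone convergence, using $\leb^1(Y)=0$ (equivalently $\leb^1(Y_\e)\to0$ as $\e\to0$) to fix the constant of integration at $\e=0$. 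This is the step I expect to require the most care, although it is entirely elementary.

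Once the integral identity is in hand, the two middle terms of the asserted chain are ordered by $\liminf\le\limsup$, and the two outer inequalities follow from a standard asymptotic estimate. Writing $b(\e)\defeq\leb^0(\partial Y_\e)$, set $L\defeq\limsup_{\e\to0}\e^{\mdim}b(\e)$. If $L=\infty$ there is nothing to prove; otherwise, for $\eta>0$ I choose $\e_0$ with $b(u)\le(L+\eta)u^{-\mdim}$ for $0<u<\e_0$, and integrate (using $\mdim<1$, so that $\int_0^\e u^{-\mdim}\,\mathrm{d}u=\e^{1-\mdim}/(1-\mdim)$) to obtain $\leb^1(Y_\e)\le(L+\eta)\e^{1-\mdim}/(1-\mdim)$ for $\e<\e_0$; hence $\e^{\mdim-1}\leb^1(Y_\e)\le(L+\eta)/(1-\mdim)$, and letting $\e\to0$ and then $\eta\to0$ yields the rightmost inequality. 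The leftmost inequality is symmetric: with $\ell\defeq\liminf_{\e\to0}\e^{\mdim}b(\e)\ge0$ (the case $\ell=0$ being trivial since $\leb^1(Y_\e)\ge0$), the lower bound $b(u)\ge(\ell-\eta)u^{-\mdim}$ for small $u$ integrates to $\e^{\mdim-1}\leb^1(Y_\e)\ge(\ell-\eta)/(1-\mdim)$, giving $\liminf_{\e\to0}\e^{\mdim-1}\leb^1(Y_\e)\ge\ell/(1-\mdim)$.

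Two remarks on scope. The argument uses only $\mdim<1$ (needed for convergence of $\int_0^\e u^{-\mdim}\,\mathrm{d}u$) and never the full force of $\mdim$ being the Minkowski dimension, so it is really an Abelian theorem for the pair $\bigl(\leb^1(Y_\e),\leb^0(\partial Y_\e)\bigr)$. The genuinely one-dimensional input is confined to the Steiner identity above, which here replaces the co-area arguments underlying the general (higher-dimensional) Corollary 3.2 of \cite{RatajWinter}.
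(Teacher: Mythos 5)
Your proof is correct, but it is not the route the paper takes: the paper does not prove this theorem at all, citing it as (the one-dimensional case of) Corollary 3.2 of Rataj--Winter, whose proof rests on the Kneser/Stach\'o almost-everywhere differentiation formula $\frac{\textup{d}}{\textup{d}\e}\leb^1(Y_\e)=\leb^0(\partial Y_\e)$ (stated in the paper as \Cref{Stachothm}) followed by integration. What you do instead is specialise to dimension one from the outset: the gap decomposition gives the exact closed forms $\leb^1(Y_\e)=2\e+\sum_i\min(g_i,2\e)$ and $\leb^0(\partial Y_\e)=2+2\,\card\{i:g_i>2\e\}$, from which the integral identity $\leb^1(Y_\e)=\int_0^\e\leb^0(\partial Y_u)\,\textup{d}u$ follows by Tonelli (your truncation/monotone-convergence argument works; swapping $\sum_i$ and $\int_0^\e$ directly for nonnegative terms is even quicker), and the outer inequalities then drop out of the elementary Abelian estimate you describe. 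This buys a fully self-contained, exact identity in place of an a.e.\ derivative plus an appeal to the literature, at the cost of being irreducibly one-dimensional, whereas the cited result covers $\mathbb R^d$. Two small points: your lower-bound argument should also note that the case $\liminf_{\e\to0}\e^{\mdim}\leb^0(\partial Y_\e)=\infty$ is handled by replacing $\ell-\eta$ with an arbitrary constant; and the hypothesis $\mdim<1$, which you correctly flag as the only use of $\mdim$, is implicit in the statement (otherwise $1-\mdim$ vanishes) and does hold in the paper's applications, where $F$ is a self-conformal set with $\leb^1(F)=0$.
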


The proof is based on an interesting relationship between the derivative $\frac{\textup d}{\textup d\e}\leb^1(F_{\e})$ which exists Lebesgue almost everywhere and the quantity $\leb^0(\partial F_{\e})$ which was established in \cite{Stacho} for arbitrary bounded subsets of $\mathbb R^d$ and builds on the work of \cite{Kneser}. As this relationship is also of use for us, we state it in the form of Corollary 2.5 in \cite{RatajWinter}.

\begin{proposition}[Stachó]\label{Stachothm}
	Let $Y\subset\mathbb R$ be compact. Then the function $\e\mapsto\leb^1(Y_{\e})$ is differentiable for all but a countable number of $\e>0$ with differential
	\[
		\frac{\textup{d}}{\textup{d}\e}\leb^1(Y_{\e})=\leb^0(\partial Y_{\e}).
	\]
\end{proposition}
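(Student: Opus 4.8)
The plan is to exploit the explicit interval structure of parallel neighbourhoods in dimension one, which reduces the statement to an elementary computation with the gap lengths of $Y$. Write $\langle Y\rangle=[\min Y,\max Y]$ for the convex hull and set $\ell\defeq\leb^1(\langle Y\rangle)$. Since $\mathbb R\setminus Y$ is open, I would decompose it as $(-\infty,\min Y)\cup\bigcup_{j\in J}G_j\cup(\max Y,\infty)$, where the \emph{gaps} $G_j=(a_j,b_j)$ are the pairwise disjoint bounded complementary intervals with endpoints $a_j,b_j\in Y$ and lengths $g_j\defeq b_j-a_j$. Disjointness gives $\sum_{j\in J}g_j\leq\ell<\infty$, so for every fixed $\e>0$ only finitely many gaps satisfy $g_j>2\e$; this summability is the one quantitative input the argument rests on.

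First I would identify $Y_\e$ and its boundary. For a point $x\in G_j$ the nearest point of $Y$ is $a_j$ or $b_j$, so $x\notin Y_\e$ iff $x\in(a_j+\e,b_j-\e)$, an interval that is nonempty precisely when $g_j>2\e$; outside $\langle Y\rangle$ the neighbourhood extends by exactly $\e$ on each side. Hence $Y_\e$ is a disjoint union of $N(\e)\defeq 1+\card\{j\in J\mid g_j>2\e\}$ nondegenerate closed intervals, a \emph{finite} number by the previous paragraph, and since these components are separated by open intervals of positive length, $\partial Y_\e$ consists of exactly $2N(\e)$ distinct points, i.e.\ $\leb^0(\partial Y_\e)=2N(\e)$. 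Removing the uncovered middles of the large gaps from $[\min Y-\e,\max Y+\e]$ then yields the closed-form expression
\[
	\leb^1(Y_\e)=\ell+2\e-\sum_{j\in J\colon g_j>2\e}(g_j-2\e).
\]

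It remains to differentiate this in $\e$. Let $E\defeq\{g_j/2\mid j\in J\}$, a countable set. I would argue that for $\e_0\notin E$ the index set $\{j\in J\mid g_j>2\e\}$ is locally constant near $\e_0$: since $\sum_j g_j<\infty$ forces the set of gap lengths to accumulate only at $0$, only finitely many $g_j$ lie near $2\e_0>0$, and as none of them equals $2\e_0$ there is a $\delta>0$ with no $g_j\in(2\e_0-2\delta,2\e_0+2\delta)$. On such a neighbourhood each surviving summand $-(g_j-2\e)$ contributes derivative $2$ and the term $2\e$ contributes $2$, so $\e\mapsto\leb^1(Y_\e)$ is differentiable at $\e_0$ with derivative $2+2\card\{j\mid g_j>2\e_0\}=2N(\e_0)=\leb^0(\partial Y_{\e_0})$. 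As $\mathbb R\setminus E$ is co-countable, this proves the claim.

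The argument is essentially routine in one dimension; the one point requiring genuine care is the finiteness and local-constancy step, where the summability $\sum_j g_j\leq\ell$ is indispensable, both to guarantee that $\partial Y_\e$ is a finite point set and to rule out accumulation of gap lengths at $2\e_0$. At the exceptional points $\e\in E$ the function has a true kink, its one-sided derivatives differing by $2$, but these points are countable and hence irrelevant to the statement. I note finally that the general $\mathbb R^d$ version of Stach\'o's result is considerably deeper and relies on Kneser's monotonicity estimates, whereas the linear structure of $\mathbb R$ makes the direct computation above available.
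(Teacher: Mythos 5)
Your argument is correct. Note, however, that the paper does not prove this proposition at all: it is imported verbatim from the literature (stated as Corollary 2.5 of the cited Rataj--Winter paper, going back to Stach\'o and, for the underlying monotonicity, to Kneser), and those sources treat arbitrary bounded subsets of $\mathbb R^d$ by much less elementary means. What you supply is a self-contained, purely one-dimensional proof: the decomposition of $\mathbb R\setminus Y$ into countably many gaps, the closed-form identity $\leb^1(Y_\e)=\ell+2\e-\sum_{g_j>2\e}(g_j-2\e)$ together with $\leb^0(\partial Y_\e)=2N(\e)$, and the local-constancy of the index set $\{j\mid g_j>2\e\}$ off the countable set $E=\{g_j/2\}$, which correctly uses that $\sum_j g_j\leq\ell<\infty$ forces the gap lengths to accumulate only at $0$. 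All steps check out, including the separation of the components of $Y_\e$ (adjacent components are separated by a removed interval of length $g_j-2\e>0$, so the boundary really has $2N(\e)$ points). The trade-off is clear: your proof is more elementary and fully explicit, but it is confined to $d=1$, whereas the quoted result holds in every dimension; since the paper only ever uses the one-dimensional case, your argument would in fact suffice for its purposes. One cosmetic remark: at an exceptional point $\e_0\in E$ the one-sided derivatives differ by $2$ times the number of gaps of length exactly $2\e_0$, not necessarily by $2$; this is an aside in your write-up and does not affect the proof.
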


For the results on the average fractal curvature measures we use Part (ii) of Lemma 4.6 of \cite{RatajWinter} which is a higher-dimensional version of the next proposition.
\begin{proposition}[Rataj, Winter]\label{lemWinterRataj:average}
	Let $Y\subset\mathbb R$ be nonempty and compact and such that its Minkowski dimension $\mdim$ is strictly less than 1. If $\overline{\mathcal M}(Y)<\infty$, then 
	\begin{eqnarray*}
		\limsup_{T\searrow 0} \lvert\ln T\rvert^{-1}\int_T^1\e^{\mdim-2}\leb^1(Y_{\e})\textup{d}\e
		\hspace{-0.1cm}&=&\hspace{-0.1cm}(1-\mdim)^{-1}\limsup_{T\searrow 0} \lvert\ln T\rvert^{-1}\int_T^1\e^{\mdim-1}\leb^0(Y_{\e})\textup{d}\e,\\
		\liminf_{T\searrow 0} \lvert\ln T\rvert^{-1}\int_T^1\e^{\mdim-2}\leb^1(Y_{\e})\textup{d}\e
		\hspace{-0.1cm}&=&\hspace{-0.1cm}(1-\mdim)^{-1}\liminf_{T\searrow 0} \lvert\ln T\rvert^{-1}\int_T^1\e^{\mdim-1}\leb^0(Y_{\e})\textup{d}\e.
	\end{eqnarray*}
\end{proposition}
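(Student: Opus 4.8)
The plan is to pass between the two integrands by an integration by parts, using Stach\'o's identity $\frac{\textup{d}}{\textup{d}\e}\leb^1(Y_\e)=\leb^0(\partial Y_\e)$ from \cref{Stachothm} to convert the derivative of the volume into the boundary-point count. Throughout write $V(\e)\defeq\leb^1(Y_\e)$ and $B(\e)\defeq\leb^0(\partial Y_\e)$. The first step is to secure enough regularity of $V$ to integrate by parts. For compact $Y\subset\mathbb R$ and $\e>0$ the neighbourhood $Y_\e$ is a finite union of closed intervals, and the number of its components is non-increasing in $\e$ (as $\e$ grows, components can only merge), so $B(\e)=2\cdot\card\{\text{components of }Y_\e\}$ is finite and bounded on every interval $[T,1]$ with $T>0$. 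Since $V$ is moreover continuous and, by \cref{Stachothm}, differentiable off a countable set with $V'=B$ there, the fundamental theorem of calculus for functions with a countable exceptional set shows that $V$ is absolutely continuous on $[T,1]$, which legitimises the computation below.

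The second step is the integration by parts itself, performed with $u=V(\e)$ and $\textup{d}v=\e^{\mdim-2}\textup{d}\e$; since $\mdim<1$ the antiderivative $v=\e^{\mdim-1}/(\mdim-1)$ is well defined on $[T,1]$. This produces the exact identity
\begin{eqnarray*}
\int_T^1\e^{\mdim-2}V(\e)\,\textup{d}\e
=\frac{1}{1-\mdim}\bigl(T^{\mdim-1}V(T)-V(1)\bigr)
+\frac{1}{1-\mdim}\int_T^1\e^{\mdim-1}B(\e)\,\textup{d}\e.
\end{eqnarray*}

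The third step is to divide by $\lvert\ln T\rvert$ and let $T\searrow 0$. The contribution $\lvert\ln T\rvert^{-1}V(1)/(1-\mdim)$ tends to $0$ because its numerator is constant while $\lvert\ln T\rvert\to\infty$. The decisive term is $\lvert\ln T\rvert^{-1}T^{\mdim-1}V(T)/(1-\mdim)=\lvert\ln T\rvert^{-1}T^{\mdim-1}\leb^1(Y_T)/(1-\mdim)$; here the hypothesis $\overline{\mathcal M}(Y)<\infty$ enters, guaranteeing that $T^{\mdim-1}\leb^1(Y_T)$ stays bounded as $T\searrow 0$, so that after division by the diverging factor $\lvert\ln T\rvert$ this term also vanishes. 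Hence the entire boundary contribution drops out in the limit, and because $1/(1-\mdim)>0$ may be pulled through both the $\limsup$ and the $\liminf$, both asserted identities follow simultaneously.

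The step I expect to be the main obstacle is the first one, namely upgrading Stach\'o's pointwise differentiability of $\e\mapsto\leb^1(Y_\e)$ off a countable set to genuine absolute continuity on compact subintervals of $(0,\infty)$, which is what makes the integration by parts valid; the non-increasing component count is exactly what supplies the missing local boundedness of $B$. The only other point requiring care is recognising that the sole purpose of the assumption $\overline{\mathcal M}(Y)<\infty$ is to keep the boundary term $T^{\mdim-1}V(T)$ bounded, so that it is killed by the $\lvert\ln T\rvert^{-1}$ factor; no finer information about the Minkowski content is needed.
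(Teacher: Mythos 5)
Your argument is correct. The paper itself gives no proof of \cref{lemWinterRataj:average}: it is imported as Part (ii) of Lemma 4.6 of \cite{RatajWinter}, so there is no internal argument to compare with, but your route --- Stach\'o's identity from \cref{Stachothm} combined with integration by parts --- is precisely the mechanism the paper points to for the companion statement \cref{lemWinterRataj}, and it yields a clean, self-contained one-dimensional proof. The step you single out as delicate is handled correctly, though it can be phrased more directly: since $Y_{\e'}=(Y_{\e})_{\e'-\e}$ and the number of connected components of $Y_{\e}$ is non-increasing in $\e$, one gets $\leb^1(Y_{\e'})-\leb^1(Y_{\e})\leq\leb^0(\partial Y_{T})\,(\e'-\e)$ for $T\leq\e\leq\e'\leq 1$, so $\e\mapsto\leb^1(Y_{\e})$ is Lipschitz on $[T,1]$; Lipschitz continuity is what gives absolute continuity (rather than the fundamental theorem of calculus giving it), and then the almost-everywhere derivative supplied by \cref{Stachothm} integrates back to the increment, which is all the integration by parts needs. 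After that, the exact identity you display, the boundedness of $T^{\mdim-1}\leb^1(Y_T)$ coming from $\overline{\mathcal M}(Y)<\infty$, and the fact that two functions whose difference tends to $0$ have the same $\limsup$ and $\liminf$ finish the proof; your reading that the Minkowski-content hypothesis is used only to kill this boundary term is accurate. (One typographical point: the statement writes $\leb^0(Y_{\e})$ where $\leb^0(\partial Y_{\e})$ is evidently meant, and your proof establishes the intended version.)
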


\section{Proofs of \cref{curvatureresult,conformalMinkowski}}\label{sec:proofs}
In this section we provide the proofs of \cref{curvatureresult,conformalMinkowski}. Since Parts (\ref{curvatureresult:average}) to (\ref{curvatureresult:lattice}) of \cref{curvatureresult} require different methods of proof, we are going to split this section into three subsections, each of which deals with one of these parts. But before subdividing the section, we make the following observations which are needed in the proofs of Parts (\ref{curvatureresult:average}) and (\ref{curvatureresult:nonlattice}) of \cref{curvatureresult}, and for \cref{conformalMinkowski}.

Without loss of generality we assume that $\{0,1\}\subset F\subseteq[0,1]$ as otherwise the result follows by rescaling. We start by giving the proof for the 0-th fractal curvature measure. For that we fix an $\e>0$ and consider the expression $\leb^0(\partial F_{\e}\cap(-\infty,\bb])/2$ for some $\bbb\in\mathbb R$. 
Since $\leb^0$ is the counting measure, $\leb^0(\partial F_{\e}\cap(-\infty,\bb])$ gives the number of endpoints of the connected components of $F_{\e}$ in $(-\infty,\bb]$. This number can be obtained by looking at how many complementary intervals of lengths greater than or equal to $2\e$ exist in $(-\infty,\bb]$:
\begin{eqnarray}\label{eq:lebesgue}
	\leb^0\big(\partial F_{\e}\cap(-\infty,\bb]\big)/2
	=\underbrace{\sum_{i=1}^{Q-1}\card\{\om\in\Sigma^*\mid L_{\om}^i\subseteq(-\infty,\bb],\ \Lomi\geq 2\e\}}_{\eqdef \Xi(\e)}+c_1/2,
\end{eqnarray}
where $c_1\in\{1,2,3\}$ depends on the value of $\bb$. 
Next, we need to find appropriate bounds for $\Xi(\e)$. For this, we choose an $m\in\mathbb N\cup\{0\}$ such that for all $\om\in\Sigma^m$ all \main\ gaps $L_{\om}^1,\ldots,L_{\om}^{\Q-1}$ of the sets $\phi_{\om}(F)$ are greater than or equal to $2\e$ and set
\begin{eqnarray*}
	\Xi_{\om}^i(\e)\defeq\card\{\omneu\in\Sigma^*\mid L_{\omneu\om}^i\subseteq(-\infty,\bb],\ \lvert L_{\omneu\om}^i\rvert\geq 2\e\}
\end{eqnarray*}
for each $\om\in\Sigma^m$ and $i\in\{1,\ldots,Q-1\}$.
We have the following connection.
\begin{eqnarray}\label{eq:Xisum}
	\sum_{i=1}^{\Q-1}\sum_{\om\in\Sigma^m}\Xi_{\om}^i(\e)
	\leq \Xi(\e)
	\leq\sum_{i=1}^{\Q-1}\sum_{\om\in\Sigma^m}\Xi_{\om}^i(\e)+\sum_{j=1}^{m}(\Q-1)\cdot N^{j-1}.
\end{eqnarray}
For the following, we fix $\bbb\in\mathbb R\setminus F$. Then $F\cap (-\infty,\bb]$ can be expressed as a finite union of sets of the form $\phi_{\kappa}F$, where $\kappa\in\Sigma^*$. To be more precise, let $l\in\mathbb N$ be minimal such that there exist $\kappa_1,\ldots,\kappa_l\in\Sigma^*$ satisfying
\begin{enumerate}
	\item $F\cap(-\infty,\bb]=\bigcup_{j=1}^l\phi_{\kappa_j}F$ and
	\item $\phi_{\kappa_i} F\cap\phi_{\kappa_j} F$ contains at most one point for all $i\neq j$, where $i,j\in\{1,\ldots,l\}$.
\end{enumerate}

Then for $Z\defeq \bigcup_{j=1}^l[\kappa_j]$ the function $\mathds 1_{Z\vp}$ is Hölder continuous. 
Making use of the existence of the bounded distortion constant $\bd_{n(\om)}$ of $\Phi$ on $\phi_{\om}F$ (see \cref{bd}), we can give estimates for $\Xi_{\om}^i(\e)$, namely for an arbitrary $x\in \Fun$ we have

\begin{eqnarray}
	\Xi_{\om}^i(\e)
	\leq \underbrace{\sum_{n=0}^{\infty}\sum_{\omneu\in\Sigma^n}\mathds 1_{Z\vp}(\omneu\om x)\mathds 1_{\{\lvert\phi_{\omneu}'(\phi_{\om}x)\rvert\cdot\bd_{n(\om)}\cdot\Lomi\geq2\e\}}\vp}_{\eqdef\overline{A}_{\om}^i(x,\e,Z)}+ \overline{c}_2(x,Z),\label{eq:Aomover}
\end{eqnarray}

where we need to insert the constant $\overline{c}_2(x,Z)$ because of the following reason. $L_{\omneu\om}^i\subseteq(-\infty,\bb]$ does not necessarily imply $\omneu\om x\in Z$ for an arbitrary $x\in\Fun$. However, if $n(\omneu)\geq\max_{j=1,\ldots,l}n(\kappa_j)$, either $[\omneu\om]\subseteq Z$ or $[\omneu\om]\cap Z=\emptyset$ obtains. Hence, there are only finitely many $\omneu\in\Sigma^*$ such that $L_{\omneu\om}^i\subseteq(-\infty,\bb]$ does not imply $\omneu\om x\in Z$ for all $x\in\Fun$. Letting $\overline{c}_2(x,Z)\in\mathbb R$ denote this finite number shows that Equation (\ref{eq:Aomover}) is true for all $\e>0$. Likewise, there exists a constant $\underline{c}_2(x,Z)\in\mathbb R$ such that for all $\e>0$
\begin{eqnarray}
	\Xi_{\om}^i(\e)
	\geq \underbrace{\sum_{n=0}^{\infty}\sum_{\omneu\in\Sigma^n} \mathds 1_{Z\vp}(\omneu\om x)\cdot\mathds 1_{\{\lvert\phi_{\omneu}'(\phi_{\om}x)\rvert\cdot\bd_{n(\om)}^{-1}\cdot\Lomi\geq2\e\}}\vp}_{\eqdef \underline{A}_{\om}^i(x,\e,Z)}-\underline{c}_2(x,Z).\label{eq:Aomunder}
\end{eqnarray}

Combining Equations (\ref{eq:lebesgue})-(\ref{eq:Aomunder}) we obtain that for all $m\in\mathbb N$ and $x\in\Fun$
\begin{eqnarray}
	\overline{C}_0^f(F,(-\infty,\bb])
	&\leq& \limsup_{\e\to 0}\e^{\mdim}\sum_{i=1}^{Q-1}\sum_{\om\in\Sigma^m} \overline{A}_{\om}^i(x,\e,Z)\quad\text{and}\label{eq:fcmoben}\\
	\underline{C}_0^f(F,(-\infty,\bb])
	&\geq& \liminf_{\e\to 0}\e^{\mdim}\sum_{i=1}^{Q-1}\sum_{\om\in\Sigma^m} \underline{A}_{\om}^i(x,\e,Z).\label{eq:fcmunten}	
\end{eqnarray}
In order to prove \cref{curvatureresult,conformalMinkowski} we want to apply \Cref{thmlalley,thmlalleyneu} to get asymptotics for both the expressions $\overline{A}_{\om}^i(x,\e,Z)$ and $\underline{A}_{\om}^i(x,\e,Z)$.
For this, note that
\begin{eqnarray}
	&&\sum_{\omneu\in\Sigma^n}\mathds 1_{Z\vp}(\omneu\om x)\cdot\mathds 1_{\{\lvert\phi_{\omneu}'(\phi_{\om}x)\rvert\cdot\bd_{n(\om)}^{\pm 1}\cdot\Lomi\geq2\e\}\vp}\nonumber\\
	&&\qquad\qquad\qquad= \sum_{y\colon\sigma^n y=\om x}\mathds 1_{Z\vp}(y)\cdot\mathds 1_{\{\sum_{k=1}^{n}-\ln\lvert\phi_{y_k}'(\sigma^k y)\rvert\leq-\ln\frac{2\e}{\Lomi\bd_{n(\om)}^{\pm 1}}\}\vp}\nonumber\\
	&&\qquad\qquad\qquad= \sum_{y\colon\sigma^n y=\om x}\mathds 1_{Z\vp}(y)\cdot\mathds 1_{\{S_n\xi(y)\leq-\ln\frac{2\e}{\Lomi\bd_{n(\om)}^{\pm 1}}\}\vp}.\label{eq:Snpm}
\end{eqnarray}

The hypotheses and \cref{rmk:alpha} imply that the geometric potential function $\xi$ is Hölder continuous and strictly positive. The unique $s>0$ for which $\eigenv_{-s\xi}=1$ is precisely the Minkowski dimension $\mdim$ of $F$, which results by combining the fact that $\eigenv_{-s\xi}=\exp(P(-s\xi))$ for each $s>0$ and \cref{thBedford}.

Before we distinguish between the lattice and nonlattice case and give the proof of \cref{curvatureresult}, we prove the following lemma, which is needed in the proofs of all three parts of \cref{curvatureresult}.

\begin{lem}\label{Upsilon}
	For an arbitrary $x\in\Sigma^{\infty}$ and $\Upsilon\in\mathbb R$ we have that
	\begin{enumerate}
		\item 
			$\Upsilon\leq\sum_{i=1}^{\Q-1}\sum_{\om\in\Sigma^m}\eigenf_{-\mdim\xi}(\om x)\big(\Lomi\bd_{m}\big)^{\mdim}$ for all $m\in\mathbb N$ implies
		\[
			\Upsilon\leq\liminf_{m\to\infty}\sum_{i=1}^{\Q-1}\sum_{\om\in\Sigma^m}\Lomi^{\mdim}.
		\]
		\item
			$\Upsilon\geq\sum_{i=1}^{\Q-1}\sum_{\om\in\Sigma^m}\eigenf_{-\mdim\xi}(\om x)\big(\Lomi\bd_{m}^{-1}\big)^{\mdim}$ for all $m\in\mathbb N$ implies
		\[
		\Upsilon\geq\limsup_{m\to\infty}\sum_{i=1}^{\Q-1}\sum_{\om\in\Sigma^m}\Lomi^{\mdim}.
		\]
	\end{enumerate}
\end{lem}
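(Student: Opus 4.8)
The plan is to reduce both parts to a single sandwich estimate for the eigenfunction-weighted sums
\[
	W_m\defeq\sum_{i=1}^{\Q-1}\sum_{\om\in\Sigma^m}\eigenf_{-\mdim\xi}(\om x)\,\Lomi^{\mdim},
\]
showing that, up to the distortion factors $\bd_m^{\pm\mdim}$ which tend to $1$ by \cref{bd}, they are trapped between the lower and upper limits of the unweighted sums $T_k\defeq\sum_{i=1}^{\Q-1}\sum_{\om\in\Sigma^k}\lvert L_\om^i\rvert^{\mdim}$. Precisely, I aim to prove that for every $m\in\mathbb N$
\[
	\bd_m^{-\mdim}W_m\;\leq\;\liminf_{k\to\infty}T_k\;\leq\;\limsup_{k\to\infty}T_k\;\leq\;\bd_m^{\mdim}W_m,
\]
which I will call the \emph{sandwich estimate}. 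Granting it, Part (i) is immediate: the hypothesis $\Upsilon\leq\bd_m^{\mdim}W_m$ combined with $\bd_m^{-\mdim}W_m\leq\liminf_k T_k$ gives $\Upsilon\leq\bd_m^{2\mdim}\liminf_k T_k$ for all $m$, and letting $m\to\infty$ (so $\bd_m\to1$) yields $\Upsilon\leq\liminf_k T_k$. Part (ii) is the mirror image, pairing $\Upsilon\geq\bd_m^{-\mdim}W_m$ with $\limsup_k T_k\leq\bd_m^{\mdim}W_m$ to get $\Upsilon\geq\bd_m^{-2\mdim}\limsup_k T_k$ and sending $m\to\infty$.

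The content lies in the sandwich estimate, which I would obtain by comparing $T_{m+n}$ with $W_m$ for fixed $m$ and then letting $n\to\infty$. Writing $z\defeq\bij(x)\in F$ and splitting each word of length $m+n$ uniquely as $\tau\om$ with $\tau\in\Sigma^n$ and $\om\in\Sigma^m$, I use $L^i_{\tau\om}=\phi_\tau(L^i_\om)$ together with the mean value theorem to write $\lvert L^i_{\tau\om}\rvert=\lvert\phi_\tau'(s)\rvert\,\Lomi$ for some $s\in L^i_\om\subseteq\langle\phi_\om F\rangle$. As $\phi_\om z\in\phi_\om F\subseteq\langle\phi_\om F\rangle$ as well, \cref{bd} applied to $\phi_\tau$ on $\langle\phi_\om F\rangle$ (whose inner word $\om$ has length $m$) gives $\lvert\phi_\tau'(s)\rvert^{\mdim}\in[\bd_m^{-\mdim},\bd_m^{\mdim}]\,\lvert\phi_\tau'(\phi_\om z)\rvert^{\mdim}$. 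Summing over $i$, $\om$ and $\tau$, and recognising $\sum_{\tau\in\Sigma^n}\lvert\phi_\tau'(\phi_\om z)\rvert^{\mdim}=\mathcal L_{-\mdim\xi}^{n}\mathds 1(\om x)$ by the computation in Equation (\ref{eq:Snpm}), I arrive at
\[
	\bd_m^{-\mdim}\sum_{i,\om\in\Sigma^m}\Lomi^{\mdim}\,\mathcal L_{-\mdim\xi}^{n}\mathds 1(\om x)
	\;\leq\;T_{m+n}\;\leq\;
	\bd_m^{\mdim}\sum_{i,\om\in\Sigma^m}\Lomi^{\mdim}\,\mathcal L_{-\mdim\xi}^{n}\mathds 1(\om x).
\]
Since the inner sum has only finitely many ($N^m$) terms and $\mathcal L_{-\mdim\xi}^{n}\mathds 1(\om x)\to\eigenf_{-\mdim\xi}(\om x)$ uniformly as $n\to\infty$ by Equation (\ref{eq:convergenceperron}) (using $\eigenv_{-\mdim\xi}=1$ and $\int\mathds 1\,\textup d\nu_{-\mdim\xi}=1$), the left and right members converge to $\bd_m^{-\mdim}W_m$ and $\bd_m^{\mdim}W_m$ respectively. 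Taking $\liminf_n$ in the left inequality and $\limsup_n$ in the right one, and noting that shifting the index by the fixed amount $m$ leaves $\liminf_k T_k$ and $\limsup_k T_k$ unchanged, gives exactly the sandwich estimate.

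The one delicate point is to make the distortion constant equal to $\bd_m$ rather than a fixed constant such as $\bd_0$; this is why I place the \emph{short} word $\om$ innermost in the decomposition $\tau\om$ and invoke \cref{bd} with $\om$ as the inner word, so that the distortion of the long map $\phi_\tau$ is measured only on the small convex hull $\langle\phi_\om F\rangle$ and is therefore governed by $\bd_m$, which tends to $1$. I expect this bookkeeping to be the main obstacle, everything else being a finite sum, an application of the mean value theorem, and the Perron--Frobenius convergence already recorded in the excerpt. As a by-product the sandwich estimate forces $\liminf_k T_k=\limsup_k T_k$, so the limit defining the constant $c$ in Equation (\ref{constantthm}) in fact exists.
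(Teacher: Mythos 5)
Your proof is correct and follows essentially the same route as the paper's: both rest on the identity $\mathcal L_{-\mdim\xi}^{n}\mathds 1(\om x)=\sum_{\omneu\in\Sigma^n}\lvert\phi'_{\omneu}(\phi_{\om}x)\rvert^{\mdim}$, the uniform Perron--Frobenius convergence (\ref{eq:convergenceperron}), and the mean value theorem combined with \cref{bd} on the inner hull $\langle\phi_{\om}F\rangle$ to trade $\lvert\phi'_{\omneu}(\phi_{\om}x)\rvert\,\Lomi$ for $\lvert L^i_{\omneu\om}\rvert$ up to $\bd_m^{\pm1}$, with the limits taken in the same order ($n\to\infty$ then $m\to\infty$). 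Your packaging as a two-sided sandwich estimate, exploiting the finiteness of the sum over $\Sigma^m$ to pass to the limit in $n$ directly, is a slightly cleaner bookkeeping of the paper's $t,t'$-argument but not a different method.
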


\begin{proof}
We are first going to approximate the eigenfunction $\eigenf_{-\mdim\xi}$ of the Perron-Frobenius operator $\mathcal L_{-\mdim\xi}$. For that we claim that $\mathcal L_{-\mdim\xi}^n 1(x)=\sum_{\omneu\in\Sigma^n}\lvert \phi_{\omneu}'(x)\rvert^{\mdim}$ for each $x\in\Sigma^{\infty}$ and $n\in\mathbb N$, where $1$ is the constant one-function. This can be easily seen by induction.
Since $\mathcal L_{-\mdim\xi}^n 1$ converges uniformly to the eigenfunction $\eigenf_{-\mdim\xi}$ when taking $n\to\infty$ (see Equation (\ref{eq:convergenceperron})) we have that 
\[
	\forall t>0\ \exists M\in\mathbb N\colon\forall n\geq M,\ \forall\, x\in\Sigma^{\infty}\colon \bigg{\lvert}\sum_{\omneu\in\Sigma^n}\lvert\phi_{\omneu}'(x)\rvert^{\mdim}-\eigenf_{-\mdim\xi}(x)\bigg{\rvert}<t.
\]

Furthermore, through \cref{bd} we know that 
\[
	\forall t'>0\ \exists M'\in\mathbb N\colon\forall m\geq M'\colon \lvert\bd_{m}-1\rvert<t'.
\]

Thus, for all $n\geq M$ and $m\geq M'$
\begin{eqnarray*}
	\Upsilon
	&\leq&\sum_{i=1}^{\Q-1}\sum_{\om\in\Sigma^m}\eigenf_{-\mdim\xi}(\om x)\left(\Lomi\bd_{m}\right)^{\mdim}\\
	&\leq& \sum_{i=1}^{\Q-1}\sum_{\om\in\Sigma^m}\left(\sum_{\omneu\in\Sigma^n}\lvert\phi_{\omneu}'(\phi_{\om}x)\rvert^{\mdim}+t\right)\left(\Lomi\bd_{m}\right)^{\mdim}\\
	&\leq& \sum_{i=1}^{\Q-1}\sum_{\om\in\Sigma^m}\sum_{\omneu\in\Sigma^n}\lvert\phi_{\omneu}(L_{\om}^i)\rvert^{\mdim}\bd_{m}^{2\mdim}+ t\sum_{i=1}^{\Q-1}\sum_{\om\in\Sigma^m}\left(\Lomi\bd_{m}\right)^{\mdim}\\
	&\leq&\left(1+t'\right)^{2\mdim}\sum_{i=1}^{\Q-1}\sum_{\om\in\Sigma^m}\sum_{\omneu\in\Sigma^n}\lvert L_{\omneu\om}^i\rvert^{\mdim}+ t(1+t')^{\mdim}\sum_{i=1}^{\Q-1}\sum_{\om\in\Sigma^m}\Lomi^{\mdim}
	\eqdef A_{m,n}
\end{eqnarray*}

Hence, for all $t,t'>0$
\begin{eqnarray*}
	&&\hspace{-0.7cm}\Upsilon
	\leq \liminf_{m\to\infty}\liminf_{n\to\infty} A_{m,n}\\
	&&\hspace{-0.7cm}\leq \left(1+t'\right)^{2\mdim}\liminf_{m\to\infty}\liminf_{n\to\infty}\sum_{i=1}^{\Q-1}\sum_{\om\in\Sigma^{m}}\sum_{\omneu\in\Sigma^n}\lvert L_{\omneu\om}^i\rvert^{\mdim}
	+t(1+t')^{\mdim}\limsup_{m\to\infty}\sum_{i=1}^{\Q-1}\sum_{\om\in\Sigma^m}\Lomi^{\mdim}.
\end{eqnarray*}

Because we have $\sum_{i=1}^{\Q-1}\sum_{\om\in\Sigma^m}\Lomi^{\mdim}\leq\sum_{i=1}^{\Q-1}\sum_{\om\in\Sigma^m}\|\phi'_{\om}\|^{\mdim}\eqdef a_m$, where $\|\cdot\|$ denotes the supremum-norm on $\mathcal C(X)$, and the sequence $(a_m)_{m\in\mathbb N}$ is bounded by Lemma 4.2.12 of \cite{Urbanski_Buch}, letting $t$ and $t'$ tend to zero then gives the assertion. 

The same arguments can be used to show that $\limsup_{m\to\infty}\sum_{i=1}^{\Q-1}\sum_{\om\in\Sigma^m}\Lomi^{\mdim}$ is a lower bound in the second case.
\end{proof}

\subsection{The Nonlattice Case}
\begin{proof}[Proof of Part {\rm(\ref{curvatureresult:nonlattice})} of \cref{curvatureresult}]
In this proof we fix the notation from the beginning of Section \ref{sec:proofs}. 

If $\mathds 1_{Z\vp}$ is identically zero, we immediately obtain $C_0^f(F,(-\infty,\bb])=0=\nu(F\cap(-\infty,\bb])$. Therefore, in the following, we assume that $\mathds 1_{Z\vp}$ is not identically zero. Since $\mathds 1_{Z\vp}$ is Hölder continuous, by combining Equations (\ref{eq:Aomover}), (\ref{eq:Aomunder}) and (\ref{eq:Snpm}), we see that \Cref{thmlalley} can be applied to  $\overline{A}_{\om}^i(x,\e,Z)$ and $\underline{A}_{\om}^i(x,\e,Z)$ giving the following asymptotics.

\begin{eqnarray}
	\overline{A}_{\om}^i(x,\e,Z)
	&\sim& \frac{\int\mathds 1_{Z\vp}\textup{d}\nu_{-\mdim\xi}}{\mdim\int\xi\textup{d}\mu_{-\mdim\xi}}\cdot\eigenf_{-\mdim\xi}(\om x)\cdot(2\e)^{-\mdim}\big(\Lomi\bd_{n(\om)}\big)^{\mdim}\quad\text{and}\label{eq:Aomoverasym}\\
	\underline{A}_{\om}^i(x,\e,Z)
	&\sim&  \frac{\int\mathds 1_{Z\vp}\textup{d}\nu_{-\mdim\xi}}{\mdim\int\xi\textup{d}\mu_{-\mdim\xi}}
\cdot\eigenf_{-\mdim\xi}(\om x)\cdot(2\e)^{-\mdim}\big(\Lomi\bd_{n(\om)}^{-1}\big)^{\mdim}\label{eq:Aomunderasym}
\end{eqnarray}
as $\e\to 0$ uniformly for $x\in\Sigma^{\infty}$.
We first put our focus on finding an upper bound for $\overline{C}_0^f(F,(-\infty,\bb])$. As in the statement of this theorem set $\entro_{\mu_{-\mdim\xi}}\defeq\mdim\int\xi\textup{d}\mu_{-\mdim\xi}$. Combining the Equations (\ref{eq:fcmoben}) and (\ref{eq:Aomoverasym}), we obtain for $x\in\Fun$ and all $m\in\mathbb N$
\begin{eqnarray*}
	\overline{C}_0^f(F,(-\infty,\bb])
	\leq \frac{2^{-\mdim}}{\entro_{\mu_{-\mdim\xi}}} \sum_{i=1}^{\Q-1}\sum_{\om\in\Sigma^m}\eigenf_{-\mdim \xi}(\om x)\left(\Lomi\bd_{m}\right)^{\mdim}\int_{\Sigma^{\infty}}\mathds 1_{Z\vp}\textup{d}\nu_{-\mdim\xi}.
\end{eqnarray*}

Now an application of \cref{Upsilon} implies
\begin{eqnarray}\label{Xisup:nonlattice}
	\overline{C}_0^f(F,(-\infty,\bb])
	\leq \frac{2^{-\mdim}}{\entro_{\mu_{-\mdim\xi}}} 
	\liminf_{m\to\infty}\sum_{i=1}^{\Q-1}\sum_{\om\in\Sigma^m}\Lomi^{\mdim}\int_{\Sigma^{\infty}}\mathds 1_{Z\vp}\textup{d}\nu_{-\mdim\xi}.
\end{eqnarray}

Analogously, one can conclude that
\begin{eqnarray}\label{Xiinf:nonlattice}
	\underline{C}_0^f(F,(-\infty,\bb])
	\geq \frac{2^{-\mdim}}{\entro_{\mu_{-\mdim\xi}}}\limsup_{m\to\infty}\sum_{i=1}^{\Q-1}\sum_{\om\in\Sigma^m}\Lomi^{\mdim}\int_{\Sigma^{\infty}}\mathds 1_{Z\vp}\textup{d}\nu_{-\mdim\xi}.
\end{eqnarray}

Combining the inequalities (\ref{Xisup:nonlattice}) and (\ref{Xiinf:nonlattice}) yields that all the limits occurring therein exist and are equal. Moreover, the $\mdim$-conformal measure introduced in (\ref{conformalmeasure}) and $\nu_{-\mdim\xi}$ satisfy the relation $\nu_{-\mdim\xi}(\mathds 1_{Z\vp})=\nu((-\infty,\bb])$. Therefore, 
\[
	C_0^f(F,(-\infty,\bb])
=\frac{2^{-\mdim}}{\entro_{\mu_{-\mdim\xi}}}\lim_{n\to\infty}\sum_{i=1}^{\Q-1}\sum_{\om\in\Sigma^n}\Lomi^{\mdim}\cdot \nu(F\cap(-\infty,\bb])
\]
holds for every $\bbb\in\mathbb R\setminus F$. As $\mathbb R\setminus F$ is dense in $\mathbb R$ the assertion concerning the 0-th fractal curvature measure follows.
The result on the 1-st fractal curvature measure now follows by applying \cref{lemWinterRataj}, as for every $\bbb\in\mathbb R\setminus F$ we have that $F_{\e}\cap(-\infty,\bb]=\left(F\cap(-\infty,\bb]\right)_{\e}$ for sufficiently small $\e>0$.
\end{proof}

\subsection{The Lattice Case}
This subsection addresses Part (\ref{curvatureresult:lattice}) of \cref{curvatureresult} and \cref{conformalMinkowski}.
For clarity, we are going to split the proof of Part {(\ref{curvatureresult:lattice})} of \cref{curvatureresult} into two parts. First, we are going to show the statement on the nonexistence of the fractal curvature measures. For that we need the following lemma.

\begin{lem}\label{periodic}
  Let $F$ denote a self-conformal set associated with the cIFS $\Phi\defeq\{\phi_1,\ldots,\phi_N\}$. Let $\mdim$ denote the Minkowski dimension of $F$ and let $B\subseteq\mathbb R$ denote a Borel set for which $F_{\e}\cap B=(F\cap B)_{\e}$ for all sufficiently small $\e>0$.
  Assume that there exists a positive, bounded, periodic and Borel-measurable function $f\colon\mathbb R^+\to\mathbb R^+$ which has the following properties.
  \begin{enumerate}
  \item\label{thislemcond1} $f$ is not equal to an almost everywhere constant function.
  \item\label{thislemcond2} There exists a sequence $(a_m)_{m\in\mathbb N}$, where $a_m>0$ for all $m\in\mathbb N$ and $a_m\to 1$ as $m\to\infty$ and a constant $c\in\mathbb R$ such that the following property is satisfied. For all $t>0$ and $m\in\mathbb N$ there exists an $M\in\mathbb N$ such that for all $T\geq M$
    \begin{eqnarray}\label{lem:nonexist}
      &&(1-t)a_m^{-\mdim}f(T-\ln a_m)-c\ee^{-\mdim T}\nonumber\\
      &&\qquad\quad\leq \ee^{-\mdim T}\leb^0(\partial F_{\ee^{-T}}\cap B)
      \leq (1+t)a_m^{\mdim}f(T+\ln a_m)+c\ee^{-\mdim T}.
    \end{eqnarray}
  \end{enumerate}
  Then for $k\in\{0,1\}$ we have
  \[
  \underline{C}_k^f(F,B)<\overline{C}_k^f(F,B).
  \]
\end{lem}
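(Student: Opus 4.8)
The plan is to pass to logarithmic coordinates and reduce the entire statement to one elementary oscillation principle for periodic functions. Write $\e=\ee^{-T}$ and set $g(T)\defeq\ee^{-\mdim T}\leb^0(\partial F_{\ee^{-T}}\cap B)$. Since $\leb^1(F)=0$, \cref{prop:interval} gives $s_0(F)=\mdim$, so that $\overline{C}_0^f(F,B)=\tfrac12\limsup_{T\to\infty}g(T)$ and $\underline{C}_0^f(F,B)=\tfrac12\liminf_{T\to\infty}g(T)$; thus for $k=0$ it suffices to prove $\liminf_{T\to\infty}g(T)<\limsup_{T\to\infty}g(T)$. The device I would isolate first is the following squeeze principle: if $\Psi$ is bounded, $p$-periodic and not almost everywhere constant, and if a function $G$ satisfies, for every $t>0$, every $m\in\mathbb N$ and all large $T$, the bound $(1-t)a_m^{-\mdim}\Psi(T-\ln a_m)+\rho(T)\leq G(T)\leq(1+t)a_m^{\mdim}\Psi(T+\ln a_m)+\rho(T)$ for some $\rho(T)\to 0$, then $\liminf_{T\to\infty}G(T)<\limsup_{T\to\infty}G(T)$.

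To prove the principle I would record that a bounded $p$-periodic $\Psi$ has $\liminf_{T\to\infty}\Psi(T)=\inf_{[0,p)}\Psi$ and $\limsup_{T\to\infty}\Psi(T)=\sup_{[0,p)}\Psi$, because every tail $\{T\geq S\}$ contains complete periods, so shifting by the fixed amount $\mp\ln a_m$ is irrelevant; and that ``not almost everywhere constant'' forces $\inf_{[0,p)}\Psi<\sup_{[0,p)}\Psi$ (otherwise $\Psi$ would a.e.\ equal a single value). Feeding the lower bound into $\limsup$ and the upper bound into $\liminf$, the vanishing term $\rho(T)$ drops out and one gets $\limsup G\geq(1-t)a_m^{-\mdim}\sup\Psi$ and $\liminf G\leq(1+t)a_m^{\mdim}\inf\Psi$; letting $t\to 0$ and $a_m\to 1$ yields $\liminf G\leq\inf\Psi<\sup\Psi\leq\limsup G$. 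Condition (\ref{lem:nonexist}) is exactly this hypothesis with $G=g$, $\Psi=f$ and $\rho(T)=c\ee^{-\mdim T}$, so property (\ref{thislemcond1}) settles the case $k=0$.

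For $k=1$ the plan is to realise the first curvature as an exponential average of $g$. Since $F_\e\cap B=(F\cap B)_\e$, Stach\'o's theorem (\cref{Stachothm}) applied to $F\cap B$ gives $\leb^1(F_\e\cap B)=\int_0^\e\leb^0(\partial(F\cap B)_u)\,\textup{d}u$, the lower term $\leb^1(\overline{F\cap B})$ vanishing because $\overline{F\cap B}\subseteq F$. The counting function $\leb^0(\partial(F\cap B)_u)$ coincides with $\leb^0(\partial F_u\cap B)$ except for the endpoints of $(F\cap B)_u$ that lie strictly inside $F_u$ (the contribution of $\partial B$); these are $o(u^{-\mdim})$ and hence immaterial after rescaling by $u^{\mdim}$. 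Using $s_1(F)=\mdim-1$ and substituting $u=\ee^{-S}$, I would rewrite $\e^{\mdim-1}\leb^1(F_\e\cap B)=G(T)+o(1)$ with $G(T)=\int_0^\infty\ee^{-(1-\mdim)r}g(T+r)\,\textup{d}r$. Integrating the bounds on $g$ against the weight $\ee^{-(1-\mdim)r}$ shows that $G$ obeys the squeeze principle with $\Psi=F_{\mathrm{avg}}$, where $F_{\mathrm{avg}}(T)\defeq\int_0^\infty\ee^{-(1-\mdim)r}f(T+r)\,\textup{d}r$. This $F_{\mathrm{avg}}$ is continuous and $p$-periodic, and it is non-constant: were $F_{\mathrm{avg}}\equiv C$, then $\ee^{(1-\mdim)T}\int_T^\infty\ee^{-(1-\mdim)S}f(S)\,\textup{d}S\equiv C$, and differentiating would force $f\equiv C(1-\mdim)$ almost everywhere, contradicting (\ref{thislemcond1}). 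The squeeze principle then gives $\liminf_{T\to\infty}G(T)<\limsup_{T\to\infty}G(T)$, i.e.\ $\underline{C}_1^f(F,B)<\overline{C}_1^f(F,B)$.

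I expect the main obstacle to be the boundary bookkeeping in the third paragraph: for a general Borel set $B$ one must use the full force of $F_u\cap B=(F\cap B)_u$ (valid for all small $u$) to confine the ``extra'' endpoints coming from $\partial B$ to a bounded set, so that they contribute $o(u^{-\mdim})$ and the bounds on $g$ transfer verbatim to $\e^{\mdim-1}\leb^1(F_\e\cap B)$. A secondary point worth stressing is why one cannot shortcut $k=1$ through the Rataj--Winter sandwich (\cref{lemWinterRataj}): that sandwich only traps the interval $[\underline{C}_1^f,\overline{C}_1^f]$ inside the corresponding $\leb^0$-interval and therefore cannot by itself propagate strict oscillation, whereas the exponential-average representation does, precisely because $F_{\mathrm{avg}}$ inherits non-constancy from $f$. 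Both the integrability of the weight $\ee^{-(1-\mdim)r}$ and the differentiation argument rely on $\mdim<1$, which holds here since $\leb^1(F)=0$ and $F$ possesses genuine primary gaps.
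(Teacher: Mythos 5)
Your proposal is correct and follows essentially the same route as the paper: for $k=0$ you exploit the oscillation of the periodic, non-a.e.-constant $f$ surviving the multiplicative perturbations $(1\pm t)a_m^{\pm\mdim}$ and the decaying additive error (the paper does this by picking two explicit points $\tilde T_1,\tilde T_2$ with $f(\tilde T_2)>f(\tilde T_1)$, you by comparing $\limsup$ and $\liminf$ with $\sup f$ and $\inf f$ via periodicity, which is the same idea in slightly cleaner packaging), and for $k=1$ you use Stach\'o's theorem together with the exponential average $T\mapsto\int_0^\infty f(s+T)\ee^{(\mdim-1)s}\,\textup{d}s$ and the identical differentiation argument for its non-constancy. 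No gaps beyond what the paper itself glosses over (the boundary bookkeeping for $\partial B$ and the implicit use of $\leb^1(F)=0$ and $\mdim<1$), both of which you explicitly flag.
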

\begin{proof}
  We first cover the case $k=0$. Since $f$ is positive and not equal to an almost everywhere constant function, there exist $\tilde{T}_1,\tilde{T}_2>0$ such that $R\defeq f(\tilde{T}_2)/f(\tilde{T}_1)>1$. Choose $m\in\mathbb N$ so that $a_m^{2\mdim}<\sqrt{R}$ and choose $t>0$ such that $(1+t)/(1-t)<\sqrt{R}$. Then $\tilde{R}\defeq (1-t)a_m^{-\mdim}f(\tilde{T}_2)-(1+t)a_m^{\mdim}f(\tilde{T}_1)>0$. 
  By Condition (\ref{thislemcond2}) we can find an $M\in\mathbb N$ for these $t$ and $m$ such that for all $T\geq M$ Equation (\ref{lem:nonexist}) is satisfied.
  Because of the periodicity of $f$ we can find $T_1,T_2\geq M$ such that $f(\tilde{T}_1)=f(T_1+\ln a_m)$ and $f(\tilde{T}_2)=f(T_2-\ln a_m)$. Moreover, we can assume that $T_1,T_2$ are so large that $c\ee^{-\mdim T_1}+c\ee^{-\mdim T_2}\leq\tilde{R}/2$. Then
  \begin{eqnarray*}
    \ee^{-\mdim T_1}\leb^0(\partial F_{\ee^{-T_1}}\cap B)
    &\leq& (1+t) a_m^{\mdim}f(T_1+\ln a_m)+c\ee^{-\mdim T_1}\\
    &\leq& (1-t) a_m^{-\mdim} f(T_2-\ln a_m)-\tilde{R}/2-c\ee^{-\mdim T_2}\\
    &<& \ee^{-\mdim T_2}\leb^0(\partial F_{\ee^{-T_2}}\cap B).
  \end{eqnarray*}
  Because of the periodicity of $f$ this proves the case $k=0$. For $k=1$ observe that the function $g\colon\mathbb R^+\to\mathbb R^+$ defined by 
  \[
  g(T)\defeq\int_0^{\infty}f(s+T)\ee^{(\mdim-1)s}\textup{d}s
  \]
  is periodic. Also, $g$ is not a constant function. Since if it was, then $0=g(0)-g(T)$ for all $T\geq 0$. This would imply $\int_T^{\infty}f(s)\ee^{(\mdim-1)s}\textup{d}s=\ee^{(\mdim-1)T}\int_0^{\infty}f(s)\ee^{(\mdim-1)s}\textup{d}s$ for all $T\geq 0$. Differentiating with respect to $T$ would imply that $f$ itself is constant almost everywhere which is a contradiction. 
  Using that $F_{\e}\cap B=(F\cap B)_{\e}$ for sufficiently small $\e>0$ and Stachó's Theorem (\Cref{Stachothm}), we obtain for sufficiently large $T\geq 0$
  \begin{eqnarray*}
    &&\ee^{-T(\mdim-1)}\leb^1(F_{\ee^{-T}}\cap B)
    = \ee^{-T(\mdim-1)}\int_{T}^{\infty}\leb^0(\partial F_{\ee^{-s}}\cap B)\ee^{-s}\textup{d}s\\
    &&\qquad\qquad\leq \ee^{-T(\mdim-1)}(1+t) a_m^{\mdim}\int_{T}^{\infty} f(s+\ln a_m)\ee^{s(\mdim-1)}\textup{d}s+c\ee^{-T\mdim}\\
    &&\qquad\qquad= (1+t) a_m^{\mdim} g(T+\ln a_m)+c\ee^{-\mdim T}.
  \end{eqnarray*}
  Analogously, we obtain
  \begin{eqnarray*}
    \ee^{-T(\mdim-1)}\leb^1(F_{\ee^{-T}}\cap B)
    \geq (1-t) a_m^{-\mdim} g(T-\ln a_m)-c\ee^{-\mdim T}.
  \end{eqnarray*}
  Therefore, the same arguments which were used in the proof of the case $k=0$ imply that 
  \[
  \liminf_{\e\to 0}\e^{\mdim-1}\leb^1(F_{\e}\cap B)
  <\limsup_{\e\to 0}\e^{\mdim-1}\leb^1(F_{\e}\cap B).
  \]
\end{proof}

\begin{proof}[Proof of Part {\rm (\ref{curvatureresult:lattice})} of \cref{curvatureresult} (nonexistence)]
  We want to apply \cref{periodic} in order to show that there exists a Borel set $B\subseteq\mathbb R$ for which $\underline{C}_k^f(F,B)<\overline{C}_k^f(F,B)$ for $k\in\{0,1\}$ from which we then deduce that the fractal curvature measures do not exist. 
	For applying \cref{periodic} we first introduce a family $\Delta$ of nonempty Borel subsets of $\Sigma^{\infty}$. For every $\kappa\in\Delta$ we then construct a pair $(B(\kappa), f_{\kappa})$ which consists of a nonempty Borel set $B(\kappa)\subseteq\mathbb R$ satisfying $F_{\e}\cap B(\kappa)=(F\cap B(\kappa))_{\e}$ for all sufficiently small $\e>0$ and a positive bounded periodic Borel-measurable function $f_{\kappa}\colon\mathbb R^+\to\mathbb R^+$, such that Item (\ref{thislemcond2}) of \cref{periodic} is satisfied for $B=B(\kappa)$ and $f=f_{\kappa}$.
	Then, we show that there always exists a $\kappa\in\Delta$ for which $f_{\kappa}$ is not equal to an almost everywhere constant function, verifying Item (\ref{thislemcond1}) of \cref{periodic}.
	
	Let us begin by introducing the family $\Delta$. First, fix an $n\in\mathbb N\cup\{0\}$ and define
	\begin{eqnarray*}
		\hspace{-0.5cm}\Delta_n\defeq\Big\{\bigcup_{i=1}^l[\kappa^{(i)}]&\hspace{-0.2cm}\mid&\hspace{-0.3cm}\kappa^{(i)}\in\Sigma^{n},\, l\in\{1,\ldots,N^n\},\, \bigcup_{i=1}^l\langle\phi_{\kappa^{(i)}}F\rangle\ \text{is an interval},\,\\
		&&\hspace{-0.3cm}\bigcup_{i=1}^l\phi_{\kappa^{(i)}}F\cap\phi_{\om}F=\emptyset\ \text{for every}\ \om\in\Sigma^n\setminus\{\kappa^{(1)},\ldots,\kappa^{(l)}\}\Big\}.
	\end{eqnarray*}
	(Note that if the strong seperation condition was satisfied, then $\Delta_n=\{[\om]\mid\om\in\Sigma^n\}$.)
	We remark that the condition $\leb^1(F)=0$ implies that $\kappa\subsetneq\Sigma^{\infty}$ for every $\kappa\in\Delta_n$, whenever $n\in\mathbb N\setminus\{0\}$.
	Further, note that $\Delta_n\neq\emptyset$ for all $n\in\mathbb N$ because of the OSC and set $\Delta\defeq\bigcup_{n\in\mathbb N\cup\{0\}}\Delta_n$. 
	Now, fix an $n\in\mathbb N\cup\{0\}$ and a $\kappa=\bigcup_{i=1}^l[\kappa^{(i)}]\in\Delta_n$ and choose $\theta>0$ such that $\bigcup_{i=1}^l\langle\phi_{\kappa^{(i)}}F\rangle_{2\theta}\cap \phi_{\om}F=\emptyset$ for every $\om\in\Sigma^n\setminus\{\kappa^{(1)},\ldots,\kappa^{(l)}\}$.
	Then $B(\kappa)\defeq \bigcup_{i=1}^l\langle\phi_{\kappa^{(i)}}F\rangle_{\theta}$ is a nonempty Borel subset of $\mathbb R$ satisfying $F_{\e}\cap B(\kappa)=(F\cap B(\kappa))_{\e}$ for all $\e<\theta$.
	
	For constructing the function $f_{\kappa}$ fix an $m\in\mathbb N$ and choose $M\in\mathbb N$ so that $\ee^{-M}<\theta$ and that for every $\om\in\Sigma^m$ all main gaps of the sets $\phi_{\om}F$ which lie in $B(\kappa)$ are of length greater than or equal to $2\ee^{-M}$. Then for all $T\geq M$ we have
	\begin{eqnarray*}
    \leb^0\left(\partial F_{\ee^{-T}}\cap B(\kappa)\right)/2
    &=& \sum_{i=1}^{Q-1}\card\{\om\in\Sigma^*\mid L_{\om}^i\subseteq B(\kappa),\ \Lomi\geq2\ee^{-T}\}+1\\
    &\leq& \sum_{i=1}^{Q-1}\sum_{\om\in\Sigma^m}\Xi_{\om}^i(\ee^{-T})+\underbrace{\sum_{j=1}^{m-n-1}(Q-1)\cdot N^{j-1}+1}_{\eqdef c_m},
  \end{eqnarray*}
  where we agree that $\sum_{j=1}^{m-n-1}(Q-1)\cdot N^{j-1}=0$ if $m-n-1<1$ and where
  \begin{eqnarray*}
    \Xi_{\om}^i(\ee^{-T})\defeq\card\{\omneu\in\Sigma^*\mid L_{\omneu\om}^i\subseteq B(\kappa),\ \lvert L_{\omneu\om}^i\rvert\geq 2\ee^{-T}\}.
  \end{eqnarray*}
  Likewise
  \begin{eqnarray*}
    \leb^0\left(\partial F_{\ee^{-T}}\cap B(\kappa)\right)/2
    &\geq& \sum_{i=1}^{Q-1}\sum_{\om\in\Sigma^m}\Xi_{\om}^i(\ee^{-T}).
  \end{eqnarray*}

  Next, we use the lattice property of the cIFS $\Phi$. Since $\xi$ is a lattice function, there exist $\z,\psi\in\mathcal C(\Sigma^{\infty})$ such that
\[
	\xi-\z=\psi-\psi\circ\sigma
\]
and such that $\z$ is a function whose range is contained in a discrete subgroup of $\mathbb R$. Let $\aaa>0$ be the maximal real number such that $\z(\Sigma^{\infty})\subseteq\aaa\mathbb Z$. Recall from the beginning of Section \ref{sec:proofs} that the hypotheses and \cref{rmk:alpha} imply that $\xi$ is Hölder continuous and strictly positive and that the unique $s>0$ for which $\eigenv_{-s\xi}=1$ is the Minkowski dimension $\mdim$ of $F$. Moreover, recall that we assume without loss of generality that $\{0,1\}\subseteq F\subseteq[0,1]$. 
  We define $\tilde{\psi}\defeq\mdim^{-1}\ln\eigenf$, where $\eigenf\in\mathcal{F}_{\alpha}(X)$ is the positive function which is uniquely defined through \cref{UrbanskiFortsetzung}. $\tilde{\psi}$ satisfies the equation $\tilde{\psi}\circ\bij=\psi$ since $\eigenf$ satisfies
  \[
  \eigenf\circ\bij
  =\eigenf_{-\mdim\xi}=\frac{\textup{d}\mu_{-\mdim\xi}}{\textup{d}\nu_{-\mdim\xi}}
  =\frac{\textup{d}\mu_{-\mdim\z}}{\ee^{-\mdim\psi}\textup{d}\nu_{-\mdim\z}}=\ee^{\mdim\psi}.
  \]
  We define the function $\tilde{g}\colon [0,1]\to\mathbb R$ by $\tilde{g}(x)\defeq\int_0^x \ee^{\tilde{\psi}(y)}\textup{d}y/A$ for $x\in [0,1]$,where $A\defeq\int_0^1 \ee^{\tilde{\psi}(y)}\textup{d}y$. 
  As $\tilde{\psi}$ is $\alpha$-Hölder continuous, the Fundamental Theorem of Calculus implies that $\tilde{\psi}-\ln A=\ln \tilde{g}'$. 
  Moreover, the continuity of $\tilde{\psi}$ implies that $\tilde{\psi}$ is bounded on $[0,1]$. Therefore, $\tilde{g}'$ is bounded away from both 0 and $\infty$ and thus $\tilde{g}$ is invertible. Note that $\tilde{g}([0,1])=[0,1]$, set $g\defeq \tilde{g}^{-1}\colon [0,1]\to[0,1]$ and extend $g$ to a $\mathcal{C}^{1+\alpha}(\mathcal{U})$ function on an open neighbourhood $\mathcal U$ of $[0,1]$ such that $\lvert g'\rvert>0$ on $\mathcal U$. 
  Define $R_i\defeq g^{-1}\circ\phi_i\circ g$ for $i\in\{1,\ldots,N\}$ and $K\defeq g^{-1}(F)\subseteq[0,1]$. 
  Then setting $\bij_K\defeq g^{-1} \circ\bij$, we have
  \begin{eqnarray*}
    -\ln\lvert R'_{\om_1}(\bij_K\sigma\om)\rvert
    &=&-\ln \tilde{g}'(\phi_{\om_1}g\bij_K\sigma\om)-\ln\lvert\phi'_{\om_1}(g\bij_K\sigma\om)\rvert-\ln\lvert g'(\bij_K\sigma\om)\rvert\\
    &=&-\tilde{\psi}(\phi_{\om_1}\bij\sigma\om)+\ln A+\xi(\om)+\ln \tilde{g}'(g\bij_K\sigma\om)\\
    &=&\xi(\om)-\psi(\om)+\psi\circ\sigma(\om).
  \end{eqnarray*}
  Thus, $\z(\om)=-\ln\lvert R'_{\om_1}(\bij_K\sigma\om)\rvert$ for $\om\in\Sigma^{\infty}$. Since the range of $\z$ is contained in a discrete subgroup of $\mathbb R$ and $\xi$ and $\psi$ are bounded on $\Sigma^{\infty}$, $\z$ in fact takes a finite number of values.
  Moreover, $R'_i$ is $\alpha$-Hölder continuous. Therefore, there exists an $\tilde{M}\in\mathbb N$ such that for all $m\geq\tilde{M}$ we have that
\begin{eqnarray}\label{eq:rom}
  \forall\om\in\Sigma^m,\, \forall i\in\{1,\ldots,N\}\ \exists\, r_i^{\om}\in\mathbb R\colon\forall x\in[\om]\colon\ R'_i(\bij_K x)=r_i^{\om}.
\end{eqnarray}
Note, that $\ln r_i^{\om}\in\aaa\mathbb{Z}$ for all $i\in\{1,\ldots,N\}$ and $\om\in\Sigma^m$.
>From the fact that $\phi_1,\ldots,\phi_N$ are contractions and $g'$ is Hölder continuous and bounded away from $0$, one can deduce that there exists an iterate of $R\defeq\{R_1,\ldots,R_N\}$ which solemnly consists of contractions.
Without loss of generality we assume that $R_1,\ldots,R_N$ are contractions themselves. Then clearly, $R\defeq\{R_1,\ldots, R_N\}$ is a cIFS and therefore satisfies the bounded distortion property (see \cref{bd}). Let the associated sequence of bounded distortion constants be denoted by $(\bd_n)_{n\in\mathbb N}$.
Denote by $\tilde{L}^i$ the primary gaps of $K$ and by $\tilde{L}^i_{\om}$ the main gaps of $R_{\om} K$, where $i\in\{1,\ldots,N\}$ and $\om\in\Sigma^*$.
Let $c_g$ be the Hölder constant of $g'$ and let $k_g>0$ be such that $\lvert g'\rvert\geq k_g$ on $\mathcal U$. Since $K\subseteq[0,1]$ we have the following for all $x,y\in\langle R_{\om}K\rangle$, where $\om\in\Sigma^n$ and $n\in\mathbb N$. 
\begin{eqnarray}\label{eq:bdg}
  \left\lvert\frac{g'(x)}{g'(y)}\right\rvert
  \leq \left\lvert\frac{g'(x)-g'(y)}{g'(y)}\right\rvert+1
  \leq \frac{c_g\lvert x-y\rvert^{\alpha}}{k_g}+1
  \leq \max_{\om\in\Sigma^n}\frac{c_g\langle R_{\om}K\rangle^{\alpha}}{k_g}+1\eqdef \bdneu_n.\hphantom{..}
\end{eqnarray} 
Clearly, $\bdneu_n\to 1$ as $n\to\infty$.
Take $m\geq\tilde{M}$ and for $\om\in\Sigma^m$ write $\om=\om''\om'$, where $\om''\in\Sigma^{m-\tilde{M}}$ and $\om'\in\Sigma^{\tilde{M}}$.  
Combining Equations (\ref{eq:rom}) and (\ref{eq:bdg}) we now obtain that for $\omneu\in\Sigma^*$ and $i\in\{1,\ldots,Q-1\}$ we have for an arbitrary $x\in K^{\text{unique}}$
\begin{eqnarray*}
  \lvert L_{\omneu\om}^{i}\rvert
  &=& \lvert g\tilde{L}_{\omneu\om}^{i}\rvert
  \leq \lvert g'(R_{\omneu\om}x)\rvert\bdneu_{m}\lvert R'_{\omneu}(R_{\om}x)\rvert\bd_{m}\lvert\tilde{L}_{\om}^i\rvert\\ 
  &=& \lvert (g\circ R_{\omneu})'(R_{\om}x)\rvert\bdneu_{m}\bd_{m}\lvert\tilde{L}_{\om}^i\rvert
  = \lvert (\phi_{\omneu}\circ g)'(R_{\om}x)\rvert\bdneu_{m}\bd_{m}\lvert R_{\om''}\tilde{L}_{\om'}^i\rvert\\ 
  &\leq& \lvert \phi'_{\omneu}(gR_{\om}x)\rvert\lvert g'(R_{\om}x)\rvert\bdneu_{m}\bd_{m} r_{\om''}^{\om'}\bd_{\tilde{M}}\lvert\tilde{L}_{\om'}^i\rvert\\
  &=&\exp\Big(-S_{n(\omneu)}\xi(\omneu\om x)-\psi(\om x)+\ln(\underbrace{A\bdneu_{m}\bd_{m}\bd_{\tilde{M}}}_{\eqdef d_m} r_{\om''}^{\om'}\lvert\tilde{L}_{\om'}^i\rvert)\Big).
\end{eqnarray*}
Therefore, for $x\in K^{\text{unique}}$, $m\geq\max\{M,\tilde{M}\}$ and $\om\in\Sigma^m$
\begin{eqnarray*}
&&\Xi_{\om}^i(\ee^{-T})
\leq\card\{\omneu\in\Sigma^*\mid L_{\omneu\om}^i\subseteq B(\kappa),\\
&&\hphantom{\Xi_{\om}^i(\ee^{-T})\leq\card\{\omneu\in\Sigma^*\mid } S_{n(\omneu)}\xi(\omneu\om x)\leq -\ln(2\ee^{-T})+\ln(d_{m} r_{\om''}^{\om'}\lvert\tilde{L}_{\om'}^i\rvert)-\psi(\om x)\}.
\end{eqnarray*}
By construction we have $\mathds 1_{\kappa}\in\mathcal{F}_{\alpha}(\Sigma^{\infty})$.
Recalling that $\aaa>0$ denotes the maximal real number for which $\z(\Sigma^{\infty})\subseteq\aaa\mathbb Z$, an application of \cref{thmlalleyneu} hence yields
	\begin{eqnarray}
    &&\hspace{-1.55cm}\leb^0(\partial F_{\ee^{-T}}\cap B(\kappa))/2-c_m\nonumber\\
    &&\hspace{-1.55cm}\leq \sum_{i=1}^{Q-1}\sum_{\om\in\Sigma^{m}}\sum_{n=0}^{\infty}\sum_{y\colon\sigma^n y=\om x}\mathds 1_{\kappa\vp}(y)\cdot\mathds 1_{\{S_{n}\xi(y)\leq -\ln(2\ee^{-T})+\ln(d_{m}r_{\om''}^{\om'}\lvert\tilde{L}_{\om'}^i\rvert)-\psi(\om x)\}\vp}\nonumber\\
    &&\hspace{-1.55cm}\sim\sum_{i=1}^{Q-1}\hspace{-0.05cm}\sum_{\om\in\Sigma^{m}} \hspace{-0.1cm}\frac{\aaa\eigenf_{-\mdim\z}(\om x)\int\mathds 1_{\kappa\vp}(y)\ee^{-\mdim\aaa\left\lceil\hspace{-0.05cm}\frac{\psi(y)-\psi(\om x)}{\aaa}+\frac{1}{\aaa}\ln{\frac{2\ee^{-T}}{d_{m}r_{\om''}^{\om'}\lvert\tilde{L}_{\om'}^i\rvert}}+\frac{\psi(\om x)}{\aaa}\hspace{-0.05cm}\right\rceil}\textup{d}\nu_{-\mdim\z}(y)}{\left(1-\ee^{-\mdim\aaa}\right)\int\z\textup{d}\mu_{-\mdim\z}}.	\label{long}
  \end{eqnarray}
  
  Define $W\defeq\aaa\left(1-\ee^{-\mdim\aaa}\right)^{-1}\left(\int\z\textup{d}\mu_{-\mdim\z}\right)^{-1}$ and note that $\eigenf_{-\mdim\z}\equiv 1$. Using that $\ln r_{\om''}^{\om'}\in\aaa\mathbb Z$ for every $\om''\in\Sigma^{m-\tilde M}$ and $\om'\in\Sigma^{\tilde M}$ and that $\sum_{\om''\in\Sigma^{m-\tilde M}}(r_{\om''}^{\om'})^{\mdim}=1$ for every fixed $\om'\in\Sigma^{\tilde M}$ since $\sum_{\om''\in\Sigma^{m-\tilde M}}(r_{\om''}^{\om'}\cdot\mathds 1_{[\om']\vp})^{\mdim}=\eigenf_{-\mdim\z}\equiv 1$, Equation (\ref{long}) simplifies to 
  \begin{eqnarray*}
    &&\sum_{i=1}^{Q-1}\sum_{\om\in\Sigma^{m}}W (r_{\om''}^{\om'})^{\mdim}\int_{\Sigma^{\infty}}\mathds 1_{\kappa\vp}(y) \ee^{-\mdim\aaa\left\lceil\frac{\psi(y)}{\aaa}+\frac{1}{\aaa}\ln{\frac{2\ee^{-T}}{d_{m}\lvert\tilde{L}_{\om'}^i\rvert}}\right\rceil}\textup{d}\nu_{-\mdim\z}(y)\\
    &&\qquad\qquad=\sum_{i=1}^{Q-1}\sum_{\om'\in\Sigma^{\tilde{M}}} W \int_{\Sigma^{\infty}}\mathds 1_{\kappa\vp}(y) \ee^{-\mdim\aaa\left\lceil\frac{\psi(y)}{\aaa}+\frac{1}{\aaa}\ln{\frac{2\ee^{-T}}{d_{m}\lvert\tilde{L}_{\om'}^i\rvert}}\right\rceil}\textup{d}\nu_{-\mdim\z}(y).
  \end{eqnarray*}
  Hence, for all $t>0$ there exists an $M'\geq \max\{M,\tilde M\}$ such that for all $T\geq M'$ we have
  \begin{eqnarray*}
    &&\hspace{-0.7cm}\ee^{-\mdim T}\leb^0(\partial F_{\ee^{-T}}\cap B(\kappa))/2\\
    &&\hspace{-0.7cm}\quad\leq (1+t)\ee^{-\mdim T}\sum_{i=1}^{Q-1}\sum_{\om'\in\Sigma^{\tilde{M}}} W \int_{\Sigma^{\infty}}\mathds 1_{\kappa\vp}(y) \ee^{-\mdim\aaa\left\lceil\frac{\psi(y)}{\aaa}+\frac{1}{\aaa}\ln{\frac{2\ee^{-T}}{d_{m}\lvert\tilde{L}_{\om'}^i\rvert}}\right\rceil}\textup{d}\nu_{-\mdim\z}(y)+c_m\ee^{-\mdim T}.
  \end{eqnarray*}
	
  Defining the function $f_{\kappa}\colon\mathbb R^+\to\mathbb R^+$ by
  \[
  f_{\kappa}(T)\defeq \ee^{-\mdim T}\sum_{i=1}^{Q-1}\sum_{\om'\in\Sigma^{\tilde{M}}} W \int_{\Sigma^{\infty}}\mathds 1_{\kappa\vp}(y) \ee^{-\mdim\aaa\left\lceil\frac{\psi(y)}{\aaa}+\frac{1}{\aaa}\ln{\frac{2\ee^{-T}}{\lvert\tilde{L}_{\om'}^i\rvert}}-\frac{1}{\aaa}\ln{A\bd_{\tilde{M}}}\right\rceil}\textup{d}\nu_{-\mdim\z}(y)
  \]
  we thus have
  \begin{eqnarray*}
    \ee^{-\mdim T}\leb^0(\partial F_{\ee^{-T}}\cap B(\kappa))/2
    \leq (1+t)(\bdneu_m\bd_m)^{\mdim}f_{\kappa}(T+\ln\bdneu_m\bd_m)+c_m\ee^{-\mdim T}.
  \end{eqnarray*}
  Likewise, 	
  \begin{eqnarray*}
    \ee^{-\mdim T}\leb^0(\partial F_{\ee^{-T}}\cap B(\kappa))/2
    \geq (1-t)(\bdneu_m\bd_m)^{-\mdim}f_{\kappa}(T-\ln\bdneu_m\bd_m).
  \end{eqnarray*}
  Clearly, $f_{\kappa}$ is periodic with period $\aaa$. Thus, Item (\ref{thislemcond2}) of \cref{periodic} is satisfied for $B=B(\kappa)$ and $f=f_{\kappa}$.
	
	In order to apply \cref{periodic} it remains to prove the validity of Item (\ref{thislemcond1}) of \cref{periodic}, that is that there exists a $\kappa\in\Delta$ for which $f_{\kappa}$ is not equal to an almost everywhere constant function.
  For that it suffices to consider the function $\tilde{f}_{\kappa}$ given by $\tilde{f}_{\kappa}(T)\defeq (A\bd_{\tilde{M}})^{-{\mdim}}f_{\kappa}(T-\ln A\bd_{\tilde{M}})$ instead. Set $\underline{\beta}\defeq \min\{\{\aaa^{-1}\ln\lvert\tilde{L}_{\om'}^i\rvert\}\mid {i=1,\ldots,Q-1}, \om'\in\Sigma^{\tilde{M}}\}$ and $\overline{\beta}\defeq\max\{\{\aaa^{-1}\ln\lvert\tilde{L}_{\om'}^i\rvert\}\mid{i=1,\ldots,Q-1}, \om'\in\Sigma^{\tilde{M}}\}$. We first assume that $\underline{\beta}>0$ and consider the following four cases.
  
  \underline{\textsc{Case 1}:} $\underline{D}\defeq\{y\in\Sigma^{\infty}\mid\{\aaa^{-1}\psi(y)\}<\underline{\beta}\}\not=\emptyset$.\\
  Since $\psi\in\mathcal{C}(\Sigma^{\infty})$ and thus $\underline{D}$ is open, there exists a $\kappa\in\Delta$ such that $\kappa\subseteq\underline{D}$. For $n\in\mathbb N$ and $r\in(0,1-\overline{\beta})$ define $T_n(r)\defeq \aaa(n+r)+\ln 2$. Then
  \[
  \tilde{f}_{\kappa}(T_n(r))=\ee^{-\mdim\aaa r}\cdot 2^{-\mdim}\sum_{i=1}^{Q-1}\sum_{\om'\in\Sigma^{\tilde{M}}} W \int_{\Sigma^{\infty}}\mathds 1_{\kappa\vp}(y) \ee^{-\mdim\aaa\left\lceil\frac{\psi(y)}{\aaa}\right\rceil+\mdim \aaa}\textup{d}\nu_{-\mdim\z}(y).
  \]
  This shows that $\tilde{f}_{\kappa}$ is strictly decreasing on $(\aaa n+\ln 2,\aaa(n+1-\overline{\beta})+\ln 2)$ for every $n\in\mathbb N$. Therefore, $f_{\kappa}$ is not equal to an almost everywhere constant function.
  
  \underline{\textsc{Case 2}:} $\overline{D}\defeq\{y\in\Sigma^{\infty}\mid\{\aaa^{-1}\psi(y)\}>\overline{\beta}\}\not=\emptyset$.\\
  Like in \textsc{Case 1}, there exists a $\kappa\in\Delta$ such that $\kappa\subseteq\overline{D}$. 
  For $n\in\mathbb N$ and $r\in(0,\underline{\beta})$ set $T_n(r)\defeq \aaa(n-r)+\ln 2$. Then
  \[
  \tilde{f}_{\kappa}(T_n(r))=\ee^{\mdim\aaa r}\cdot 2^{-\mdim}\sum_{i=1}^{Q-1}\sum_{\om'\in\Sigma^{\tilde{M}}} W \int_{\Sigma^{\infty}}\mathds 1_{\kappa\vp}(y) \ee^{-\mdim\aaa\left\lceil\frac{\psi(y)}{\aaa}\right\rceil}\textup{d}\nu_{-\mdim\z}(y).
  \]
  This shows that $\tilde{f}_{\kappa}$ is strictly decreasing on $(\aaa(n-\underline{\beta})+\ln 2,\aaa n+\ln 2)$ for every $n\in\mathbb N$. Therefore, $f_{\kappa}$ is not equal to an almost everywhere constant function.

  For the remaining cases we let $q^*\in\mathbb N\cup\{0\}$ be maximal such that $\underline{\beta}+q^*(1-\overline{\beta})\leq\overline{\beta}$.
  
  \underline{\textsc{Case 3}:}
  There exists a $q\in\{0,\ldots,q^*\}$ such that\\
  \hphantom{\underline{\textsc{Case 3}:}} 
  $D_q\defeq \{y\in\Sigma^{\infty}\mid\underline{\beta}+q(1-\overline{\beta})<\{\aaa^{-1}\psi(y)\}<\underline{\beta}+(q+1)(1-\overline{\beta})\}\neq\emptyset$.\\
  As in the above cases, there exists a $\kappa\in\Delta$ such that $\kappa\subseteq D_q$.
  For $n\in\mathbb N$ and $r\in(0,\underline{\beta})$ set $T_n^q(r)\defeq \aaa(n-\overline{\beta}+\underline{\beta}+q(1-\overline{\beta})-r)+\ln 2$. Then
  \[
  \tilde{f}_{\kappa}(T_n^q(r))=\ee^{\mdim\aaa r}\cdot 2^{-\mdim}\ee^{\mdim\aaa(\overline{\beta}-\underline{\beta}-q(1-\overline{\beta}))}\sum_{i=1}^{Q-1}\sum_{\om'\in\Sigma^{\tilde{M}}} W \int_{\Sigma^{\infty}}\mathds 1_{\kappa\vp}(y) \ee^{-\mdim\aaa\left\lceil\frac{\psi(y)}{\aaa}\right\rceil}\textup{d}\nu_{-\mdim\z}(y).
  \]
  This shows that $\tilde{f}_{\kappa}$ is strictly decreasing on $(\aaa(n-\overline{\beta}+q(1-\overline{\beta}))+\ln 2, \aaa(n-\overline{\beta}+\underline{\beta}+q(1-\overline{\beta}))+\ln 2)$.
  Therefore, $f_{\kappa}$ is not equal to an almost everywhere constant function.
  
  If neither of the cases 1-3 obtains, then the following case obtains.
  
  \underline{\textsc{Case 4}:}
  $\{y\in\Sigma^{\infty}\mid\{\aaa^{-1}\psi(y)\}\subseteq\{\underline{\beta}+q(1-\overline{\beta})\mid q\in\{0,\ldots,q^*\}\}\}=\Sigma^{\infty}$.\\
  Define $q_i\defeq\min(\{\underline{\beta}+q(1-\overline{\beta})-\{\aaa^{-1}\ln\lvert\tilde{L}_{\om'}^i\rvert\}>0\mid q\in\{0,\ldots,q^*\},\om'\in\Sigma^{\tilde{M}}\}\cup\{1\})$ and $p\defeq\min\{q_1,\ldots,q_N,1-\overline{\beta}+\underline{\beta}\}$. 
  For $n\in\mathbb N$ and $r\in(0,p/2)$ define $T_n(r)\defeq\aaa(n+r)+\ln 2$. Then
  \[
  \tilde{f}_{\Sigma^{\infty}}(T_n(r))=\ee^{-\mdim\aaa r}\cdot 2^{-\mdim}\sum_{i=1}^{Q-1}\sum_{\om'\in\Sigma^{\tilde{M}}} W \int_{\Sigma^{\infty}} \ee^{-\mdim\aaa\left\lceil\frac{\psi(y)}{\aaa}-\frac{1}{\aaa}\ln\lvert\tilde{L}_{\om'}^i\rvert\right\rceil}\textup{d}\nu_{-\mdim\z}(y).
  \] 
  This shows that $\tilde{f}_{\Sigma^{\infty}}$ is strictly decreasing on $(\aaa n+\ln 2,\aaa(n+p/2)+\ln 2)$.
  Therefore, $f_{\Sigma^{\infty}}$ is not equal to an almost everywhere constant function.
  
  If $\underline{\beta}=0$, then the same methods can be applied after shifting the origin by $(1-\overline{\beta})/2$ to the left.
  
  Thus, we can apply \cref{periodic} in all four cases and obtain that there always exists a Borel set $B(\kappa)$ such that $\underline{C}_k^f(F,B(\kappa))<\overline{C}_k^f(F,B(\kappa))$ for $k\in\{0,1\}$. 
	
  In order to deduce that the fractal curvature measures do not exist, construct a function $\eta\colon\mathbb R\to\mathbb [0,1]$ which is continuous, equal to 1 on $B(\kappa)$ and equal to 0 on $\mathbb R\setminus B(\kappa)_{\theta}$. Then $\liminf_{\e\to 0}\int \eta\e^{\mdim}\textup{d}\leb^0(\partial F_{\e}\cap\cdot)/2=\underline{C}_0^f(F,B(\kappa))<\overline{C}_0^f(F,B(\kappa))=\limsup_{\e\to 0}\int \eta\e^{\mdim}\textup{d}\leb^0(\partial F_{\e}\cap\cdot)/2$. Thus, the 0-th fractal curvature measure does not exist. Using the same function $\eta$ it follows analogously, that the 1-st fractal curvature measure does not exist, which completes the proof.
\end{proof}

\begin{proof}[Proof of Part {\rm(\ref{curvatureresult:lattice})} of \cref{curvatureresult} (boundedness and positivity)]
Since $\xi$ is a lattice function, there exist $\z,\psi\in\mathcal C(\Sigma^{\infty})$ such that
\[
	\xi-\z=\psi-\psi\circ\sigma
\]
and such that $\z$ is a function whose range is contained in a discrete subgroup of $\mathbb R$. Let $\aaa>0$ be the maximal real number such that $\z(\Sigma^{\infty})\subseteq\aaa\mathbb Z$. Recall from the beginning of Section \ref{sec:proofs} that the hypotheses and \cref{rmk:alpha} imply that $\xi$ is Hölder continuous and strictly positive and that the unique $s>0$ for which $\eigenv_{-s\xi}=1$ is the Minkowski dimension $\mdim$ of $F$.

Fix the notation of the beginning of Section \ref{sec:proofs}. Since $\mathds 1_{Z\vp}$ is Hölder continuous and since we can assume that $\mathds 1_{Z\vp}$ is not identically zero, by combining equations (\ref{eq:Aomover}), (\ref{eq:Aomunder}) and (\ref{eq:Snpm}), we see that an application of \cref{thmlalleyneu} to $\overline{A}_{\om}^i(x,\e,Z)$ and $\underline{A}_{\om}^i(x,\e,Z)$ gives the following asymptotics.
\begin{eqnarray}
  \hspace{-1cm}\overline{A}_{\om}^i(x,\e,Z)
  \hspace{-0.2cm}&\sim&\hspace{-0.2cm} W_{\om}(x)\int_{\Sigma^{\infty}}\mathds 1_{Z\vp}(y) \ee^{-\mdim\aaa\left\lceil\frac{\psi(y)-\psi(\om x)}{\aaa}+\frac{1}{\aaa}\ln\frac{2\e}{\Lomi\bd_{n(\om)}}\right\rceil}\textup{d}\nu_{-\mdim\z}(y)\ \text{and}\label{eq:Aomoverasym:lattice}\\
  \hspace{-1cm}\underline{A}_{\om}^i(x,\e,Z)
  \hspace{-0.2cm}&\sim&\hspace{-0.2cm} W_{\om}(x)\int_{\Sigma^{\infty}}\mathds 1_{Z\vp}(y) \ee^{-\mdim\aaa\left\lceil\frac{\psi(y)-\psi(\om x)}{\aaa}+\frac{1}{\aaa}\ln\frac{2\e\bd_{n(\om)}}{\Lomi}\right\rceil}\textup{d}\nu_{-\mdim\z}(y)\label{eq:Aomunderasym:lattice}
\end{eqnarray}
as $\e\to 0$ uniformly for $x\in\Sigma^{\infty}$, where
\begin{eqnarray}\label{eq:W}
  W_{\om}(x)\defeq \frac{\aaa\eigenf_{-\mdim\z}(\om x)}{(1-\ee^{-\mdim\aaa})\int\z\textup{d}\mu_{-\mdim\z}}.
\end{eqnarray}

  For the boundedness we first remark that $\overline{C}_0^f(F,\cdot)$ is monotonically increasing as a set function in the second component. Therefore, in order to find an upper bound for $\overline{C}_0^f(F,\cdot)$ it suffices to consider $\overline{C}_0^f(F,\mathbb R)$. For all $m\in\mathbb N$ we have 
  \begin{eqnarray*}
    \hspace{-1cm}&&\overline{C}_0^f(F,\mathbb R)
    \stackrel{(\ref{eq:fcmoben})}{\leq}\limsup_{\e\to 0}\e^{\mdim}\sum_{i=1}^{Q-1}\sum_{\om\in\Sigma^m}\overline{A}_{\om}^i(x,\e,\Sigma^{\infty})\\
    \hspace{-1cm}&&\qquad\stackrel{(\ref{eq:Aomoverasym:lattice})}{=} 	\limsup_{\e\to 0}\e^{\mdim}\sum_{i=1}^{Q-1}\sum_{\om\in\Sigma^m}
    W_{\om}(x)\int_{\Sigma^{\infty}} \ee^{-\mdim\aaa\left\lceil\frac{\psi(y)-\psi(\om x)}{\aaa}+\frac{1}{\aaa}\ln\frac{2\e}{\Lomi\bd_{m}}\right\rceil}\textup{d}\nu_{-\mdim\z}(y)\\
    \hspace{-1cm}&&\qquad\ \ \leq\ \ \limsup_{\e\to 0}\e^{\mdim}\sum_{i=1}^{Q-1}\sum_{\om\in\Sigma^m}
    W_{\om}(x) \int_{\Sigma^{\infty}} \ee^{-\mdim\aaa\left(\frac{\psi(y)-\psi(\om x)}{\aaa}+\frac{1}{\aaa}\ln\frac{2\e}{\Lomi\bd_{m}}\right)}\textup{d}\nu_{-\mdim\z}(y)\\
    \hspace{-1cm}&&\qquad\ \ =\ \ \sum_{i=1}^{Q-1}\sum_{\om\in\Sigma^m} \frac{\aaa \ee^{\mdim\psi(\om x)}\eigenf_{-\mdim\z}(\om x)}{(1-\ee^{-\mdim\aaa})\int\z\textup{d}\mu_{-\mdim\z}} \left(\frac{\Lomi\bd_m}{2}\right)^{\mdim}\int_{\Sigma^{\infty}} \ee^{-\mdim\psi(y)}\textup{d}\nu_{-\mdim\z}(y).
  \end{eqnarray*}
  Note that $\eigenf_{-\mdim\xi}=\ee^{\mdim\psi}\eigenf_{-\mdim\z}$ and $\textup{d}\nu_{-\mdim\xi}=\ee^{-\mdim\psi}\textup{d}\nu_{-\mdim\z}$. Hence, by \cref{Upsilon}
  \begin{eqnarray*}
    \overline{C}_0^f(F,\mathbb R)
    \leq \liminf_{m\to\infty}\sum_{i=1}^{Q-1}\sum_{\om\in\Sigma^m}\Lomi^{\mdim}\frac{\aaa 2^{-\mdim}}{(1-\ee^{-\mdim\aaa})\int\z\textup{d}\mu_{-\mdim\z}}\eqdef c_0.
  \end{eqnarray*}
  $c_0\in(0,\infty)$ because $\sum_{i=1}^{\Q-1}\sum_{\om\in\Sigma^m}\Lomi^{\mdim}\leq\sum_{i=1}^{\Q-1}\sum_{\om\in\Sigma^m}\|\phi'_{\om}\|^{\mdim}\eqdef a_m$, where $\|\cdot\|$ denotes the supremum-norm on $\mathcal C(X)$ and the sequence $(a_m)_{m\in\mathbb N}$ is bounded by Lemma 4.2.12 of \cite{Urbanski_Buch}.
  
  That $\underline{C}_0^f(F,\mathbb R)$ is positive can be seen by the following.
  \begin{eqnarray*}
    \hspace{-0.8cm}&&\underline{C}_0^f(F,\mathbb R)
    \geq\liminf_{\e\to 0}\e^{\mdim}\sum_{i=1}^{Q-1}\sum_{\om\in\Sigma^m}\underline{A}_{\om}^i(x,\e,\Sigma^{\infty})\\
    \hspace{-0.7cm}&&\qquad\stackrel{(\ref{eq:Aomunderasym:lattice})}{\geq}\liminf_{\e\to 0}\e^{\mdim}\sum_{i=1}^{Q-1}\sum_{\om\in\Sigma^m} W_{\om}(x) \int_{\Sigma^{\infty}} \ee^{-\mdim\aaa\left(\frac{\psi(y)-\psi(\om x)}{\aaa}+\frac{1}{\aaa}\ln\frac{2\e\bd_{m}}{\Lomi}+1\right)}\textup{d}\nu_{-\mdim\z}(y)\\
    \hspace{-0.9cm}&&\qquad\ \ =\ \  \sum_{i=1}^{Q-1}\sum_{\om\in\Sigma^m} \frac{\aaa\eigenf_{-\mdim\z}(\om x)}{(1-\ee^{-\mdim\aaa})\int\z\textup{d}\mu_{-\mdim\z}} \ee^{\mdim\psi(\om x)-\mdim\aaa}\left(\frac{\Lomi}{2\bd_m}\right)^{\mdim}\int_{\Sigma^{\infty}} \ee^{-\mdim\psi(y)}\textup{d}\nu_{-\mdim\z}(y).
  \end{eqnarray*}
  By using $\eigenf_{-\mdim\xi}=\ee^{\mdim\psi}\eigenf_{-\mdim\z}$ and $\textup{d}\nu_{-\mdim\xi}=\ee^{-\mdim\psi}\textup{d}\nu_{-\mdim\z}$ and \cref{Upsilon}, we hence obtain
  \[
  \underline{C}_0^f(F,\mathbb R)
  \geq \limsup_{m\to\infty}\sum_{i=1}^{Q-1}\sum_{\om\in\Sigma^m}\Lomi^{\mdim}\frac{\aaa 2^{-\mdim}\ee^{-\mdim\aaa}}{(1-\ee^{-\mdim\aaa})\int\z\textup{d}\mu_{-\mdim\z}}>0.
  \]
  
  The results on $\underline{C}_1^f(F,B)$ and $\overline{C}_1^f(F,B)$ are now a straightforward application of \cref{lemWinterRataj}.
\end{proof}

\begin{proof}[Proof of \cref{conformalMinkowski}]
  Parts (\ref{conformalMinkowski:average}) and (\ref{conformalMinkowski:nonlattice}) are immediate consequences of \cref{curvatureresult}. For Part (\ref{conformalMinkowski:lattice}) use that  the hypotheses of Part (\ref{conformalMinkowski:lattice}) of \cref{conformalMinkowski} and \cref{lem:existanceB} together imply that  
  \begin{eqnarray*}
    \overline{A}
    &\defeq&\lim_{\e\to 0}\e^{\mdim}\int_{\Sigma^{\infty}}\ee^{-\mdim\aaa\left\lceil\frac{\psi(y)-\psi(\om x)}{\aaa}+\frac{1}{\aaa}\ln\frac{2\e}{\Lomi\bd_m}\right\rceil}\textup{d}\nu_{-\mdim\z}(y)\cdot \left(\frac{2}{\Lomi\bd_m}\right)^{\mdim}\quad\text{and}\\
    \underline{A}
    &\defeq&\lim_{\e\to 0}\e^{\mdim}\int_{\Sigma^{\infty}}\ee^{-\mdim\aaa\left\lceil\frac{\psi(y)-\psi(\om x)}{\aaa}+\frac{1}{\aaa}\ln\frac{2\e\bd_m}{\Lomi}\right\rceil}\textup{d}\nu_{-\mdim\z}(y)\cdot \left(\frac{2\bd_m}{\Lomi}\right)^{\mdim}
  \end{eqnarray*}
  exist for every $\om\in\Sigma^m$ and $i\in\{1,\ldots,Q-1\}$, are independent of $\om$ and $i$ and are equal, that is $\overline{A}=\underline{A}\eqdef A$.
	Combining Equations (\ref{eq:fcmoben}) and (\ref{eq:Aomoverasym:lattice}) and Equations (\ref{eq:fcmunten}) and (\ref{eq:Aomunderasym:lattice}), where $Z=\Sigma^{\infty}$, we conclude
  \begin{eqnarray*}
    \overline{C}_0^f(F,\mathbb R)
    &\leq&\sum_{i=1}^{Q-1}\sum_{\om\in\Sigma^m} W_{\om}(x)\left(\frac{\Lomi\bd_m}{2}\right)^{\mdim}\cdot A\quad\text{and}\\
    \underline{C}_0^f(F,\mathbb R)
    &\geq&\sum_{i=1}^{Q-1}\sum_{\om\in\Sigma^m} W_{\om}(x)\left(\frac{\Lomi}{2\bd_m}\right)^{\mdim}\cdot A, 		
  \end{eqnarray*}
  where $W_{\om}(x)$ is as defined in (\ref{eq:W}).
  Applying \cref{Upsilon} we obtain $\overline{C}_0^f(F,\mathbb R)=\underline{C}_0^f(F,\mathbb R)$. An application of \cref{lemWinterRataj} then completes the proof.
\end{proof}

\subsection{Average Fractal Curvature Measures}
\begin{proof}[Proof of Part {\rm(\ref{curvatureresult:average})} of \cref{curvatureresult}]
  If $\xi$ is nonlattice, Part (\ref{curvatureresult:average}) of \cref{curvatureresult} immediately follows from Part (\ref{curvatureresult:nonlattice}) of \cref{curvatureresult} and the fact that $f(\e)\sim c$ as $\e\to 0$ for some constant $c\in\mathbb R$ implies $\lim_{T\searrow 0}\lvert\ln T\rvert^{-1}\int_T^1 \e^{-1}f(\e)\textup{d}\e=c$ for every locally integrable function $f\colon(0,\infty)\to\mathbb R$. 
  
  Thus for the rest of the proof we assume that $\xi$ is lattice and fix the notation from the beginning of Section \ref{sec:proofs}. In particular, recall that $b\in\mathbb R\setminus F$.
	
  We begin with showing the result on the 0-th average fractal curvature measure.
  
  Observe that $\lim_{T\searrow 0}\lvert\ln T\rvert^{-1}\int_T^1c\e^{\mdim-1}\textup{d}\e=\lim_{T\to \infty}\lvert T\rvert^{-1}\int_0^T c\ee^{-\mdim t}\textup{d}t=0$ for every constant $c\in\mathbb R$.
  For a fixed $m\in\mathbb N$ define $M\defeq \min\{\Lomi\mid i\in\{1,\ldots,Q-1\}, \om\in\Sigma^m\}/2$. From Equations (\ref{eq:Xisum}) and (\ref{eq:Aomover}) we deduce the following. 
  \begin{eqnarray*}
    &&\hspace{-0.8cm}\overline{D}
    \defeq\limsup_{T\searrow 0}\lvert2\ln T\rvert^{-1}\int_T^1 \e^{\mdim-1}\leb^0(\partial F_{\e}\cap(-\infty,\bb])\textup{d}\e\\
    &&\hspace{-0.8cm}\leq \limsup_{T\searrow 0}\lvert\ln T\rvert^{-1}\hspace{-0.05cm}\bigg(\hspace{-0.1cm}\int_T^M\hspace{-0.25cm} \e^{\mdim-1}\sum_{i=1}^{Q-1}\hspace{-0.1cm}\sum_{\om\in\Sigma^m} \overline{A}_{\om}^i(x,\e,Z)\textup{d}\e\hspace{-0.05cm} + \hspace{-0.05cm}\frac{1}{2}\int_M^1\hspace{-0.2cm} \e^{\mdim-1}\leb^0(\partial F_{\e}\cap(-\infty,\bb])\textup{d}\e\hspace{-0.1cm}\bigg).
  \end{eqnarray*}
  Local integrability of the integrands implies that we have the following equation for all $m\in\mathbb N$.
  \begin{eqnarray}
    \hspace{-0.6cm}\overline{D}
    \hspace{-0.3cm}&\leq&\hspace{-0.3cm} \limsup_{T\searrow 0}\lvert\ln T\rvert^{-1}\int_T^1 \e^{\mdim-1}\sum_{i=1}^{Q-1}\sum_{\om\in\Sigma^m} \overline{A}_{\om}^i(x,\e,Z)\textup{d}\e\nonumber\\
    \hspace{-0.3cm}&=&\hspace{-0.3cm} \limsup_{T\to \infty} T^{-1}\sum_{i=1}^{Q-1}\sum_{\om\in\Sigma^m}\int_0^T \ee^{-\mdim t} \overline{A}_{\om}^i(x,\ee^{-t},Z)\textup{d}t\nonumber\\
    \hspace{-0.3cm}&\stackrel{(\ref{eq:Snpm})}{=}&\hspace{-0.3cm} \limsup_{T\to\infty} T^{-1}\sum_{i=1}^{Q-1}\sum_{\om\in\Sigma^m} \int_0^T \ee^{-\mdim t}\sum_{n=0}^{\infty}	\sum_{y\colon\sigma^n y=\om x}\mathds 1_{Z\vp}(y)\cdot\mathds 1_{\{S_n\xi(y)\leq t-\ln\frac{2}{\Lomi\bd_{m}}\}\vp}\textup{d}t\nonumber\\
    \hspace{-0.3cm}&\leq&\hspace{-0.3cm} \limsup_{T\to\infty} \sum_{i=1}^{Q-1}\sum_{\om\in\Sigma^m}\left(\frac{\Lomi\bd_m}{2}\right)^{\mdim}\frac{T-\ln\frac{2}{\Lomi\bd_{m}}}{T}\cdot\nonumber\\
    \hspace{-0.3cm}&&\hspace{-0.3cm}\quad \left(T-\ln\frac{2}{\Lomi\bd_{m}}\right)^{-1}\hspace{-0.2cm}\int_0^{T-\ln\frac{2}{\Lomi\bd_{m}}} \ee^{-\mdim t}\sum_{n=0}^{\infty} \sum_{y\colon\sigma^n y=\om x}\mathds 1_{Z\vp}(y)\cdot\mathds 1_{\{S_n\xi(y)\leq t\}\vp}\textup{d}t\nonumber\\
    \hspace{-0.3cm}&=&\hspace{-0.3cm} \sum_{i=1}^{Q-1}\sum_{\om\in\Sigma^m}\left(\frac{\Lomi\bd_m}{2}\right)^{\mdim} \frac{\eigenf_{-\mdim\xi}(\om x)\int\mathds 1_{Z\vp}\textup{d}\nu_{-\mdim\xi}}{\mdim\int\xi\textup{d}\mu_{-\mdim\xi}}.\label{eq:avcor}	
  \end{eqnarray}
  The last equality is an application of \cref{thmlalleyaverage}. Because (\ref{eq:avcor}) holds for all $m\in\mathbb N$, applying \cref{Upsilon} yields
  \begin{eqnarray}
    &&\limsup_{T\searrow 0}\lvert2\ln T\rvert^{-1}\int_T^1 \e^{\mdim-1}\leb^0(\partial F_{\e}\cap(-\infty,\bb])\textup{d}\e\nonumber\\
      &&\qquad\qquad
      \leq \frac{2^{-\mdim}\int\mathds 1_{Z\vp}\textup{d}\nu_{-\mdim\xi}}{\mdim\int\xi\textup{d}\mu_{-\mdim\xi}} \liminf_{m\to\infty}\sum_{i=1}^{Q-1}\sum_{\om\in\Sigma^m} \Lomi^{\mdim}.\label{eq:averageoben}
  \end{eqnarray}
  Analogous estimates give 
  \begin{eqnarray}
    &&\liminf_{T\searrow 0}\lvert2\ln T\rvert^{-1}\int_T^1 \e^{\mdim-1}\leb^0(\partial F_{\e}\cap(-\infty,\bb])\textup{d}\e\nonumber\\
      &&\qquad\qquad
      \geq \frac{2^{-\mdim}\int\mathds 1_{Z\vp}\textup{d}\nu_{-\mdim\xi}}{\mdim\int\xi\textup{d}\mu_{-\mdim\xi}} \limsup_{m\to\infty}\sum_{i=1}^{Q-1}\sum_{\om\in\Sigma^m} \Lomi^{\mdim}.\label{eq:averageunten}
  \end{eqnarray}
  Equations (\ref{eq:averageoben}) and (\ref{eq:averageunten}) together imply that for every $b\in\mathbb R\setminus F$
  \begin{eqnarray*}
    \lim_{T\searrow 0}\lvert2\ln T\rvert^{-1}\int_T^1 \e^{\mdim-1}\leb^0(\partial F_{\e}\cap(-\infty,\bb])\textup{d}\e
      = \frac{2^{-\mdim}c}{\entro_{\mu_{-\mdim\xi}}}\nu(F\cap(-\infty,\bb]),
  \end{eqnarray*}
  where the constant $c\defeq\lim_{m\to\infty}\sum_{i=1}^{Q-1}\sum_{\om\in\Sigma^m}\Lomi^{\mdim}$ is well-defined.
  Since $\mathbb R\setminus F$ is dense in $\mathbb R$, the statement on the 0-th average fractal curvature measure in Part (\ref{curvatureresult:average}) of \cref{curvatureresult} follows.
  
  For the statement on the 1-st average fractal curvature measure, we use Part (\ref{curvatureresult:lattice}) of \cref{curvatureresult} which says that $\overline{C}_0^f(F,(-\infty,\bb])<\infty$ for every $\bbb\in\mathbb R\setminus F$. Applying \cref{lemWinterRataj} hence yields that $\overline{\mathcal M}(F\cap(-\infty,\bb])<\infty$ for every $\bbb\in\mathbb R\setminus F$. By the same arguments that were used in the end of the proof of Part (\ref{curvatureresult:nonlattice}), we can thus apply \Cref{lemWinterRataj:average} to $F\cap(-\infty,\bbb]$ and obtain the desired statement.
\end{proof}

\section{Proofs concerning the Special Cases}\label{sec:specialcases}

\subsection{Self-Similar Sets; Proof of \cref{similars}}
Self-similar sets satisfying the open set condition form a special class of self-conformal sets, namely those which are generated by an iterated function system $\Phi$ consisting of similarities $\phi_1,\ldots,\phi_N$.
We let $r_1,\ldots,r_N$ denote the respective similarity ratios of $\phi_1,\ldots,\phi_N$ and set $r_{\om}\defeq r_{\om_1}\cdots r_{\om_n}$ for a finite word $\om=\om_1\cdots\om_n\in\Sigma^n$.
When considering self-similar sets some of the formulae simplify significantly:
\renewcommand{\theenumi}{\Alph{enumi}}
\begin{enumerate}
\item\label{similarpotential} The geometric potential function is constant on the one-cylinders meaning $\xi(\om)=-\ln r_{\om_1}$ for $\om=\om_1\om_2\cdots\in\Sigma^{\infty}$.
\item The unique $\sigma$-invariant Gibbs measure $\mu_{-\mdim\xi}$ for the potential function $-\mdim\xi$ coincides with the $\mdim$-dimensional normalised Hausdorff measure on $F$. Thus, $\mu_{-\mdim\xi}([i])=r_i^{\mdim}$, where $[i]$ shall denote the cylinder of $i\in\Sigma$. Therefore we have that $\entro_{\mu_{-\mdim\xi}}=-\mdim\sum_{i\in\Sigma}\ln(r_i)r_i^{\mdim}$. 
\item The lengths of the \main\ gaps of $\phi_{\om}F$ are just multiples of the lengths of the primary gaps of $F$, that is $\Lomi=r_{\om}\lvert L^{i}\rvert$ for each $i\in\{1,\ldots,\Q-1\}$ and $\om\in\Sigma^*$.
\item\label{MoranHutchinson} By the Moran-Hutchinson formula (see for instance Theorem 9.3 of \cite{Falconer_Foundation}) we have that $\sum_{\om\in\Sigma^n}r_{\om}^{\mdim}=1$ for each $n\in\mathbb N$.
\end{enumerate}

\begin{proof}[Proof of \cref{similars}]
	Combining (\ref{similarpotential})-(\ref{MoranHutchinson}) with \cref{curvatureresult}, we obtain Part (\ref{ss:average}) of \cref{similars}.
In order to prove Part (\ref{ss:lattice}) of \cref{similars}, which actually is a stronger result than that of Part (\ref{curvatureresult:lattice}) of \cref{curvatureresult}, we are going to make use of the asymptotics (\ref{eq:Aomoverasym:lattice}) and (\ref{eq:Aomunderasym:lattice}) that we obtained for self-conformal sets.

  As $F\cap B$ has got a representation as a finite nonempty union of sets of the form $\phi_{\om}F$ with $\om\in\Sigma^*\setminus\{\emptyset\}$, there is a set $Z\subseteq \Sigma^{\infty}$ which is a finite union of cylinder sets and which satisfies $\bij Z=F\cap B$. For this $Z$, $\mathds 1_{Z\vp}$ is Hölder continuous.
  Furthermore, the range of the geometric potential function of a lattice self-similar set itself is contained in a discrete subgroup of $\mathbb R$. Thus, $\psi$ is a constant function and $\z=\xi$. Moreover, $\bd_m=1$ for al $m\in\mathbb N$ and one easily verifies that $\eigenf_{-\mdim\xi}\equiv 1$ and $\Lomi=r_{\om}\lvert L^i\rvert$. For these reasons the methods in the beginning of Section \ref{sec:proofs} simplify in the following way. 
  
  Let $T\geq 0$ be sufficiently large such that $F_{\ee^{-T}}\cap B=(F\cap B)_{\ee^{-T}}$ and let $x\in\Sigma^{\infty}$ be arbitrary. Then there exists a constant $c\geq 0$, which depends on the number of sets $\phi_{\om}F$ whose union is $F\cap B$, such that
  \begin{eqnarray}
    &&\leb^0\big(\partial F_{\ee^{-T}}\cap B \big)/2
    \stackrel{(\ref{eq:lebesgue})}{=}\sum_{i=1}^{Q-1}\card\{\om\in\Sigma^*\mid L_{\om}^i\subseteq B,\ \Lomi\geq 2\ee^{-T}\}+c\nonumber\\
    &&\qquad =\sum_{i=1}^{Q-1}\sum_{n=0}^{\infty}\sum_{\om\in\Sigma^n}\mathds 1_{Z\vp}(\om x)\mathds 1_{\{\lvert\phi'_{\om}(x)\rvert\cdot\lvert L^i\rvert\geq 2\ee^{-T}\}\vp}+c\nonumber\\
    &&\qquad =\sum_{i=1}^{Q-1}\sum_{n=0}^{\infty}\sum_{y\colon\sigma^n y=x} \mathds 1_{Z\vp} (y)\mathds 1_{\{S_n\xi(y)\leq-\ln\frac{2\ee^{-T}}{\lvert L^i\rvert}\}\vp}+c\nonumber\\
    &&\qquad\sim \sum_{i=1}^{Q-1}\frac{\aaa\int\mathds 1_{Z\vp}\textup{d}\nu_{-\mdim\xi}}{(1-\ee^{-\mdim\aaa})\int\xi\textup{d}\mu_{-\mdim\xi}}\cdot \ee^{-\mdim\aaa\left\lceil\ln\frac{2\ee^{-T}}{\lvert L^i\rvert}\right\rceil}+c\qquad\text{(as $T\to\infty$)},\label{ssasymptoticproof}
  \end{eqnarray}
  where the last asymptotic is obtained by applying \cref{thmlalleyneu}.
  We introduce the function $f\colon\mathbb R^+\to\mathbb R^+$ given by
  \[
  f(T)\defeq \ee^{-\mdim T}\frac{\aaa \nu(B)}{(1-\ee^{-\mdim\aaa})\entro_{\mu_{-\mdim\xi}}}\sum_{i=1}^{Q-1} \ee^{-\mdim\aaa\left\lceil\frac{1}{\aaa}\ln\frac{2\ee^{-T}}{\lvert L^i\rvert}\right\rceil}.
  \]
  By the asymptotics given in (\ref{ssasymptoticproof}), we know that for all $t>0$ there exists an $M\in\mathbb N$ such that for all $T\geq M$ we have
  \begin{eqnarray*}
    (1-t)f(T)
    \leq \ee^{-\mdim T}\leb^0(\partial F_{\ee^{-T}}\cap B)/2
    \leq(1+t)f(T)+c\ee^{-\mdim T}.
  \end{eqnarray*}
  Clearly, $f$ is a periodic function with period $\aaa$. Moreover, $f$ is piecewise continuous with a finite number of discontinuities in an interval of length $\aaa$. Additionally, on every interval where $f$ is continuous, $f$ is strictly decreasing. Therefore $f$ is not equal to an almost everywhere constant function. Thus, all conditions of \cref{periodic} are satisfied which finishes the proof.
\end{proof}

\subsection{\boldmath{$\mathcal C^{1+\alpha}$} Images of Self-Similar Sets; Proofs of \cref{corimage,corminim}}
In this subsection we consider the case that $F$ is an image of a self-similar set $K\subseteq X$ under a conformal map $g\in\mathcal C^{1+\alpha}(\mathcal U)$, where $\alpha> 0$ and $\mathcal U$ is a convex neighbourhood of the compact connected set $X$. We assume that $\lvert g'\rvert$ is bounded away from 0 on its domain of definition. Thus, $g$ is bi-Lipschitz and therefore the Minkowski dimension of $F$ coincides with the Minkowski dimension of $K$ (see for instance Corollary 2.4 of \cite{Falconer_Foundation}). We denote the common value by $\mdim$.

The similarities $R_1,\ldots, R_N$ generating $K$ and the mappings $\phi_1,\ldots,\phi_N$ generating $F$ are connected through the equations $\phi_i=g\circ R_i\circ g^{-1}$ for each $i\in\Sigma$. We denote by $\bij_K$ and $\bij_F$ respectively the natural code maps from $\Sigma^{\infty}$ to $K$ and $F$. 
If we further let $\mathcal H_K^{\mdim}$ denote the normalised $\mdim$-dimensional Hausdorff measure on $K$, that is $\mathcal H_K^{\mdim}(\cdot)\defeq\mathcal H^{\mdim}(\cdot\cap K)/\mathcal H^{\mdim}(K)$, and let $r_1,\ldots,r_N$ denote the respective similarity ratios of $R_1,\ldots, R_N$, we have the following list of observations.

\renewcommand{\theenumi}{\Alph{enumi}'}
\begin{enumerate}
\item\label{en:diffbar} $\phi_i$ is differentiable for every $i\in\Sigma$ with differential
  \[
  \phi'_i(y)=\frac{g'(R_i\circ g^{-1}(y))}{g'(g^{-1}(y))}\cdot r_i,
  \] 
  where $y\in Y$ and $Y$ is the nonempty compact interval which each $\phi_i$ is defined on.
\item\label{en:geopot} The geometric potential function $\xi_F$ associated with $F$ is given by $\xi_F(\om)=-\ln\lvert g'(g^{-1}(\bij_F\om))\rvert +\ln\lvert g'(g^{-1}(\bij_F\sigma\om))\rvert-\ln r_{\om_1}$, where $\om=\om_1\om_2\cdots\in\Sigma^{\infty}$.
The geometric potential function $\xi_K$ associated with $K$ is given by $\xi_K(\om)=-\ln r_{\om_1}$.  Thus $\xi_K$ is nonlattice, if and only if $\xi_F$ is nonlattice.
\item\label{en:gibbs} The unique $\sigma$-invariant Gibbs measure for the potential function $-\mdim\xi_F$ is $\mu_{-\mdim\xi_F}=\mathcal H_K^{\mdim}\circ g^{-1}\circ\bij_F$, the one associated with $-\mdim\xi_K$ is $\mu_{-\mdim\xi_K}=\mathcal H_K^{\mdim}\circ\bij_F$. 
\item From (\ref{en:geopot}) and (\ref{en:gibbs}) we obtain 
  \[
  \hspace{1.5cm}\entro_{-\mdim\xi_F}
  = \int_{\Sigma^{\infty}}\xi_F\textup{d}\mu_{-\mdim\xi_F}
  = -\sum_{i\in\Sigma}\ln r_i \cdot r_i^{\mdim}
  = \int_{\Sigma^{\infty}}\xi_K\textup{d}\mu_{-\mdim\xi_K}
  =\entro_{-\mdim\xi_K}.
  \]
\end{enumerate} 

Further, let $\tilde{L}^1,\ldots,\tilde{L}^{\Q-1}$ denote the primary gaps of $K$ and $\tilde{L}_{\om}^1,\ldots,\tilde{L}_{\om}^{\Q-1}$ the \main\ gaps of $R_{\om}K$ for each $\om\in\Sigma^*$ and recall that $L^1,\ldots,L^{\Q-1}$ and $L_{\om}^1,\ldots,L_{\om}^{\Q-1}$ respectively denote the primary gaps of $F$ and the \main\ gaps of $\phi_{\om}F$. Then
\begin{enumerate}\setcounter{enumi}{4}
\item $L_{\om}^i=g(\tilde{L}_{\om}^{i})$ for $i\in\{1,\ldots,\Q-1\}$ and $\om\in\Sigma^*$. Since furthermore $\lvert\tilde{L}_{\om}^{i}\rvert=r_{\om}\lvert\tilde{L}^{i}\rvert$, we have 
  \begin{eqnarray*}
  \hspace{1cm}\lim_{n\to\infty}\sum_{i=1}^{\Q-1}\sum_{\om\in\Sigma^n}\Lomi^{\mdim}
  &=&\lim_{n\to\infty}\sum_{i=1}^{Q-1}\sum_{\om\in\Sigma^n}\left(r_{\om}\lvert\tilde{L}^i\rvert\cdot\lvert g'(x_{\om})\rvert\right)^{\mdim}\\
  &=&\sum_{i=1}^{\Q-1}\lvert \tilde{L}^{i}\rvert^{\mdim}\int_K \lvert g'\rvert^{\mdim}\textup{d}\mathcal H_K^{\mdim},
  \end{eqnarray*}
  where $x_{\om}\in[\om]$ for each $\om\in\Sigma^*$. Note that the above line can be rigorously proven by using the Bounded Distortion Lemma (\cref{bd}).
\item\label{en:nu} The $\mdim$-conformal measure $\nu_F$ associated with $F$ and the $\mdim$-conformal measure $\nu_K$ associated with $K$ are absolutely continuous with Radon-Nikodym derivative
  \[
  \frac{\textup{d}\nu_F}{\textup{d}\nu_K\circ g^{-1}}=\lvert g'\circ g^{-1}\rvert^{\mdim}\bigg(\int_K \lvert g'\rvert^{\mdim}\textup{d}\mathcal H_K^{\mdim}\bigg)^{-1}.
  \]
\item\label{en:conformal} From the fact that $R_1,\ldots, R_N$ are contractions and $g'$ is Hölder continuous and bounded away from 0, one can deduce that there exists an iterate of $\Phi\defeq\{\phi_1,\ldots,\phi_N\}$ which solemnly consists of contractions. As this iterate also generates $F$, it follows that $F$ is a self-conformal set.
\end{enumerate}

\begin{proof}[Proof of \cref{corimage}]
Using (\ref{en:diffbar})-(\ref{en:conformal}) an application of Parts (\ref{curvatureresult:average}) and (\ref{curvatureresult:nonlattice}) of \cref{curvatureresult} to $F$ and of \cref{similars} to $K$ proves \cref{corimage}.
\end{proof}

\begin{proof}[Proof of \cref{corminim}]
	Parts (\ref{minim:average}) and (\ref{minim:nonlattice}) of \cref{corminim} are immediate consequences of \cref{corimage}. 
	Part (\ref{minim:lattice}) of \cref{corminim} is going to be deduced from Part (\ref{conformalMinkowski:lattice}) of \cref{conformalMinkowski}. We let $\bij_K$ and $\bij_F$ respectively denote the natural code maps from $\Sigma^{\infty}$ to $K$ and $F$ and observe that $\bij_K=g_n^{-1}\circ\bij_F$.
  Further, we let $\xi_{n}$ denote the geometric potential function associated with $F_n$. By Property (\ref{en:geopot}) we see that $\xi_{n}-\xi_K=\psi-\psi\circ\sigma$, where $\psi\defeq-\ln g'_n\circ \bij_K$.
  By definition we have that $g'_n(x)=\left(\tilde{g}(x)(\ee^{\mdim\aaa n}-1)+1\right)^{-1/\mdim}$ for $x\in[-1,\infty)$. Thus, $\psi(\Sigma^{\infty})=-\ln g'_n\circ\bij_K(\Sigma^{\infty})\subseteq[0,\aaa n]$. We now show that Condition (\ref{existencecondition}) from \cref{conformalMinkowski} is satisfied. 
    \begin{eqnarray*}
      &&\hspace{-0.5cm}\sum_{i=0}^n\ee^{-\mdim\aaa i}\nu_{-\mdim\z}\circ\psi^{-1}([\aaa i,\aaa i+t))
        =\sum_{i=0}^n\ee^{-\mdim\aaa i}\nu_K\circ \tilde{g}^{-1}\left(\left[\frac{\ee^{\mdim\aaa i}-1}{\ee^{\mdim\aaa n}-1},\frac{\ee^{\mdim\aaa i+\mdim t}-1}{\ee^{\mdim\aaa n}-1}\right)\right)\\
          &&\hspace{-0.5cm}\qquad =\sum_{i=0}^{n-1}\frac{\ee^{\mdim t}-1}{\ee^{\mdim\aaa n}-1}
          =\frac{\ee^{\mdim t}-1}{\ee^{\mdim\aaa}-1}\sum_{i=0}^{n}\ee^{-\mdim\aaa i}\nu_K\circ \tilde{g}^{-1}\left(\left[\frac{\ee^{\mdim\aaa i}-1}{\ee^{\mdim\aaa n}-1},\frac{\ee^{\mdim\aaa(i+1)}-1}{\ee^{\mdim\aaa n}-1}\right)\right)\\
            &&\hspace{-0.5cm}\qquad=\frac{\ee^{\mdim t}-1}{\ee^{\mdim\aaa}-1}\sum_{i=0}^{n}\ee^{-\mdim\aaa i}\nu_{-\mdim\z}\circ\psi^{-1}([\aaa i,\aaa(i+1)))
    \end{eqnarray*}
    holds for all $t\in[0,\aaa)$ which completes the proof.
\end{proof}
\addcontentsline{toc}{section}{References}
\providecommand{\bysame}{\leavevmode\hbox to3em{\hrulefill}\thinspace}
\providecommand{\MR}{\relax\ifhmode\unskip\space\fi MR }
\providecommand{\MRhref}[2]{%
  \href{http://www.ams.org/mathscinet-getitem?mr=#1}{#2}
}
\providecommand{\href}[2]{#2}

\end{document}